\documentclass[11pt]{article}
\usepackage{amsthm,amsfonts, amsbsy, amssymb,amsmath,graphicx}
\usepackage{graphics}
\usepackage[english]{babel}
\usepackage{graphicx}
\usepackage{lineno}
\usepackage{enumerate}
\usepackage{tikz}
\usepackage{tkz-graph}
\usepackage{tkz-berge}
\usepackage{hyperref}
\usepackage{xcolor}
\usepackage{color}
\usepackage{tikz}
\usetikzlibrary{arrows, automata}
\usepackage{chngcntr}
\counterwithout{figure}{section}

\hypersetup{colorlinks=true}

\hypersetup{colorlinks=true, linkcolor=blue, citecolor=blue,urlcolor=blue}

\title{Edge Coalitions in Graphs}
\author{
{\small Nazli Besharati$^1$\thanks{Corresponding author}\ , Azam Sadat Emadi $^2$, Iman Masoumi$^3$ }\\
{\small $^{1}$Department of Mathematics, Faculty of Sciences}\\
{\small Payame Noor University, Tehran, Iran}\\ {\small $^1$nbesharati@pnu.ac.ir} \\
{\small $^{2}$Department of Mathematics, Faculty of Mathematical Sciences}
\\{\small University of Mazandaran, Babolsar, Iran}\\
{\small $^2$mathemadi2025@gmail.com} \\
{\small $^{3}$ Department of Mathematics, Faculty of Mathematical Sciences}
\\{\small University of Tafresh, Tafresh, Iran}\\{\small $^3$ i.masoumi1359@gmail.com}
}

\addtolength{\hoffset}{-1.7cm}
\addtolength{\textwidth}{3.5cm}
\addtolength{\voffset}{-1.7cm} \addtolength{\textheight}{2cm}

\newtheorem{theorem}{Theorem}[section]
\newtheorem{corollary}[theorem]{Corollary}

\newtheorem{example}[theorem]{Example}
\newtheorem{observation}[theorem]{Observation}
\newtheorem{proposition}[theorem]{Proposition}

\theoremstyle{definition}
\newtheorem{definition}[theorem]{Definition}
\theoremstyle{remark}

\begin{document}

\maketitle
\begin{abstract}
\noindent
Coalition concepts have been extensively studied in domination theory for vertex sets, whereas their edge counterparts have remained largely unexplored. Motivated by this, we introduce the notions of edge coalition, edge coalition partition, edge coalition number, and edge coalition graph.

We prove that every graph admits an edge coalition partition and establish fundamental properties of these concepts. We derive sharp bounds for the edge coalition number, characterize graphs attaining its extremal values, and determine this parameter for several important graph classes, including complete graphs, complete bipartite graphs, paths, cycles, stars, trees, and unicyclic graphs.

We further introduce the edge coalition graph associated with an edge coalition partition and investigate its structural properties. In particular, we characterize the edge coalition graphs of several graph classes and identify all self-edge coalition graphs. These results extend coalition theory from vertices to edges and provide a foundation for further research on edge coalition structures.
\end{abstract}
 {\bf Keywords:} Edge coalition,  edge coalition partition, dominating sets, edge dominating sets.\vspace{1mm}\\
{\bf MSC 2010:} 05C69.
\section{Introduction}
Community issues are often too complex to be addressed by a single agency or organization, making coalitions an effective strategy for achieving common objectives. Such cooperation arises in education, business, government, and coalition voting under proportional representation systems. Motivated by these applications, coalition structures have attracted considerable attention in graph theory.
Throughout this paper, we restrict our attention to coalitions formed by two distinct groups.

Domination is a central topic in graph theory and has been extensively studied. In \cite{hhhmm1}, Haynes \emph{et al.} introduced the concept of coalition for vertices and further investigated its properties in \cite{hhhmm2,hhhmm3,hhhmm4}. A coalition consists of two disjoint vertex sets, neither of which is a dominating set individually, while their union forms a dominating set. They also introduced coalition partitions and established several fundamental results. More recently, coalition theory has been extended through the introduction of double coalitions in general and regular graphs \cite{HenningMojdeh2025DCG,HenningMojdeh2025DCR}.

Edge domination is a natural counterpart of vertex domination and has also been extensively investigated as one of the fundamental edge-based optimization problems in graph theory \cite{arumugam,Baste2020}. However, coalition concepts have been studied only for vertex sets. This naturally raises the following question: can the notion of coalition be extended from vertices to edges? In this paper, we answer this question in the affirmative. To the best of our knowledge, edge coalition partitions and the associated edge coalition graphs have not previously been studied.

Unlike vertex coalitions, edge coalitions are not a straightforward extension, since adjacency between edges differs fundamentally from adjacency between vertices. Consequently, edge coalition partitions exhibit structural properties that do not follow directly from the corresponding vertex theory.

Motivated by these observations, we introduce the concepts of edge coalition, edge coalition partition, edge coalition number, and edge coalition graph, and investigate their fundamental properties.

Several edge-set partitions have been studied in graph theory and computer science, including proper edge colorings, edge-domatic partitions, and matching partitions~\cite{s}. The concept introduced here provides another natural partition arising from edge domination and coalition structures.

The main contributions of this paper are as follows. We introduce the concepts of edge coalition, edge coalition partition, edge coalition number, and edge coalition graph; prove that every graph admits an edge coalition partition; establish sharp bounds for the edge coalition number; characterize graphs attaining its extremal values; and determine the edge coalition graphs associated with edge coalition partitions for several important graph classes.

The remainder of the paper is organized as follows. Section~2 presents the notation and preliminary results. Section~3 develops the basic theory of edge coalitions. Sections~4 and~5 establish bounds and characterizations of the edge coalition number. Section~6 investigates edge coalition graphs of several graph classes. Section~7 concludes the paper with open problems and future research directions.
%
\section{Preliminaries}

In this section, we recall the notation and terminology used throughout the paper. For undefined notation and terminology, we refer the reader to \cite{Bondy,west}.

The edge degree of an edge $e=uv$, denoted by $deg(e)$, is the number of neighbors of the vertices $u$ and $v$. Equivalently,
$deg(e)=|N(u)\cup N(v)|-2,$  where $N(u)$ and $N(v)$ denote the neighborhoods of $u$ and $v$, respectively. The open neighborhood of an edge $e$ is  $N(e)=\{f\in E(G): f \text{ is adjacent to } e\}.$
Each edge in $N(e)$ is called a neighbor of $e$; thus $deg(e)=|N(e)|$.

Let $G$ be a graph of size $m$. An edge $e$ is called a \emph{full edge} if $deg(e)=m-1$, that is, if $e$ is adjacent to every other edge of $G$. For example, every edge of a star is a full edge.

Let $G=(V,E)$ be a graph. A subset $E_i\subseteq E$ is called a \emph{singleton set} if $|E_i|=1$; otherwise, if $|E_i|\ge2$, it is called a \emph{non-singleton set}.

A set $S\subseteq V(G)$ is a dominating set if every vertex in $V(G)\setminus S$ is adjacent to a vertex of $S$. Similarly, an edge-dominating set is a subset $D\subseteq E(G)$ such that every edge in $E(G)\setminus D$ is adjacent to an edge of $D$. The minimum cardinality of an edge-dominating set is called the \emph{edge domination number} of $G$ and is denoted by $\gamma'(G)$. An edge-dominating set $D$ is \emph{minimal} if no proper subset of $D$ is an edge-dominating set.

An edge-domatic partition of a graph $G$ is a partition of $E(G)$ into edge-dominating sets. For further details, see \cite{arumugam,z,Y}.

The line graph of a graph $G$, denoted by $L(G)$, is the graph whose vertices correspond to the edges of $G$, where two vertices are adjacent if and only if the corresponding edges are adjacent \cite{west}. It is well known that every edge-dominating set of $G$ corresponds to a dominating set of $L(G)$; hence, $\gamma'(G)=\gamma(L(G)).$
It follows immediately that every full edge forms a singleton edge-dominating set.

A unicyclic graph is a connected graph containing exactly one cycle and therefore has the same number of vertices and edges. Examples include paw, pan, and sunlet graphs.

The double star $S(p,q)$, where $p\ge q\ge0$, is the graph obtained from two stars $K_{1,p}$ and $K_{1,q}$ by joining their centers with an edge.
\section{Edge Coalitions}

In this section, we introduce the concept of edge coalitions and investigate their fundamental properties.

\begin{definition}
\label{def1}
Let $G=(V,E)$ be a graph. An edge coalition in $G$ is defined as two disjoint sets of edges $E_1$ and $E_2$, where neither $E_1$ nor $E_2$ is an edge-dominating set, but their union $E_1\cup E_2$ forms an edge-dominating set. We say that $E_1$ and $E_2$ form an edge coalition and refer to them as edge coalition partners.
\end{definition}

\begin{definition}
\label{def2}
An edge coalition partition, henceforth called an $ec$-partition, of a graph $G$ is an edge partition
$\pi=\{E_1,\ldots,E_k\}$ such that every set $E_i$ of $\pi$ is either a singleton edge-dominating set, or is not an edge-dominating set but forms an edge coalition with another set $E_j$ in $\pi$. The edge coalition number, denoted by $EC(G)$, is the maximum order $k$ of an $ec$-partition of $G$, provided that such a partition exists. An $ec$-partition of order $EC(G)$ is called an $EC(G)$-partition.
\end{definition}

Note that if $G$ has no full edges, then no subset $E_i$ in an $ec$-partition can be an edge-dominating set. Consequently, every subset $E_i$ must form an edge coalition with another subset $E_j$ in the partition.

To illustrate these concepts, consider the path  $P_6=(e_1,e_2,e_3,e_4,e_5)$. 
The partition $\pi=\{\{e_1,e_5\}, \{e_2\},\{e_3\}, \{e_4\}\}$ is an $ec$-partition of $P_6$. No set of $\pi$ is an edge dominating set, 
but  the subset  $\{e_1,e_5\}$ and $\{e_2\}$ form an edge coalition,  
the subset $\{e_3\}$ and $\{e_1,e_5\}$ form an edge coalition, and 
the subset $\{e_4\}$ and $\{e_1,e_5\}$ form an edge coalition. 
 Therefore, each set establishes an edge coalition with at least one other set. To show that this is also an $EC(P_6)$-partition, we
note that any larger partition of $E(P_6)$ would necessarily be of order $5$ with each edge residing in a
singleton set, namely, the singleton edge partition of $P_6$.
 Indeed, no two-edge set containing the middle edge $e_3$ forms an edge-dominating set of $P_6$. Hence, the singleton edge partition cannot be an $ec$-partition.
Therefore, $EC(P_6)=4$ and $\pi$ is an $EC(P_6)$-partition.\\
The following example shows that, unlike paths, the singleton edge partition of a graph may itself be an $EC(G)$-partition.
Next, consider the cycle $C_5=(e_1,e_2,e_3,e_4,e_5,e_1)$. 
It is straightforward to observe that the singleton edge partition 
$\pi_1$ of $C_5$ constitutes an $EC(C_5)$-partition. Consequently, the edge coalition number of $C_5$ is given by $EC(C_5)=5$.
\begin{definition}
\label{def3}
Let $G$ be a graph of size $m$ with  an edge set $E=\{e_1, e_2, \ldots, e_m\}$. The
singleton partition, denoted $\pi_1$, of $G$ is the partition of $E$ into $m$ singleton sets, that
is, $\pi_1 = \{\{e_1\}, \{e_2\},\ldots , \{e_m\}\}$.
\end{definition}
\begin{definition}
\label{def4}
Let $\pi=\{E_1,E_2,\ldots,E_k\}$ be an $ec$-partition of a graph $G=(V,E)$. The \emph{edge coalition graph}, denoted by $ECG(G,\pi)$, is the graph whose vertices are in one-to-one correspondence with the sets of $\pi$. Two vertices of $ECG(G,\pi)$ are adjacent if and only if their corresponding sets $E_i$ and $E_j$ form an edge coalition in $G$.
\end{definition}

Note that, in Definition \ref{def4}, we make a slight notational adjustment by allowing $E_i$ to represent both a set in $\pi$ and a vertex in $ECG(G,\pi)$. For clarity and consistency, we maintain this convention throughout the paper, relying on the context to distinguish between the two interpretations.

Moreover, for any graph $G$ and any $EC(G)$-partition $\pi$, there exists a corresponding edge coalition graph $ECG(G,\pi)$ with $EC(G)$ vertices.

\section{Preliminary Results}

In this section, we determine the edge coalition numbers for stars, double stars, paths, and cycles.

Since a coalition partition exists for a graph $H$, and similarly, a coalition partition exists for the line graph $L(G)$, it follows that an edge coalition partition exists for any graph $G$. Therefore, we conclude the following.

\begin{observation}
\label{the-exist}
Every graph $G$ has an $ec$-partition.
\end{observation}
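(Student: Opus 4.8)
The plan is to prove that every graph $G$ admits an $ec$-partition by leveraging the tight correspondence between edge domination in $G$ and vertex domination in the line graph $L(G)$, which the excerpt has already recorded: any edge-dominating set of $G$ is a vertex-dominating set of $L(G)$, and $\gamma'(G)=\gamma(L(G))$. Since Haynes et al.\ established that \emph{every} graph possesses a (vertex) coalition partition, the natural strategy is to transport a coalition partition of $L(G)$ back across the bijection between $E(G)=V(L(G))$ to obtain an $ec$-partition of $G$.

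First I would set up the bijection explicitly: label the vertices of $L(G)$ by the edges of $G$, so that a set $S\subseteq V(L(G))$ corresponds to a set $E_S\subseteq E(G)$ of the same cardinality. The key observation to verify is that this correspondence respects the relevant domination notion in both directions, i.e.\ that $S$ is a dominating set of $L(G)$ if and only if $E_S$ is an edge-dominating set of $G$. The forward direction is exactly what the excerpt quotes; I would check the converse by unwinding the definitions, using that $e$ and $f$ are adjacent as edges of $G$ precisely when they are adjacent as vertices of $L(G)$. Granting this equivalence, the \emph{coalition} relation is preserved verbatim: $S_1,S_2$ are coalition partners in $L(G)$ (neither dominates, union dominates) if and only if $E_{S_1},E_{S_2}$ are edge-coalition partners in $G$.

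Next I would take a coalition partition $\pi'=\{S_1,\dots,S_k\}$ of $L(G)$, which exists by the Haynes et al.\ result, and map it to the partition $\pi=\{E_{S_1},\dots,E_{S_k}\}$ of $E(G)$. Because the bijection is on the underlying ground set and the sets $S_i$ are pairwise disjoint and cover $V(L(G))$, the images $E_{S_i}$ are pairwise disjoint and cover $E(G)$, so $\pi$ is indeed an edge partition. Finally, the defining condition of a coalition partition transfers set-by-set: each $S_i$ is either a dominating set that is a single vertex of degree $|V(L(G))|-1$, or forms a coalition with some $S_j$; under the bijection a singleton dominating vertex of full degree in $L(G)$ corresponds exactly to a singleton full edge of $G$ (a singleton edge-dominating set in the sense of Definition~\ref{def2}), and coalition partners map to edge-coalition partners. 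Hence $\pi$ satisfies Definition~\ref{def2} and is an $ec$-partition of $G$.

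The main obstacle I anticipate is not the transport argument itself but the careful bookkeeping of the edge cases in the ``singleton'' clause of the definition. One must confirm that a vertex of degree $|V(L(G))|-1$ in $L(G)$ corresponds precisely to a \emph{full edge} of $G$ (degree $m-1$), so that a single-vertex dominating set in the coalition partition of $L(G)$ becomes a singleton edge-dominating set in the $ec$-partition of $G$, matching the alternative permitted in Definition~\ref{def2}; this hinges on the degree correspondence between an edge $e$ in $G$ and its image in $L(G)$. Beyond this matching of the degenerate case, everything reduces to the already-quoted identity $\gamma'(G)=\gamma(L(G))$ together with the preservation of adjacency, so the result follows.
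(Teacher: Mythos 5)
Your proposal is correct and follows exactly the route the paper takes: the paper's own proof is the one-line observation that a coalition partition of the line graph $L(G)$ exists by the result of Haynes et al.\ and transports back to an $ec$-partition of $G$ via the identification of $E(G)$ with $V(L(G))$. Your version simply fills in the details the paper leaves implicit, including the correct matching of the singleton clause (a dominating vertex of degree $m-1$ in $L(G)$ corresponds to a full edge of $G$).
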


The following values follow immediately from the corresponding results on coalition numbers of line graphs together with the identities.
Since $L(K_{1,n})=K_n$, $L(P_n)=P_{n-1}$, and $L(C_n)=C_n$. Additionally, the line graph $L(S_{p,q})$ is obtained from two complete graphs, $K_p$ and $K_q$, sharing a common vertex. Thus, we have the following observation.

\begin{observation}
\label{obs11}
For any star $K_{1,n}$,
\[
EC(K_{1,n})=n.
\]

For any path $P_n$, Theorem 7 of \cite{hhhmm1} implies that   
\begin{equation*}
EC(P_n)=
\begin{cases}
n-1,& \text{if } n\le5,\\
4,& \text{if } n=6,\\
5,& \text{if } 7\le n\le10,\\
6,& \text{if } n\ge11.
\end{cases}
\end{equation*}

For any cycle $C_n$,  Theorem 8 of \cite{hhhmm1} implies that
\begin{equation*}
EC(C_n)=
\begin{cases}
n,& \text{if } n\le6,\\
5,& \text{if } n=7,\\
6,& \text{if } n\ge8.
\end{cases}
\end{equation*}

For any double star $S_{p,q}$,
\[EC(S_{p,q})=p+q+1.\]
\end{observation}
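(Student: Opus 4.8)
The plan is to reduce every case to the known vertex-coalition results of Haynes et al.\ through the line-graph correspondence. The starting point is the fact, already recorded in the preliminaries, that a set $D\subseteq E(G)$ is an edge-dominating set of $G$ if and only if the corresponding vertex set is a dominating set of $L(G)$, and that a single edge $e$ is full in $G$ precisely when its image is a vertex of degree $|E(G)|-1$ in $L(G)$. I would first upgrade this into the identity $EC(G)=C(L(G))$, where $C$ denotes the (vertex) coalition number: an edge partition $\pi=\{E_1,\dots,E_k\}$ of $G$ maps bijectively to a vertex partition of $L(G)$, each $E_i$ is a (singleton) edge-dominating set exactly when its image is a (singleton full-vertex) dominating set, and $\{E_i,E_j\}$ forms an edge coalition exactly when the images form a vertex coalition. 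Hence $\pi$ is an $ec$-partition of $G$ iff its image is a coalition partition of $L(G)$, and taking maxima gives $EC(G)=C(L(G))$.

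With this identity in hand, the star, cycle, and path cases follow by substitution. For the star, $L(K_{1,n})=K_n$ and every vertex of $K_n$ is full, so the singleton partition is a coalition partition and $C(K_n)=n$; thus $EC(K_{1,n})=n$ (equivalently, every edge of $K_{1,n}$ is a full edge). For the cycle, $L(C_n)=C_n$, so $EC(C_n)=C(C_n)$ and the stated formula is exactly Theorem 8 of \cite{hhhmm1}. The path requires the index shift $L(P_n)=P_{n-1}$: then $EC(P_n)=C(P_{n-1})$, and substituting the path coalition formula of Theorem 7 of \cite{hhhmm1} for $P_{n-1}$ reproduces the four-case expression above, with the break at $n=6$ arising from the break of $C(P_m)$ at $m=5$, and so on.

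For the double star I would argue directly on the singleton partition $\pi_1$, since its line graph, namely two cliques sharing the vertex corresponding to the central edge, is not a standard named graph with a tabulated coalition number. The size of $S_{p,q}$ is $p+q+1$, so $EC(S_{p,q})\le p+q+1$ with equality iff $\pi_1$ is an $ec$-partition, and it therefore suffices to verify the latter. The central edge is adjacent to all $p+q$ leaf edges, so it is a full edge and hence a singleton edge-dominating set. A single leaf edge incident to one center fails to dominate any leaf edge at the other center, so it is not edge-dominating; but a leaf edge at the first center together with any leaf edge at the second center dominates every edge of $S_{p,q}$, so the two singletons form an edge coalition. Thus in $\pi_1$ every set is either a singleton edge-dominating set (the central edge) or pairs across the two centers into an edge coalition, which makes $\pi_1$ an $ec$-partition and yields $EC(S_{p,q})=p+q+1$.

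The main obstacle is not any individual case but pinning down the correspondence $EC(G)=C(L(G))$ cleanly enough that the singleton and full-vertex bookkeeping matches on both sides; once that identity is secured, the star, path, and cycle values are immediate citations and only the double star needs a short self-contained check. I would also attend to the degenerate small cases, in particular $q=0$, where the cross-center pairing of leaf edges breaks down and $S_{p,q}$ collapses to a star, handling these separately or excluding them under the standing assumption $p\ge q\ge 1$.
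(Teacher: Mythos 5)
Your proposal is correct and follows essentially the same route as the paper, which likewise derives all four cases from the line-graph identities $L(K_{1,n})=K_n$, $L(P_n)=P_{n-1}$, $L(C_n)=C_n$, and $L(S_{p,q})$ being two cliques sharing a vertex, together with Theorems 7 and 8 of \cite{hhhmm1}. Your explicit statement of the correspondence $EC(G)=C(L(G))$ with the singleton/full-edge bookkeeping, and your direct verification of the double-star case, are in fact more careful than the paper's one-line justification.
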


\begin{example}
Consider the path
$P_{13}=(e_1,e_2,e_3,e_4,e_5,e_6,e_7,e_8,e_9,e_{10},e_{11},e_{12})$
for which we define the partition
$\pi=\{E_1=\{e_1,e_3,e_9,e_{11}\},E_2=\{e_2,e_4,e_8,e_{10},e_{12}\},E_3=\{e_5\},E_4=\{e_6\},E_5=\{e_7\}\}$.

Assign labels to the edges according to the labeling sequence
$(1,2,1,2,3,4,5,2,1,2,1,2)$, where an edge labeled $i$ belongs to the set  $E_i$.
To see that $\pi$ is an $ec$-partition of $P_{13}$, observe that none of the sets in $\pi$ is an edge-dominating set.
 However, the pairs  $(E1,E4)$, $(E2,E3)$, and $(E2,E5)$  form edge coalitions. Hence, every set in $\pi$ forms an edge coalition with another set in the partition.
\end{example}
Since the maximum degree of the line graph satisfies
$\Delta(L(G))\le2\Delta(G)-2$, it follows from Theorem 4 of  \cite{hhhmm1} implies the following observation.

\begin{observation}
\label{obs-del-col}
Let $G$ be a graph with maximum degree $\Delta(G)\ge2$, and let $\pi$ be an $EC(G)$-partition. If $X\in\pi$, then $X$ has at most $2\Delta(G)-1$ edge coalition partners.
\end{observation}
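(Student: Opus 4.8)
The plan is to transport the problem to the line graph $L(G)$ and then invoke the known vertex-coalition bound. As recorded in the preliminaries, a subset of $E(G)$ is an edge dominating set of $G$ precisely when the corresponding subset of $V(L(G))$ is a (vertex) dominating set of $L(G)$, and $\gamma'(G)=\gamma(L(G))$. Consequently, the $EC(G)$-partition $\pi$, viewed as a partition of $V(L(G))$, is a vertex coalition partition of $L(G)$: a part that is a singleton edge dominating set of $G$ becomes a singleton dominating set of $L(G)$ (a vertex of degree $|V(L(G))|-1$, i.e.\ a full edge of $G$), and two parts form an edge coalition in $G$ exactly when the corresponding vertex-sets form a coalition in $L(G)$. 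In particular, the edge coalition partners of $X$ in $G$ are exactly the coalition partners of the vertex-set $X$ in $L(G)$.

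First I would invoke Theorem 4 of \cite{hhhmm1} applied to the host graph $H=L(G)$: in any coalition partition of $H$, every part forms a coalition with at most $\Delta(H)+1$ other parts. Applying this to $X$ inside the coalition partition $\pi$ of $L(G)$ shows that $X$ has at most $\Delta(L(G))+1$ coalition partners.

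Next I would bound $\Delta(L(G))$. For an edge $e=uv$ of $G$, the edges adjacent to $e$ are the edges incident with $u$ other than $e$ together with those incident with $v$ other than $e$, and these two families are disjoint; hence $\deg(e)=(\deg_G(u)-1)+(\deg_G(v)-1)\le 2\Delta(G)-2$, so $\Delta(L(G))\le 2\Delta(G)-2$. Combining with the previous step gives at most $(2\Delta(G)-2)+1=2\Delta(G)-1$ edge coalition partners for $X$. The hypothesis $\Delta(G)\ge 2$ is used only to guarantee that $L(G)$ has at least one edge, so that coalitions (and hence Theorem 4) are meaningful.

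I do not expect a serious obstacle: the one point requiring care is that the translation between $\pi$ and a coalition partition of $L(G)$ respects the two admissible roles of a part (singleton dominating set versus coalition partner), which is immediate from the fact that singleton edge dominating sets of $G$ correspond to universal vertices of $L(G)$. As a self-contained alternative that avoids the line graph, one can argue directly in $G$: a singleton edge dominating set has no partners and the bound is trivial, so assume $X$ is not an edge dominating set and fix an edge $e$ that $X$ fails to dominate; every coalition partner $Y$ must dominate $e$, hence $Y$ meets $\{e\}\cup N(e)$, and since $X\cap(\{e\}\cup N(e))=\emptyset$ while the parts of $\pi$ are pairwise disjoint, the number of partners is at most $|\{e\}\cup N(e)|=\deg(e)+1\le 2\Delta(G)-1$.
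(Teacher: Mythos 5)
Your main argument is exactly the paper's: the observation is presented as an immediate consequence of $\Delta(L(G))\le 2\Delta(G)-2$ together with Theorem 4 of \cite{hhhmm1} applied to the line graph, which is precisely the reduction you carry out. Your self-contained alternative (fixing an edge $e$ undominated by $X$, noting that every coalition partner must intersect $N[e]$, a set of size $\deg(e)+1\le 2\Delta(G)-1$ disjoint from $X$, and using the pairwise disjointness of the parts of $\pi$) is also correct and is simply the proof of the cited theorem transported to edges.
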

As an immediate consequence of the proof of Observation \ref{obs-del-col}, we deduce the following result.

\begin{corollary}
\label{cor6}
For any connected graph $G=(V,E)$ of order $n\ge2$ and size $m$,
 the edge coalition number satisfies the inequality
\[
1\leq EC(G)\leq m\leq \frac{n(n-1)}{2}.
\]
\end{corollary}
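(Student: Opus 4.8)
The plan is to prove each of the four inequalities $1\leq EC(G)\leq m\leq \frac{n(n-1)}{2}$ separately, treating the two nontrivial ones with care. The lower bound $1 \leq EC(G)$ is immediate from Observation~\ref{the-exist}: every graph admits an $ec$-partition, so $EC(G)$ is well-defined and is at least $1$ (indeed, the trivial partition placing all edges in a single block yields a block that is itself an edge-dominating set, hence an admissible $ec$-partition of order $1$). I would state this in one sentence.

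For the middle inequality $EC(G)\leq m$, the key observation is that any $ec$-partition $\pi=\{E_1,\ldots,E_k\}$ is by Definition~\ref{def2} a partition of the edge set $E$ into \emph{nonempty} blocks, since each $E_i$ must either be a singleton edge-dominating set or form an edge coalition (and the empty set is neither). A partition of an $m$-element set into nonempty parts has at most $m$ parts, with equality exactly when $\pi$ is the singleton partition $\pi_1$. Taking the maximum over all $ec$-partitions gives $EC(G)\leq m$, and the extremal case $EC(G)=m$ is realized precisely when $\pi_1$ is itself an $ec$-partition (as occurred for $C_5$ in the examples above).

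The final inequality $m\leq \frac{n(n-1)}{2}$ is the standard bound on the number of edges of a simple graph on $n$ vertices: any simple graph has at most $\binom{n}{2}$ edges, attained by the complete graph $K_n$. Since the paper works with simple graphs (line graphs are defined, and multi-edges would make $\deg(e)=|N(e)|$ ambiguous), I would simply cite this elementary fact. Chaining the three inequalities together yields the full statement.

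I do not anticipate a serious obstacle here, as this corollary is essentially bookkeeping built on Observation~\ref{the-exist} and the definition of a partition; the phrase ``as an immediate consequence of the proof of Observation~\ref{obs-del-col}'' in the lead-in is slightly misleading, since the substance of the middle bound really comes from the partition structure of Definition~\ref{def2} rather than from the degree-based coalition-partner count. The only point deserving a sentence of justification is why each block $E_i$ is nonempty, which I would make explicit to rule out the degenerate reading that would otherwise allow arbitrarily many empty parts.
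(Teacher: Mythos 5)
Your argument is correct in substance and is essentially the only proof available: the paper itself offers no written proof of Corollary~\ref{cor6}, merely asserting it follows from the proof of Observation~\ref{obs-del-col}, and as you rightly note the real content of the middle bound $EC(G)\leq m$ is just that an $ec$-partition is a partition of $E$ into nonempty blocks, while $1\leq EC(G)$ comes from Observation~\ref{the-exist} and $m\leq\frac{n(n-1)}{2}$ is the standard edge count for simple graphs.

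One parenthetical remark in your write-up is wrong, though it does not affect the result: the one-block partition $\{E\}$ is generally \emph{not} an admissible $ec$-partition. By Definition~\ref{def2} each block must be either a \emph{singleton} edge-dominating set or a non-edge-dominating set that has a coalition partner; when $m\geq 2$ the full edge set $E$ is an edge-dominating set but is not a singleton, so it satisfies neither alternative. The lower bound $EC(G)\geq 1$ should rest solely on Observation~\ref{the-exist} (some $ec$-partition exists, and any partition has order at least $1$), which is exactly your primary justification; simply delete the ``indeed'' clause.
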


Now, we can characterize the graph $G$ for which
$EC(G)=\frac{n(n-1)}{2}$ holds.

\begin{proposition}
\label{prop1}
For any connected graph $G$ with $n\geq2$,
$EC(G)=\frac{n(n-1)}{2}$ if and only if $G$ is one of the complete graphs in the set
$\{K_2,K_3,K_4,K_5\}$.
\end{proposition}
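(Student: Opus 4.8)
The plan is to combine Corollary~\ref{cor6} with a precise understanding of when two edges jointly dominate $K_n$. First I would exploit the chain $EC(G)\le m\le \frac{n(n-1)}{2}$ from Corollary~\ref{cor6}: the equality $EC(G)=\frac{n(n-1)}{2}$ forces both $m=\frac{n(n-1)}{2}$ and $EC(G)=m$. The first equality means $G$ is the complete graph $K_n$, and the second means that the singleton partition $\pi_1$ of Definition~\ref{def3} is itself an $ec$-partition, since an $ec$-partition of order $m$ must place every edge in its own block. So the whole proposition reduces to deciding for which $n$ the singleton partition of $K_n$ is an $ec$-partition.

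The key reformulation is that a set $D$ of edges is edge-dominating in $G$ if and only if the set $V(D)$ of endpoints of edges in $D$ is a vertex cover of $G$, because an edge $xy$ is dominated exactly when $x$ or $y$ lies in $V(D)$. For $K_n$ a vertex cover needs at least $n-1$ vertices, so a single edge (spanning $2$ vertices) is edge-dominating precisely when $n\le 3$, and a pair of edges (spanning at most $4$ vertices) is edge-dominating precisely when $n-1\le 4$, that is $n\le 5$. This is the crux: two edges span at most four vertices, while $K_n$ demands an $(n-1)$-vertex cover.

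With this in hand I would split into cases. For $n\in\{2,3\}$ every single edge is already an edge-dominating set, so each block of $\pi_1$ is a singleton edge-dominating set and $\pi_1$ is an $ec$-partition. For $n=4$ no single edge dominates, but any two edges span at least $3=n-1$ vertices, so every pair $\{e\},\{f\}$ forms an edge coalition and $\pi_1$ works. For $n=5$ no single edge dominates and two edges span $4=n-1$ vertices only when they are disjoint; since in $K_5$ every edge has a disjoint partner (the three remaining vertices still span an edge), each singleton again finds a coalition partner and $\pi_1$ is an $ec$-partition. Hence $EC(K_n)=\frac{n(n-1)}{2}$ for $n\le 5$.

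Finally, for $n\ge 6$, no single edge dominates and no pair of edges can cover the required $n-1\ge 5$ vertices (two edges span at most $4$), so no singleton block can form an edge coalition with another singleton. Thus $\pi_1$ is not an $ec$-partition and $EC(K_n)<\frac{n(n-1)}{2}$, ruling out all $n\ge 6$. The main obstacle is isolating the vertex-cover reformulation together with the counting bound ``two edges span at most four vertices''; once that observation is in place, each individual case is immediate.
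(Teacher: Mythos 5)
Your proof is correct, and its overall architecture matches the paper's: both arguments use Corollary~\ref{cor6} to force $m=\frac{n(n-1)}{2}$ and $G\simeq K_n$, then reduce the whole question to whether the singleton partition of $K_n$ is an $ec$-partition, i.e., to whether one or two edges can edge-dominate $K_n$. Where you genuinely differ is in the accounting device for that last step. The paper counts the number of edges dominated by a pair of edges, namely at most $4n-10$, and compares this with $\binom{n}{2}$; you instead pass to the vertex-cover reformulation ($D$ is edge-dominating iff $V(D)$ is a vertex cover) and compare the at most four vertices spanned by two edges with the vertex cover number $n-1$ of $K_n$. The two computations are equivalent, since $4n-10=\binom{n}{2}-\binom{n-4}{2}$, but your version gives the threshold $n\le 5$ directly and supports a cleaner, fully explicit case analysis: it isolates $n\in\{2,3\}$ (singletons already dominate), $n=4$ (any two edges cover three vertices, enough), and $n=5$ (only disjoint pairs suffice, but every edge of $K_5$ has a disjoint partner), whereas the paper leaves $n=4,5$ implicit and its inequality analysis (``$4n-10<\frac{n(n-1)}{2}$ iff $n\ge 6$ or $n\le 3$'') conflates the regime where singletons dominate with the regime where nothing works. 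Your route is the more transparent of the two; the paper's buys nothing extra here beyond brevity.
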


\begin{proof}

Clearly, the singleton edge partition of the complete graph $K_n$
for $n=2,3,4,5$ is an $EC(K_n)$-partition,  implying that   $EC(K_n)=\frac{n(n-1)}{2}$.

Conversely, if $EC(G)=\frac{n(n-1)}{2}$, then $G$ has the singleton edge partition.
Moreover, since the maximum number of edges in any connected graph of order $n$ is
$\frac{n(n-1)}{2}$, we conclude that  $G\simeq K_n$.

Now we prove that  $2\leq n\leq5$. Let $e_1$ and $e_2$ be two edges.
Then the set $\{e_1,e_2\}$ dominates at most $4n-10$ edges.
On the other hand, it is easy to see that the inequality $4n-10<\frac{n(n-1)}{2}$  holds if and only if  $n\geq6$ or $n\leq3$.
Thus, if   $n\leq3$, the singleton sets are edge-dominating sets.
For $n\geq6$, $G$ does not have  the singleton edge partition,
since  every set   $\{e_1,e_2\}$   does not dominate at least one  vertex of $K_n$.
\end{proof}

Note that the graph $K_2$ is the only graph that attains the lower bound in Corollary \ref{cor6}, whereas the complete graphs
$K_n$ for  $n\in\{3,4,5\}$  attain the upper bound.

\begin{theorem}[\cite{arumugam}]
\label{arumugam}
For any connected graph $G$ of even order $n$,
the edge domination number satisfies $\gamma'(G)=\frac{n}{2}$
if and only if  $G$ is isomorphic to  $K_n$  or  $K_{\frac{n}{2},\frac{n}{2}}$.
\end{theorem}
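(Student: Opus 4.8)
The plan is to translate the edge-domination statement into a statement about matchings, where the extremal structure is transparent, and then invoke a structural characterization of the resulting extremal graphs. First I would record the standard reduction that $\gamma'(G)$ equals the minimum size of a maximal matching (equivalently, of a minimal independent edge dominating set): every maximal matching is an edge dominating set, and conversely any edge dominating set can be converted, without increasing its size, into a matching that is still edge dominating and hence maximal. This is the Gavril--Yannakakis identity, and it sits naturally alongside the line-graph identity $\gamma'(G)=\gamma(L(G))$ recalled earlier. Since a matching in a graph of order $n$ has at most $n/2$ edges, and the minimum maximal matching is at most the maximum matching, we have $\gamma'(G)\le \frac{n}{2}$ always. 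Consequently $\gamma'(G)=\frac{n}{2}$ holds if and only if \emph{every} maximal matching of $G$ has size $\frac{n}{2}$, i.e. is perfect; these are precisely the connected \emph{randomly matchable} graphs. So the theorem reduces to: a connected graph of even order is randomly matchable if and only if it is $K_n$ or $K_{\frac{n}{2},\frac{n}{2}}$.

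For the ``if'' direction I would simply check both families directly. In $K_n$ any non-perfect matching leaves at least two unsaturated vertices, which are adjacent, so the matching is not maximal; hence every maximal matching is perfect and $\gamma'(K_n)=\frac{n}{2}$. In $K_{\frac{n}{2},\frac{n}{2}}$ a matching saturates equally many vertices in each part, so a non-perfect matching leaves an unsaturated vertex in each part, and these two are adjacent, again contradicting maximality; thus $\gamma'(K_{\frac{n}{2},\frac{n}{2}})=\frac{n}{2}$.

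The substance is the converse, which is Sumner's characterization of randomly matchable graphs, and this is where I expect the real work. The engine I would use is the reduction lemma: for every edge $uv$ of a randomly matchable graph $G$, the graph $G-u-v$ is again randomly matchable, because adjoining $uv$ to any maximal matching of $G-u-v$ yields a maximal matching of $G$, which must be perfect, forcing the original matching to have been perfect in $G-u-v$. Two consequences drive the argument. First, $G$ has minimum degree at least $2$ once $n\ge 4$: if some vertex $v$ had a unique neighbour $u$, then extending the edge $uw$ (for any other neighbour $w$ of $u$, which exists by connectedness) to a maximal matching would leave $v$ unsaturated, contradicting perfection. Second, feeding the lemma into an induction on $n$ lets me assume every component of $G-u-v$ is a $K_{2k}$ or a $K_{k,k}$.

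The main obstacle is then the bookkeeping that controls how these blocks reattach to $u$ and $v$ and forces global structure. I would organize the endgame by the dichotomy ``$G$ contains a triangle'' versus ``$G$ is triangle-free'': in the first case I aim to show that adjacency propagates until $G=K_n$, and in the second that $G$ is bipartite and balanced-complete, giving $K_{\frac{n}{2},\frac{n}{2}}$. Ruling out every intermediate configuration is the delicate point; for instance a diamond-type graph ($K_4$ minus an edge) admits the non-extendable single edge joining its two degree-$3$ vertices, hence fails to be randomly matchable, and the converse amounts to showing that all such obstructions are already excluded. It is precisely this case analysis that constitutes Sumner's theorem, cited here as \cite{arumugam}.
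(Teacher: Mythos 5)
This statement is not proved in the paper at all: it is imported verbatim from Arumugam--Velammal \cite{arumugam} and used as a black box, so there is no internal proof to match your argument against. Your route is the standard modern one and your reductions are all sound: the Yannakakis--Gavril identification of $\gamma'(G)$ with the minimum size of a maximal matching is correct; the inference that $\gamma'(G)\le n/2$ with equality precisely when every maximal matching is perfect (i.e.\ $G$ is randomly matchable) is correct, since the minimum maximal matching having $n/2$ edges forces all of them to; and the verification for $K_n$ and $K_{n/2,n/2}$ is complete. Your reduction lemma (that $G-u-v$ is randomly matchable for every edge $uv$, because appending $uv$ to a maximal matching of $G-u-v$ gives a maximal matching of $G$) and the minimum-degree observation are also correct and are indeed the engine of Sumner's induction. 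The one caveat is that the entire substance of the converse is still outsourced: you explicitly defer the terminal case analysis to Sumner's theorem on randomly matchable graphs, and your proposed triangle/triangle-free dichotomy is a plausible organizing principle rather than a carried-out argument. So what you have is a correct derivation of the theorem from two other classical citations rather than a self-contained proof; given that the paper itself treats the statement purely as a citation, this is a legitimate and arguably more transparent treatment, but you should be aware that the hard combinatorial work has been relocated, not eliminated.
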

\begin{proposition}
\label{prop111}
For any complete graph $K_n$ of even order,  $EC(K_n)\geq2(n-1)$.
\end{proposition}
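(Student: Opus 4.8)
The plan is to build an explicit $ec$-partition of $K_n$ with exactly $2(n-1)$ parts, which immediately gives $EC(K_n)\geq 2(n-1)$. The starting point is the classical $1$-factorization of $K_n$ for even $n$: the edge set decomposes into $n-1$ perfect matchings $M_1,M_2,\ldots,M_{n-1}$, each of size $n/2$. Since a perfect matching covers every vertex, every edge of $K_n$ meets some matching edge, so each $M_i$ is an edge-dominating set; this is consistent with $\gamma'(K_n)=n/2$ from Theorem \ref{arumugam}.

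Next I would argue that no proper nonempty subset of a matching $M_i$ dominates. A set $A$ of $|A|$ pairwise independent edges covers exactly $2|A|$ vertices, so if $A\subsetneq M_i$ then at least $n-2|A|\geq 2$ vertices remain uncovered, and the edge joining two such vertices is undominated. Equivalently, $|A|<n/2=\gamma'(K_n)$, so $A$ cannot be an edge-dominating set. Hence splitting $M_i$ into two nonempty parts always produces two non-dominating sets whose union is the dominating set $M_i$.

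With these facts in hand, for each $i$ write $M_i=A_i\cup B_i$ as a disjoint union of two nonempty subsets (possible because $|M_i|=n/2\geq 2$ when $n\geq 4$). By the previous step $A_i$ and $B_i$ are each non-dominating while $A_i\cup B_i=M_i$ is dominating, so $A_i$ and $B_i$ form an edge coalition. The collection $\pi=\{A_1,B_1,A_2,B_2,\ldots,A_{n-1},B_{n-1}\}$ is a partition of $E(K_n)$ into $2(n-1)$ nonempty sets in which every part forms an edge coalition with its matching partner; since $K_n$ has no full edge for $n\geq 4$, this is precisely what the definition of an $ec$-partition demands. Therefore $EC(K_n)\geq 2(n-1)$.

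I do not expect a serious obstacle, as the construction is essentially forced once the $1$-factorization is invoked. The only points requiring care are verifying that proper subsets of a matching fail to dominate (handled by the uncovered-vertex count, or directly by $\gamma'(K_n)=n/2$) and checking the small case: note that $n=2$ must be excluded, since $K_2$ has a single edge and $EC(K_2)=1<2$, so the statement is intended for even order $n\geq 4$.
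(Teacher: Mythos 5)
Your proof is correct and follows essentially the same route as the paper: decompose $E(K_n)$ into the $n-1$ perfect matchings of a $1$-factorization (each a minimum edge-dominating set, consistent with Theorem~\ref{arumugam}) and split each matching into two nonempty parts, which are automatically non-dominating and so form a coalition. You are in fact more careful than the paper's own argument, which only counts $\frac{n(n-1)/2}{n/2}=n-1$ sets without exhibiting the decomposition, never verifies that the halves fail to dominate, and overlooks that $n\geq 4$ is needed (the stated bound fails for $K_2$).
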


\begin{proof}
Suppose that the vertices of $K_n$ are denoted by  $a_1,a_2,\ldots,a_n$.
According to Theorem \ref{arumugam},  $\gamma'(K_n)=\frac{n}{2}$. 

Moreover, the number of such sets is $\frac{\frac{n(n-1)}{2}}{\frac{n}{2}}=n-1$. 

So, there exist $n-1$ edge-dominating sets of $K_n$.

By dividing each of them into two subsets, an $ec$-partition of $K_n$ can be constructed.
Therefore, $EC(K_n)\geq2(n-1).$  This bound is sharp for $K_4$.
\end{proof}

\begin{proposition}
\label{prop2}
For the complete bipartite graph $K_{r,s}$ with $2\leq r\leq s$, the edge coalition number satisfies the inequality
$EC(K_{r,s})\geq2s$.
\end{proposition}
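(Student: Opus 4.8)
The plan is to exhibit an explicit $ec$-partition of order $2s$, mirroring the edge-domatic splitting idea used in Proposition \ref{prop111}. Write the bipartition of $K_{r,s}$ as $A=\{a_1,\dots,a_r\}$ and $B=\{b_1,\dots,b_s\}$, so that $|A|=r\le s=|B|$. For each vertex $b_j\in B$, let $S_j=\{a_1b_j,\dots,a_rb_j\}$ be the set of $r$ edges incident with $b_j$. The family $\{S_1,\dots,S_s\}$ clearly partitions $E(K_{r,s})$ into $s$ blocks, and I would first check that each $S_j$ is an edge-dominating set: any edge $a_kb_m$ with $m\ne j$ shares the vertex $a_k$ with $a_kb_j\in S_j$, and is therefore dominated.

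Next, since $r\ge 2$, I would split every block $S_j$ into two nonempty parts $S_j=S_j^{1}\cup S_j^{2}$ and claim that the two halves form an edge coalition. Their union $S_j$ is edge-dominating by the previous step, so it remains only to verify that neither half is itself edge-dominating. If $T\subsetneq S_j$ is a proper nonempty subset, then $T$ omits some edge $a_kb_j$; choosing any $b_m$ with $m\ne j$ (which exists because $s\ge 2$), the edge $a_kb_m$ shares no vertex with any edge $a_\ell b_j\in T$, since the $B$-endpoints differ and the $A$-endpoints differ as well (every such $\ell$ satisfies $\ell\ne k$, because $a_kb_j\notin T$). Hence $a_kb_m$ is undominated by $T$. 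Applying this to $T=S_j^{1}$ and to $T=S_j^{2}$ shows that both halves fail to be edge-dominating, and therefore $S_j^{1}$ and $S_j^{2}$ are edge coalition partners.

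Collecting these halves over all $j$ yields the partition $\pi=\{S_1^{1},S_1^{2},\dots,S_s^{1},S_s^{2}\}$ of order $2s$, in which every block is an edge coalition partner of its sibling; hence $\pi$ is an $ec$-partition and $EC(K_{r,s})\ge 2s$, as required. The point that genuinely needs care, and the step I expect to be the main obstacle, is the verification that both halves of each star are non-dominating; this is precisely where the hypotheses $r\ge 2$ (so that each star can be split into two nonempty parts) and $s\ge 2$ (so that an undominated edge $a_kb_m$ is available) are both used. Finally, it is worth noting that choosing the stars centered at the larger side $B$, rather than those centered at $A$, is exactly what upgrades the count from the immediate $2r$ to the claimed $2s$.
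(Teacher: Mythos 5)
Your proof is correct and follows essentially the same route as the paper: partition $E(K_{r,s})$ into the $s$ stars centered at the vertices of the larger side, note each star is edge-dominating, and split each into two nonempty non-dominating halves to obtain an $ec$-partition of order $2s$. Your explicit verification that every proper nonempty subset of a star $S_j$ fails to dominate some edge $a_kb_m$ is in fact slightly more careful than the paper's argument, which only remarks that singletons are not edge-dominating.
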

\begin{proof}
Suppose that the vertices in the partite set with $r$ elements and the vertices in the partite set with $s$ elements are denoted by $a_{1},a_{2},\cdots,a_{r}$ and $b_{1},b_{2},\cdots,b_{s}$, respectively.

Corresponding to each of these $r$ vertices, we have $s$ incident edges whose corresponding sets are
\[
\{a_{1}b_{1},a_{1}b_{2},\cdots,a_{1}b_{s}\},
\;
\{a_{2}b_{1},a_{2}b_{2},\cdots,a_{2}b_{s}\},
\;
\cdots,
\;
\{a_{r}b_{1},a_{r}b_{2},\cdots,a_{r}b_{s}\}.
\]
We consider
$A=\{X_1,X_2,\cdots,X_s\}$,
where
$X_i=\{a_{1}b_{i},a_{2}b_{i},\cdots,a_{r-1}b{i},a_{r}b{i}\}$.
It is easy to see that each  $X_i\in A$  is a minimal edge-dominating set of  $K_{r,s}$.
Clearly, the number of edge-dominating sets in $K_{r,s}$ is $r$.
Since each $X_i$ satisfies
$|X_i|=r$,  dividing each $X_i$ into two non-empty subsets yields an $ec$-partition of  $K_{r,s}$, such as $\pi=\{X_{1,1}, X_{1,2}, X_{2,1},X_{2,2},\cdots, X_{s,1}, X_{s,2} \}$ that for every $1\leq i \leq s$, $X_{i,1}=\{a_{1}b_{i}\}$ and $X_{i,2}=\{a_{2}b_{i},\cdots,a_{r-1}b_{i},a_{r}b_{i}\}$.  Since $r\ge2$, no singleton edge constitutes an edge-dominating set. Thus,  $EC(K_{r,s})\ge2s.$ 
This bound is sharp for $K_{2,2}$.
\end{proof}
\section{Bounds on $EC(G)$ and Graphs with Small $EC(G)$}

In this section, we establish bounds on the edge coalition number of a graph and characterize the graphs $G$ for which
$EC(G)\in\{1,2\}$.

\subsection{Bounds on $EC(G)$}

The proof of the following observation is straightforward; therefore, it is omitted.

\begin{observation}
\label{obs1}
If $G$ is a graph with no isolated edges and no full edges, then
\[3\le2\gamma'(G)-1\le EC(G).\]
\end{observation}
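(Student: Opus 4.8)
The plan is to prove the two inequalities separately. The left inequality $3\le 2\gamma'(G)-1$ is equivalent to $\gamma'(G)\ge 2$, and I would obtain it directly from the two hypotheses: a singleton $\{e\}$ is an edge-dominating set precisely when $e$ is adjacent to every other edge, that is, when $e$ is a full edge. Since $G$ has no full edges, no single edge dominates $E$, so $\gamma'(G)\ge 2$ and hence $2\gamma'(G)-1\ge 3$. The ``no isolated edge'' hypothesis plays a supporting role here: it guarantees $E\neq\emptyset$ and that every edge has a neighbour, so that $\gamma'(G)$ is well defined and the notion of a coalition is non-vacuous.

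For the right inequality I would argue constructively, exhibiting an $ec$-partition of order $2\gamma'(G)-1$, which by Definition~\ref{def2} forces $EC(G)\ge 2\gamma'(G)-1$. Write $\gamma=\gamma'(G)$ and fix a minimum edge-dominating set $D=\{f_1,\dots,f_\gamma\}$. Being minimum, $D$ is minimal, so for each $i$ the set $D\setminus\{f_i\}$ fails to dominate; thus every $f_i$ has a \emph{private edge} $p_i$ (equal to $f_i$ or adjacent to it) that only $f_i$ dominates within $D$. The target partition is intended to consist of the $\gamma$ singletons $\{f_1\},\dots,\{f_\gamma\}$ together with $\gamma-1$ further blocks formed by distributing the edges of $E\setminus D$. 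Each singleton $\{f_i\}$ is non-dominating because $G$ has no full edge, and the design goal is that $\{f_i\}$ pairs with a block that, together with $f_i$, recovers the domination supplied by all of $D$; symmetrically, each block should be non-dominating but pair with one of the singletons. Checking that neither member of a pair dominates on its own would again rely on the absence of full edges and on the minimality of $D$.

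The main obstacle is the bookkeeping that makes this distribution legitimate for every part \emph{simultaneously}. Splitting $E\setminus D$ into $\gamma-1$ non-empty blocks already requires $|E\setminus D|=m-\gamma\ge\gamma-1$, i.e.\ $m\ge 2\gamma-1$; a crude count — each $f_i$ has a neighbour outside the matching $D$, and each outside edge is adjacent to at most two edges of $D$ — only yields $m\ge\tfrac{3}{2}\gamma$, so even this cardinality bound demands that one exploit the ``no full edge'' hypothesis rather than mere adjacency. More seriously, the decisive step is not cardinality but \emph{partner existence}: one must certify that the $\gamma-1$ blocks can be chosen so that each singleton-plus-block union is an edge-dominating set and no part dominates alone, guaranteeing a coalition partner for all $2\gamma-1$ parts at once. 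This is not automatic even when the split is numerically possible, and I expect controlling the allocation of the private edges $p_i$ and the remaining edges of $E\setminus D$ to be the crux of the argument and the place where the hypotheses must be used most carefully.
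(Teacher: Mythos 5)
Your proof of the left inequality is complete and correct: a singleton $\{e\}$ is an edge-dominating set precisely when $e$ is a full edge, so the absence of full edges gives $\gamma'(G)\ge 2$, i.e.\ $2\gamma'(G)-1\ge 3$. For the right inequality, however, you have only sketched a plan, and you yourself flag the unfilled hole: you never show that $E\setminus D$ can be split into $\gamma'(G)-1$ non-empty blocks so that every one of the $2\gamma'(G)-1$ parts has a coalition partner, and you concede that even the prerequisite cardinality bound $m\ge 2\gamma'(G)-1$ does not follow from your counting. So as written the right inequality is not proved. The paper offers nothing to compare against here: it declares the observation ``straightforward'' and omits the proof entirely.

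The more important point is that this gap cannot be closed, because the inequality $2\gamma'(G)-1\le EC(G)$ is false. Long paths are counterexamples. The path $P_{11}$ has ten edges, no isolated edge and no full edge, and $\gamma'(P_{11})=4$ (three edges of a path are adjacent to at most six others, hence dominate at most nine of the ten edges), so the claimed bound would force $EC(P_{11})\ge 7$; but by the paper's own Observation \ref{obs11}, $EC(P_n)=6$ for all $n\ge 11$. More generally $\gamma'(P_n)=\lceil (n-1)/3\rceil$ grows without bound while $EC(P_n)$ stays at $6$, reflecting the fact that the edge coalition number is controlled by local degree data (compare Observation \ref{obs-del-col}) whereas $\gamma'$ is a global quantity. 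So your instinct that ``partner existence'' is the crux was right in the strongest sense: no allocation of the blocks exists in general. Any correct lower bound of this type must replace $\gamma'(G)$ by a degree-bounded quantity, as in Theorem \ref{the-bound}, which gives $EC(G)\ge \delta(G)+1$ under the same hypotheses.
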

\begin{proposition}
\label{obs2}
If $G\neq K_n$ is a graph of order $n$ with $k$ vertices of degree $n-1$,
then $kn-\frac{k(k+1)}{2}\leq EC(G)$.
Moreover, this bound is sharp.
\end{proposition}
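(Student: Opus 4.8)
The plan is to show that the quantity $kn-\frac{k(k+1)}{2}$ is exactly the number of edges incident with the $k$ vertices of degree $n-1$, and then to exhibit an $ec$-partition with at least that many blocks. Write $v_1,\dots,v_k$ for the full-degree vertices and let $F$ be the set of edges having at least one endpoint in $\{v_1,\dots,v_k\}$. By inclusion--exclusion, the $k$ stars centred at the $v_i$ contribute $k(n-1)$ edges, while the $\binom{k}{2}$ edges joining two full-degree vertices are each counted twice; hence $|F|=k(n-1)-\binom{k}{2}=kn-\frac{k(k+1)}{2}$. So it suffices to build an $ec$-partition of order at least $|F|$, after which Corollary \ref{cor6} shows the construction is genuinely constrained from above.

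First I would record that $F$ is itself an edge-dominating set: any edge $xy\notin F$ has both endpoints outside $\{v_1,\dots,v_k\}$, and since $v_1$ is adjacent to every other vertex the edge $v_1x\in F$ is adjacent to $xy$. Next I would take the candidate partition $\pi$ consisting of the $|F|$ singletons $\{e\}$ with $e\in F$, together with the single block $E\setminus F$ when the latter is nonempty; this has at least $|F|$ blocks, so the bound follows once $\pi$ is shown to be an $ec$-partition. Because no block contains an entire star at a full-degree vertex (each singleton has one edge, and $E\setminus F$ meets no $v_i$), no block other than a full edge can be edge-dominating, so every block that is not a full edge must be shown to possess a coalition partner inside $\pi$.

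The heart of the argument is this coalition check. For a singleton $\{e\}$ with $e=v_iu$ that is not a full edge, the edges it fails to dominate are exactly those lying inside the subgraph induced on $V\setminus\{v_i,u\}$, so I need a block whose union with $\{e\}$ covers them. The natural partner is a clique edge $v_jv_\ell$ between two further full-degree vertices: then $\{v_i,u,v_j,v_\ell\}$ captures three of the $v$'s, and when $k\le 3$ this set already contains all full-degree vertices and is therefore a vertex cover, making $\{v_iu,v_jv_\ell\}$ edge-dominating while neither edge dominates alone. The main obstacle is precisely the regime of many full-degree vertices: a single partner edge supplies only two vertices, so $\{v_i,u\}\cup V(f)$ can contain at most three of the $v$'s, and for $k\ge 4$ no single edge makes the pair dominating; the same difficulty afflicts the leftover block $E\setminus F$, whose completion to an edge-dominating set also seems to require reaching all full-degree vertices at once. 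I therefore expect the verification to force several clique edges to be bundled into one block when $k$ is large, which lowers the number of blocks and stands in tension with the target count $|F|$; reconciling this is the crux, and I suspect the clean equality is really attained only for small $k$.

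Finally, for sharpness I would exhibit small-$k$ instances where $E=F$ and the singleton partition already succeeds. For $k=1$ the star $K_{1,n-1}$ has every edge full, so $EC=m=n-1=1\cdot n-\frac{1\cdot 2}{2}$. For $k=2$ the join $K_2+\overline{K_{n-2}}$ has exactly two full-degree vertices $v_1,v_2$, all $m=2n-3$ of its edges incident with them, the central edge $v_1v_2$ a full edge, and each remaining edge $v_1u$ pairing with some $v_2w$ into an edge-dominating coalition; hence $EC=m=2n-3=2n-\frac{2\cdot 3}{2}$, meeting the bound with equality.
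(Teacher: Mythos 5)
Your construction is essentially the paper's: the paper also takes the $kn-\frac{k(k+1)}{2}$ edges incident with the full-degree vertices as (in effect) singleton blocks, lumps the remaining edges into one set $A$, and then simply \emph{asserts} that the result is an $ec$-partition, offering $P_3$ for sharpness. The difference is that you actually attempted the verification that every block has a coalition partner, and the obstruction you ran into is genuine, not an artifact of your route. For a singleton $\{v_iu\}$ to have a partner in this partition, either some other single edge $f$ must make $\{v_iu,f\}$ edge-dominating --- which requires the at most four vertices of $\{v_i,u\}\cup V(f)$ to meet every remaining edge, impossible once enough full-degree vertices (and hence clique edges among them) lie outside that set --- or $\{v_iu\}\cup(E\setminus F)$ must be edge-dominating, which fails for $k\ge 4$ because a clique edge $v_jv_\ell$ with $j,\ell\neq i$ is adjacent to no edge of $E\setminus F$ and need not be adjacent to $v_iu$. (Even your $k\le 3$ step is not quite right as stated: a set containing all full-degree vertices need not be a vertex cover, since two non-full vertices outside it may be adjacent.)

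In fact the statement is false for large $k$, so no completion of the argument exists. Take $G=K_n-e$ with $n\ge 7$: then $k=n-2$ and the claimed bound reads $EC(G)\ge (n-2)n-\frac{(n-2)(n-1)}{2}=\frac{(n-2)(n+1)}{2}=\binom{n}{2}-1=m$, which together with Corollary \ref{cor6} would force $EC(G)=m$ and hence force the singleton partition to be an $ec$-partition. But no edge of $K_n-e$ is a full edge, and any two edges span at most four vertices, leaving at least three vertices that still induce an edge of $K_n-e$; hence no pair of edges is edge-dominating and the singleton partition is not an $ec$-partition. So your suspicion that the clean count is attainable only for small $k$ is correct, and the step you could not complete is precisely the step the paper's proof omits. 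Your sharpness instances for $k=1,2$ are fine (the paper uses $P_3$), but neither your argument nor the paper's establishes the proposition for general $k$, and for large $k$ it cannot be established.
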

\begin{proof}
Let $v_i$ ($1\le i\le k$) be the vertices of degree $n-1$, and let $v_i$ for $i>k$ be the remaining vertices of $G$.

Consider the edges $v_iv_j$, where $1<j\le n$ and $j>i$, that are incident to $v_i$ for $1\le i\le k$.
Define
\[
A=E(G)\setminus \bigcup_{i=1}^{k}
\left(
\bigcup_{\substack{j=1\\ j\neq i}}^{n}
\{v_iv_j\}
\right).
\]

Then, the set $E_i=A\cup\{v_iv_j:1\le i\le k,\;1<j\le n,\;j>i\}$ 
forms an $ec$-partition. Consequently,   $kn-\frac{k(k+1)}{2}\leq EC(G).$

To verify the sharpness of this bound, consider the path $P_3$, since
\[EC(P_3)=2=3-\frac{1(1+1)}{2}.\]
\end{proof}
\begin{theorem}
\label{the-bound}
If $G$ is a graph with no full edge and minimum degree $\delta(G)\geq 1$, then $1+\delta(G)\leq EC(G)$.
\end{theorem}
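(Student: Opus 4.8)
The plan is to exhibit an explicit $ec$-partition of order $\delta(G)+1$ built around a vertex of minimum degree. Write $\delta=\delta(G)$, let $v$ be a vertex with $\deg(v)=\delta$, and let $e_1=vu_1,\dots,e_\delta=vu_\delta$ be the edges incident with $v$. Put $R=E(G)\setminus\{e_1,\dots,e_\delta\}$ and consider $\pi=\{\{e_1\},\dots,\{e_\delta\},R\}$, a partition of $E(G)$ into $\delta+1$ cells. (Note $R\neq\emptyset$: otherwise every edge meets $v$, so $G=K_{1,\delta}$, which has full edges, contrary to hypothesis.) First I would record two facts. Since $G$ has no full edge, no singleton $\{e_i\}$ is an edge-dominating set. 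And for every $i$ the set $R\cup\{e_i\}$ \emph{is} an edge-dominating set: $R$ already contains every edge not incident with $v$, while $e_i$ is adjacent to each of the remaining edges $e_j$ ($j\neq i$) through $v$, so $R\cup\{e_i\}$ dominates all of $E(G)$.

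Granting this, the whole question reduces to the status of the large cell $R$. An edge $e_i=vu_i$ is dominated by $R$ only by an edge of $R$ incident with $u_i$, and such an edge exists exactly when $\deg(u_i)\ge 2$; hence $R$ is an edge-dominating set if and only if every neighbour of $v$ has degree at least $2$. If some neighbour of $v$ has degree $1$, then $R$ is \emph{not} an edge-dominating set, and for each $i$ the cells $\{e_i\}$ and $R$ are both non-dominating while $\{e_i\}\cup R$ dominates, so they form an edge coalition; every cell of $\pi$ then has a partner, $\pi$ is an $ec$-partition, and $EC(G)\ge\delta+1$.

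It remains to treat the case in which $R$ is itself an edge-dominating set, that is, every neighbour of $v$ has degree at least $2$; this is the crux. Here $R$ cannot serve as a cell, so I would deliberately weaken it. Fix a neighbour $u_1$, let $F$ be the set of edges incident with $u_1$ other than $e_1$ (nonempty, as $\deg(u_1)\ge 2$), and replace $R$ by $B=R\setminus F$, moving $F$ into one additional cell. The only edges of $R$ that could dominate $e_1$ lie in $F$, so $B$ no longer dominates $e_1$ and is non-dominating; and since $F\cup B=R$ is an edge-dominating set, the cells $F$ and $B$ form a coalition. Provided $B$ still dominates every edge of $F$, the unique edge missed by $B$ is $e_1$, which every $e_i$ recovers through $v$, so $B\cup\{e_i\}$ is edge-dominating and each $\{e_i\}$ pairs with $B$; this gives an $ec$-partition of order $\delta+2\ge\delta+1$.

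The main obstacle is exactly the bookkeeping hidden in that last proviso: I must guarantee both that $B=R\setminus F$ still dominates the relocated edges of $F$ (equivalently, that each neighbour of $u_1$ other than $v$ retains an incident edge in $B$) and that the relocated cell $F$ is not itself an edge-dominating set. Both can fail only in locally dense configurations around $v$—for instance when $u_1$ has a degree-one neighbour, or when $v$, $u_1$ and their common neighbours are heavily interconnected. I expect to dispose of these by choosing the neighbour $u_1$ (and, if necessary, splitting $F$ across several cells) so that the two provisos hold, treating the few remaining degenerate graphs by hand; carrying out this case analysis cleanly is the technical heart of the argument.
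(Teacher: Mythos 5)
There is a genuine gap, and you have named it yourself: the entire argument in the case where $R$ is an edge-dominating set is deferred to a case analysis that is never carried out, and you explicitly call that case ``the technical heart of the argument.'' This is not a marginal case but the main one. Since $v$ has minimum degree, a neighbour of $v$ can have degree $1$ only if $\delta(G)=1$ and $e_1$ is an isolated edge of $G$; so for every graph with $\delta(G)\geq 2$, and for every connected graph on at least three vertices, you land in the unresolved case. There your construction replaces $R$ by $B=E(G)\setminus N[e_1]$ and moves $F$ (the edges at $u_1$ other than $e_1$) into a new cell, and the provisos you flag do fail on small inputs. For $K_4$ with $v=a$ and $u_1=b$ one gets $F=\{bc,bd\}$, which is a non-singleton edge-dominating set of $K_4$ (every edge of $K_4$ meets $b$, $c$ or $d$), so $F$ is an illegal cell; by symmetry no choice of $u_1$ avoids this, so $F$ must be split, and after splitting one is looking at an entirely different partition whose coalition structure has to be re-verified from scratch. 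You give no rule for choosing $u_1$, no criterion for when and how to split $F$, and no argument that the resulting cells always acquire partners (in particular, for $\delta\geq 2$ the singletons $\{e_i\}$ with $i\geq 2$ need $B\cup\{e_i\}$ to dominate the edges of $F$ that $B$ misses, which is exactly the point in doubt). As it stands the proof establishes the theorem only for graphs containing an isolated edge.

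For what it is worth, the construction you arrive at in your repair is essentially the one the paper uses: the $k$ singletons $\{e_i\}$ together with the set $E(G)\setminus N[e_1]$, chosen precisely so that the large cell fails to dominate $e_1$. The paper, however, never assigns the edges of $N(e_1)$ incident to $u_1$ but not to $v$ --- your set $F$ --- to any cell, so its list of sets is not a partition of $E(G)$ either. You have correctly isolated the real difficulty (where $F$ goes, and why that cell is neither dominating nor partnerless), which is to your credit, but isolating it is not the same as resolving it.
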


\begin{proof}
Let $G$ be a graph of order $n$ with $1\leq \delta(G)\leq n-2$, and let $v$ be a vertex in $G$ with degree
$deg(v)=\delta(G)=k$.

Let $N_E(v)=\{e_1,e_2,\cdots,e_k\}$ be the set of edges incident to $v$.

Since $G$ has no full edge, $E(G)\setminus N[e_1]\neq\emptyset$.   Indeed, if $E(G)\setminus N[e_1]=\emptyset$, then every edge of $G$ is either equal to $e_1$ or adjacent to $e_1$. Hence, $e_1$ is a full edge, contradicting the hypothesis.

We define an edge partition $\pi$ of $G$ as follows:
$E_i=\{e_i\}$, for $1\leq i\leq k$, and  $E_{k+1}=E(G)\setminus N[e_1]$.

To see that $\pi$ is an $ec$-partition of $G$, observe that $E_{k+1}$ does not dominate $e_1$.

Moreover, since $G$ has no full edge, no singleton set edge dominates $G$. Hence, no set in $\pi$ is an edge-dominating set of $G$.

We now consider the following two cases.\\

\emph{i)} Suppose $k=1$, then $\pi=\{E_1,E_2\}$. It is clear that $\pi$ forms an $ec$-partition.\\

\emph{ii)} Suppose $k>1$, then $\pi$ is still an $ec$-partition because $E_1\cup E_{k+1}$ is an edge-dominating set of $G$.

Moreover, for each $2\leq i\leq k$, the set $E_i\cup E_{k+1}$ is also an edge-dominating set of $G$.
We claim that  every edge in $E_i$   $2\leq i\leq k$, is adjacent to at least one edge in $E_{k+1}$, assume, for contradiction, that there exists an edge $e_j\in E_i$ that is not adjacent to any edge in $E_{k+1}$.

In this case, $e_j$ would be a pendant edge, meaning that there exists a vertex 
$v_j$ incident to this edge with ${deg(v_j)=1}$, which contradicts the assumption that $k=\delta(G)>1$.
\end{proof}
\subsection{Graphs with Small Values of the Edge Coalition Number}
We characterize graphs with edge coalition numbers at most three.
\begin{theorem}
\label{the-cases}
 Let $G$ be a connected graph. Then \\
\emph{1}. $EC(G)=1$ if and only if $G = K_2$. \\
\emph{2}. $EC(G)=2$ if and only if  $G\in \{P_3, \overline{C_4}\} $.
\end{theorem}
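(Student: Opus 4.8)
The plan is to reduce both parts to counting edges, exploiting the principle that an $ec$-partition of order $1$ can only be the trivial partition $\{E(G)\}$, and that the size $m=|E(G)|$ tightly constrains $EC(G)$. For Part~1, first observe that if $m=1$ then $G\cong K_2$ and the unique partition $\{E(G)\}$ is a singleton edge-dominating set, so $EC(K_2)=1$. Conversely, if $EC(G)=1$ then the only $ec$-partition has a single block $E(G)$; since a lone block cannot form an edge coalition, Definition~\ref{def2} forces $E(G)$ to be a \emph{singleton} edge-dominating set, whence $m=1$ and $G\cong K_2$. Equivalently, when $m\ge 2$ the order-$1$ partition is invalid because $E(G)$ is not a singleton, while Observation~\ref{the-exist} guarantees that some $ec$-partition exists; that partition then has order at least $2$, giving $EC(G)\ge 2$.

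For Part~2, I would pin down $m$ exactly. The easy direction verifies that every graph with precisely two edges realizes $EC=2$: the two edges are either adjacent, in which case each is a singleton edge-dominating set and $\{\{e_1\},\{e_2\}\}$ is an $ec$-partition of $P_3$; or independent, in which case neither singleton dominates but their union does, so $\{\{e_1\},\{e_2\}\}$ is again an $ec$-partition, now of $\overline{C_4}\cong 2K_2$. In both cases no partition can exceed order $m=2$, so $EC=2$. The substance of the converse is the following claim, which I would isolate as a lemma: \emph{if $G$ has at least three edges then $EC(G)\ge 3$.} Granting it, $EC(G)=2$ forces $m\le 2$, and together with $m\ge 2$ (from Part~1, since $m\le 1$ yields $EC\le 1$) we get $m=2$, placing $G$ among the two graphs above.

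To prove the lemma I would split on whether $G$ has a full edge. If $G$ has no full edge then $\gamma'(G)\ge 2$, and since a connected graph with $m\ge 2$ has no isolated edge, Observation~\ref{obs1} immediately yields $EC(G)\ge 2\gamma'(G)-1\ge 3$. If $G$ has a full edge $e=uv$, note first that any block containing a full edge is automatically an edge-dominating set, hence must be the singleton $\{e\}$; so I would build an order-$3$ partition $\{\{e\},F_1,F_2\}$ by splitting the remaining edges $F=E(G)\setminus\{e\}$ into two parts, each failing to be an edge-dominating set, whose union $F$ dominates $e$ and all of $F$, so that $F_1$ and $F_2$ form an edge coalition. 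A natural split takes $F_1$ to be the edges incident to $u$ and $F_2$ those incident to $v$; one checks $F_1$ fails to dominate a $v$-edge $vx$ unless $x\in N(u)$, and symmetrically for $F_2$. The star $K_{1,n}$ (where every edge is full) is handled directly, since its singleton partition already has order $m\ge 3$.

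The main obstacle is exactly this full-edge case: one must guarantee that $E(G)\setminus\{e\}$ can always be partitioned into two non-dominating coalition partners, and treat the degenerate configurations separately---when every neighbor of $v$ is also a neighbor of $u$ (the ``book'' $K_2+\overline{K_t}$, e.g.\ $K_4-e$), the $u$/$v$ split can make a part edge-dominating, and one falls back on the fact that all small subsets are then coalition-ready, so the singleton partition itself has order $m\ge 3$. A second, subtler point concerns completeness of the characterization: the implication $m\ge 3\Rightarrow EC(G)\ge 3$ uses connectedness to exclude disconnected graphs such as $nK_2$ with $n\ge 3$, whose only edge-dominating set is all of $E(G)$ and which therefore satisfy $EC=2$; I would accordingly run Part~2 under the standing connectedness hypothesis, recording $\overline{C_4}\cong 2K_2$ as the distinguished case at $m=2$.
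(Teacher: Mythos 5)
Your route is genuinely different from the paper's. The paper argues by the number of vertices: it observes that $EC(G)=2$ forces $n\geq 3$, checks the graphs on three or four vertices, and simply asserts that every graph on at least four vertices other than $\overline{C_4}$ admits an $ec$-partition of order three. You instead reduce everything to the edge count via the lemma ``connected and $m\geq 3$ implies $EC(G)\geq 3$,'' splitting on the existence of a full edge and invoking Observation~\ref{obs1} when there is none. Your reduction is cleaner and more honest about where the work lies, and it buys something the paper misses: the statement as printed (``Let $G$ be a graph'') is actually false for disconnected graphs, since $nK_2$ with $n\geq 3$ has $E(G)$ as its unique edge-dominating set, so every $ec$-partition has exactly two blocks and $EC(nK_2)=2$. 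Flagging the connectedness hypothesis and recording $\overline{C_4}\cong 2K_2$ as the $m=2$ member of that family is a real improvement over the paper's treatment. Part~1 and the forward direction of Part~2 are correct as you wrote them.

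The one genuine gap is the full-edge case of your lemma. Your primary construction $\{\{e\},F_1,F_2\}$ with $F_1,F_2$ the $u$-side and $v$-side of $F=E(G)\setminus\{e\}$ can fail (as you note, for $K_4-e$ and the books $K_2+\overline{K_t}$ the $u$-side is edge-dominating), and your fallback---``all small subsets are then coalition-ready, so the singleton partition works''---is asserted, not proved, and is stated only for that degenerate sub-case. The fix is to prove the fallback in full generality, which makes the $u$/$v$ split unnecessary: if $e=uv$ is full, then every edge of $G$ meets $u$ or $v$, so for any non-full $u$-edge $f$ and any non-full $v$-edge $g$ the pair $\{f,g\}$ dominates all $u$-edges (via $f$) and all $v$-edges (via $g$), hence all of $E(G)$; moreover one checks that if every non-full edge avoided $v$ (so that such a $g$ did not exist), then every $v$-edge $vx$ would be full, forcing each non-full $u$-edge $uw$ to satisfy $w=x$ and hence forcing the set of non-full edges to be empty. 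Consequently, whenever $G$ has a full edge, every non-full edge has a singleton coalition partner and the singleton partition is itself an $ec$-partition of order $m\geq 3$. With that lemma in place your argument is complete; without it, the full-edge branch remains a sketch rather than a proof.
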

\begin{proof}
(1) It is straightforward.\\
(2) If $G=\overline{C_4}$  or $G=P_3$, then clearly edge singleton sets will form the only edge coalition 
partition of graph $G$ and so we have $EC(G)=2$.
Conversely, assume that $EC(G)=2$. By (1), $G$ must have order $n\geq 3$. 
If $n=3$, then $G=P_3$, since $EC(K_3)=3$. If $n=4$, the only graph with $EC(G)=2$ is $\overline{C_4}$.
Indeed, every graph of order four, other than $\overline{C_4}$, admits an $ec$-partition with three sets.
Therefore,  the result holds.
\end{proof}
\begin{theorem}
\label{the-char-3}
Let $G$ be a connected graph. Then $EC(G)=3$ if and only if  $G\in \{C_3, P_4, K_{1,3}\}$.
\end{theorem}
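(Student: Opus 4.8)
The plan is to treat the two implications separately, the reverse direction being immediate and the forward direction reducing to a single lower-bound claim. For sufficiency I would simply quote Observation~\ref{obs11}, which records $EC(K_{1,3})=3$, $EC(P_4)=3$ and $EC(C_3)=3$; equivalently, one checks that the singleton partition $\pi_1$ is an $ec$-partition in each case — in $C_3$ and $K_{1,3}$ every edge is full, while in $P_4=(e_1,e_2,e_3)$ the middle edge $e_2$ is full and the outer singletons $\{e_1\},\{e_3\}$ form a coalition — so that $EC(G)=m=3$.

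For necessity, assume $G$ is connected with $EC(G)=3$. Corollary~\ref{cor6} gives $EC(G)\le m$, hence $m\ge 3$. The crux is the claim that every connected graph with $m\ge 4$ satisfies $EC(G)\ge 4$; granting it, $m\ge 4$ would contradict $EC(G)=3$, forcing $m=3$. I would then enumerate the connected graphs of size $3$: such a graph has at most $m+1=4$ vertices; the two trees on four vertices are $P_4$ and $K_{1,3}$, and a connected non-tree with three edges contains a cycle, hence has at most three vertices and must be $C_3$. Therefore $G\in\{C_3,P_4,K_{1,3}\}$.

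To establish $m\ge 4\Rightarrow EC(G)\ge 4$, I would split on the presence of a full edge. If $G$ has no full edge then $\gamma'(G)\ge 2$, and since $G$ is connected with $m\ge 2$ it has no isolated edge, so Observation~\ref{obs1} yields $EC(G)\ge 2\gamma'(G)-1$. This already gives $EC(G)\ge 5$ when $\gamma'(G)\ge 3$, while if $\gamma'(G)=2$ and $\delta(G)\ge 3$ then Theorem~\ref{the-bound} gives $EC(G)\ge 1+\delta(G)\ge 4$. If instead $G$ has a full edge $uv$, then every edge is incident to $u$ or $v$, so $G$ is a double star $S_{p,q}$ possibly augmented by triangles through common neighbours of $u$ and $v$; for a genuine double star Observation~\ref{obs11} gives $EC=p+q+1=m\ge 4$.

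The main obstacle is the two residual families left by the bounds above, namely graphs with a full edge that carry extra triangles, and graphs with no full edge having $\gamma'(G)=2$ and $\delta(G)\le 2$; in both the general tools only certify $EC(G)\ge 3$, so an order-$4$ $ec$-partition must be produced directly. The subtlety is that the singleton partition need not be an $ec$-partition — exactly as the middle edge $e_3$ of $P_6$ has no single-edge coalition partner — so some edges may have to be grouped into a common block. The saving structure is that in the residual cases $G$ is tightly constrained: when $\gamma'(G)=2$ the four endpoints of a minimum edge-dominating set form a vertex cover, and when a full edge is present $G$ is a bounded augmentation of a double star. I would use this to group the few \emph{central} edges into four non-dominating blocks that pairwise enter coalitions, yielding $EC(G)\ge 4$ and completing the contradiction.
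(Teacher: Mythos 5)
Your overall architecture is sound and matches the paper's in outline: sufficiency is the easy direction, and necessity reduces to showing that every connected graph with $m\geq 4$ edges has $EC(G)\geq 4$, after which the enumeration of connected graphs with three edges correctly yields $\{C_3,P_4,K_{1,3}\}$. However, the proof of that key claim has a genuine gap, and you acknowledge it yourself: the bounds you invoke (Observation~\ref{obs1} for $\gamma'(G)\geq 3$, Theorem~\ref{the-bound} for $\delta(G)\geq 3$, Observation~\ref{obs11} for genuine double stars) leave two residual families untreated, namely (i) graphs with a full edge whose two endpoints have common neighbours, and (ii) graphs with no full edge, $\gamma'(G)=2$ and $\delta(G)\leq 2$. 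These are not marginal leftovers --- family (ii) contains $P_6$, $P_7$, $K_{2,s}$, and in fact every graph whose edge set is dominated by two edges but which has a vertex of small degree, which is an infinite class with unbounded structure. The paper's own discussion of $P_6$ shows exactly why this case is the hard one: the singleton partition fails there (the middle edge $e_3$ has no single-edge partner), so the required order-$4$ $ec$-partition must be built by grouping edges into non-dominating blocks whose unions dominate, and verifying that every block acquires a partner. Saying you ``would group the few central edges into four non-dominating blocks that pairwise enter coalitions'' names the goal, not a proof; for instance, taking a minimum edge-dominating set $\{f,g\}$ and splitting the remainder $R=E\setminus\{f,g\}$ into $R_1,R_2$ requires you to guarantee that neither $R_i$ is itself dominating and that each has a partner, and neither point is automatic.

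For comparison, the paper avoids the domination-number bounds entirely: for $m\geq 5$ it writes down an explicit four-block partition $\{\{e_1\}\cup A,\{e_2\},\{e_3\},\{e_4\}\}$ and repairs it when $\{e_1\}\cup A$ happens to dominate, and for $m=4$ it exhaustively enumerates the connected four-edge graphs (the family $\Omega$) and checks $EC\geq 4$ on each. Your route via $\gamma'$ and $\delta$ is a reasonable alternative decomposition and cleanly disposes of several subcases the paper handles by hand, but as written it does not close the argument; you would need to carry out the explicit block construction in the two residual families (or fall back on something like the paper's direct partition for $m\geq 5$ plus a finite check at $m=4$) before the necessity direction is established.
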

\begin{proof}
If $G=C_3$, $G=P_4$ or $G=K_{1,3}$, then clearly the singleton edge partition forms an 
edge coalition partition of $G$, and hence $EC(G)=3$.
Conversely, assume that $EC(G)=3$ and let $G$ be a connected graph with $E(G)=e_1,\cdots ,e_t$.
We claim that $t\leq 3$. Suppose, for a contradiction, that $t\geq 4$. 
We consider the following two cases.

\emph{Case 1)} If $t\geq 5$. Let $A=E(G)\setminus \{e_1,e_2,e_3,e_4\}$, $X=\{\{e_1\}\cup A,\{e_2\},\{e_3\},\{e_4\}\}$ be an edge partition of $G$.
None of the sets $\{e_2\},\{e_3\}$ and $\{e_4\}$ is a singleton edge-dominating set in $G$. 
If the set $\{e_1\}\cup A$ is not  edge-dominating in $G$, then $EC(G)\geq 4$, which contradicts our assumption. 
So, suppose $\{e_1\}\cup A$ is an edge-dominating set. There is an edge $e_i, ~2\leq i\leq4$ that is not dominated by by $e_1$. 
Let $B$ be the edges in $A$ adjacent to $e_i$. 
Since $\{e_1\}\cup A$ is an edge-dominating set and $e_i$ is not adjacent to $e_1$, at least one edge of $A$ must be adjacent to $e_i$. Hence, $B\neq\emptyset$. 

Let $C=\{e_j |~ 2 \leq j \leq 4, j \neq i\}$.
We consider the edge partition $X=\{(A \setminus B)\cup \{e_1\}, \{e_i\}, B, C\}$. 
If $C$ is not an edge dominating set, then $X$ forms an $EC(G)$-partition, since each
of $\{e_i\}$, $C$ and $B$ establishes an edge coalition with $(A\setminus B)\bigcup \{e_1\}$. 
If $C$ is an edge dominating set, then  the edge partition $X=\{(A\setminus B)\bigcup \{e_1\}, \{e_i\}, B, C_1, C_2\}$, such that $C_1$ and $C_2$ are subsets of $C$ that $C=C_1\cup C_2$. So, $EC(G)\geq 4$.

\emph{case 2)} If $t=4$, then we make use of family $\Omega$ depicted as follows,  so as to give the characterization of all connected graphs for which $t=4$.

(1) The cycle graph $C_5$  with at most $5$ chords.

(2) Graphs $G$  obtained from  $C_3$ by coinciding the center of two stars to at least two vertices of $C_3$. Or Graphs $G$ obtained from  $C_3$ by coinciding one center of double star $S_{p,q}$ to one vertex of $C_3$, like $W_1$ and $W_2$ in Figure~\ref{f1}. 
Or  Graphs $G$  obtained from  $C_3$ by coinciding the one leaf of at least one star to exact one vertex of $C_3$.

(3) The graphs $G$  obtained from cycle $C_4=abcda$ by coinciding the center of one star $K_{1,n}$ to one of vertices of $C_4$ or only by
coinciding the centers of two stars $K_{1,m}, K_{1,n}$ to the only two vertices  $a,c$ or $b,d$ of $C_4$, like $W_3$ and $W_7$  in Figure~\ref{f1}. 

(4) The graphs $G$  obtained from $K_4-e$  with four vertices $x,y,z,t$ and edge set $\{xy,yz,zt,tx,xz\}$ by coinciding the center of one star $K_{1,n}$ to one of the vertices of $K_4-e$  or only by
coinciding the centers of two stars $K_{1,m}, K_{1,n}$ to the  two vertices  $x,z$, like $W_4$, $W_5$ and $W_6$  in Figure~\ref{f1}.

(5) Graphs $G$  obtained from at least two  $C_3$ with a common vertex by coinciding the center of the star $K_{1,n}$ to this common vertex and by coinciding the one leaf of at least one star $K_{1,m}$ to this common vertex, like $W_9$ in Figure~\ref{f1}.

(6) The graphs $G$  obtained from $K_4$ by coinciding the center of a star $K_{1,n}$ to only one of the vertices of $K_4$, like $W_8$  in Figure~\ref{f1}.

(7) The graphs $G$  obtained from $m\ge 3$ stars $K_{1,n_i}$, ($1\le i\le m$) by coinciding the one leaf of each  $m-1$ stars $K_{1,n_i}$ and the center of another star, like $W_{10}$  in Figure~\ref{f1}.

(8) The graph $G$  obtained from cycle $C_4=abcda$ by coinciding the centers of two star $K_{1,m}, K_{1,n}$ to the  two vertices  $b,d$ of $C_4$ and   make adjacent some independent vertices to the vertices $b,d$, like $W_{11}$ in Figure~\ref{f1}.

(9) The graph $G$  obtained from cycle $K_4-e$ by coinciding the centers of two stars $K_{1,m}, K_{1,n}$ to the  two vertices of degree $3$ of $K_4-e$  and make adjacent some independent vertices to the two vertices of degree $3$ of $K_4-e$, like $W_{12}$  in Figure~\ref{f1}.

 Thus, $t=4$ if and only if $G\in \Omega$. Also it is observed that for any graph $G$ in $\Omega$, $EC(G)\geq 4$.
Thus, in both cases, we arrive at a contradiction. So, $EC(G)=3$ if and only if $t=3$, it means, $G=P_4$, $G=C_3$ and $G=K_{1,3}$.
\end{proof}
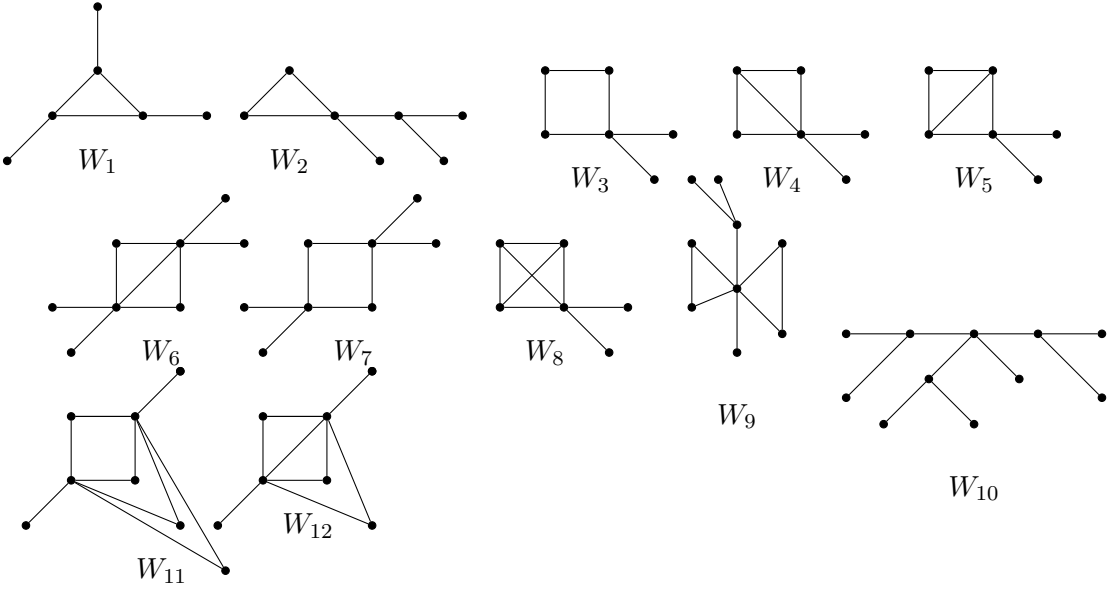
\begin{figure}[!t]
 \begin{tikzpicture}[node distance=0.85cm, inner sep=0pt]
 \node[state, minimum size=1mm] (I_23)[fill=black] {$$};
 \node[state, minimum size=1mm] (I_24)[below right of=I_23, fill=black] {$$};
  \node[state, minimum size=1mm] (I_25)[below left of=I_23, fill=black] {$$};
  \node[state, minimum size=1mm] (I_26)[above of=I_23, fill=black] {$$};
  \node[state, minimum size=1mm] (I_27)[right of=I_24, fill=black] {$$};
    \node[state, minimum size=1mm] (I_100)[below left of=I_25, fill=black] {$$};
  \node(L)[below right of=I_25] { $\small{W_{1}}$};
  \path (I_23) edge (I_24);
 \path (I_23) edge (I_25);
\path (I_24) edge (I_25);
\path (I_26) edge (I_23);
\path (I_27) edge (I_24);
\path (I_100) edge (I_25);

 \node[state, minimum size=0mm] (H_3)[right of=I_23, fill=white] {$$};
  \node[state, minimum size=0mm] (H_4)[right of=H_3, fill=white] {$$};
 \node[state, minimum size=1mm] (I_28)[right of=H_4, fill=black] {$$};
 \node[state, minimum size=1mm] (I_29)[below right of=I_28, fill=black] {$$};
  \node[state, minimum size=1mm] (I_30)[below left of=I_28, fill=black] {$$};
  \node[state, minimum size=1mm] (I_31)[right of=I_29, fill=black] {$$};
   \node[state, minimum size=1mm] (I_101)[below right of=I_31, fill=black] {$$};
  \node[state, minimum size=1mm] (I_32)[right of=I_31, fill=black] {$$};
    \node[state, minimum size=1mm] (I_102)[below right of=I_29, fill=black] {$$};
  \node(L)[below right of=I_30] { $\small{W_{2}}$};
 \path (I_28) edge (I_29);
 \path (I_28) edge (I_30);
\path (I_29) edge (I_30);
\path (I_29) edge (I_31);
\path (I_31) edge (I_32);
\path (I_101) edge (I_31);
\path (I_102) edge (I_29);

 \node[state, minimum size=0mm] (H_5)[right of=I_28, fill=white] {$$};
  \node[state, minimum size=0mm] (H_6)[right of=H_5, fill=white] {$$};
  \node[state, minimum size=0mm] (H_7)[right of=H_6, fill=white] {$$};
 \node[state, minimum size=1mm] (I_33)[right of=H_7, fill=black] {$$};
 \node[state, minimum size=1mm] (I_34)[right of=I_33, fill=black] {$$};
  \node[state, minimum size=1mm] (I_35)[below of=I_33, fill=black] {$$};
  \node[state, minimum size=1mm] (I_36)[below of=I_34, fill=black] {$$};
  \node[state, minimum size=1mm] (I_37)[right of=I_36, fill=black] {$$};
  \node[state, minimum size=1mm] (I_38)[below right of=I_36, fill=black] {$$};
  \node(L)[below right of=I_35] { $\small{W_{3}}$};
 \path (I_33) edge (I_34);
 \path (I_33) edge (I_35);
\path (I_34) edge (I_36);
\path (I_35) edge (I_36);
\path (I_37) edge (I_36);
\path (I_38) edge (I_36);

 \node[state, minimum size=0mm] (H_8)[right of=I_34, fill=white] {$$};
 \node[state, minimum size=1mm] (I_39)[right of=H_8, fill=black] {$$};
 \node[state, minimum size=1mm] (I_40)[right of=I_39, fill=black] {$$};
  \node[state, minimum size=1mm] (I_41)[below of=I_39, fill=black] {$$};
  \node[state, minimum size=1mm] (I_42)[below of=I_40, fill=black] {$$};
  \node[state, minimum size=1mm] (I_43)[right of=I_42, fill=black] {$$};
  \node[state, minimum size=1mm] (I_44)[below right of=I_42, fill=black] {$$};
  \node(S)[below right of=I_41] { $\small{W_{4}}$};
  \path (I_39) edge (I_40);
 \path (I_39) edge (I_41);
\path (I_42) edge (I_40);
\path (I_42) edge (I_41);
\path (I_43) edge (I_42);
\path (I_44) edge (I_42);
\path (I_42) edge (I_39);

\node[state, minimum size=0mm] (H_9)[right of=I_40, fill=white] {$$};
 \node[state, minimum size=1mm] (I_45)[right of=H_9, fill=black] {$$};
 \node[state, minimum size=1mm] (I_46)[right of=I_45, fill=black] {$$};
  \node[state, minimum size=1mm] (I_47)[below of=I_45, fill=black] {$$};
  \node[state, minimum size=1mm] (I_48)[below of=I_46, fill=black] {$$};
  \node[state, minimum size=1mm] (I_49)[right of=I_48, fill=black] {$$};
  \node[state, minimum size=1mm] (I_50)[below right of=I_48, fill=black] {$$};
  \node(L)[below right of=I_47] { $\small{W_{5}}$};
  \path (I_45) edge (I_46);
 \path (I_47) edge (I_45);
\path (I_46) edge (I_48);
\path (I_49) edge (I_48);
\path (I_50) edge (I_48);
\path (I_47) edge (I_48);
\path (I_46) edge (I_47);

\node[state, minimum size=0mm] (H_10)[below of=I_25, fill=white] {$$};
\node[state, minimum size=0mm] (H_11)[below of=H_10, fill=white] {$$};
 \node[state, minimum size=1mm] (I_51)[right of=H_11, fill=black] {$$};
 \node[state, minimum size=1mm] (I_52)[right of=I_51, fill=black] {$$};
  \node[state, minimum size=1mm] (I_53)[below of=I_51, fill=black] {$$};
  \node[state, minimum size=1mm] (I_54)[below of=I_52, fill=black] {$$};
  \node[state, minimum size=1mm] (I_55)[left of=I_53, fill=black] {$$};
  \node[state, minimum size=1mm] (I_56)[below left of=I_53, fill=black] {$$};
   \node[state, minimum size=1mm] (I_57)[right of=I_52, fill=black] {$$};
  \node[state, minimum size=1mm] (I_58)[above right of=I_52, fill=black] {$$};

  \node(L)[below right of=I_53] { $\small{W_{6}}$};
  \path (I_51) edge (I_52);
 \path (I_51) edge (I_53);
\path (I_52) edge (I_54);
\path (I_53) edge (I_54);
\path (I_55) edge (I_53);
\path (I_56) edge (I_53);
\path (I_57) edge (I_52);
\path (I_58) edge (I_52);
\path (I_52) edge (I_53);

\node[state, minimum size=0mm] (H_12)[right of=I_52, fill=white] {$$};
 \node[state, minimum size=1mm] (I_59)[right of=H_12, fill=black] {$$};
 \node[state, minimum size=1mm] (I_60)[right of=I_59, fill=black] {$$};
  \node[state, minimum size=1mm] (I_61)[below of=I_59, fill=black] {$$};
  \node[state, minimum size=1mm] (I_62)[below of=I_60, fill=black] {$$};
  \node[state, minimum size=1mm] (I_63)[right of=I_60, fill=black] {$$};
  \node[state, minimum size=1mm] (I_64)[above right of=I_60, fill=black] {$$};
   \node[state, minimum size=1mm] (I_65)[left of=I_61, fill=black] {$$};
  \node[state, minimum size=1mm] (I_66)[below left of=I_61, fill=black] {$$};

  \node(L)[below right of=I_61] { $\small{W_{7}}$};
  \path (I_59) edge (I_60);
 \path (I_59) edge (I_61);
\path (I_60) edge (I_62);
\path (I_61) edge (I_62);
\path (I_63) edge (I_60);
\path (I_64) edge (I_60);
\path (I_65) edge (I_61);
\path (I_66) edge (I_61);

\node[state, minimum size=0mm] (H_13)[right of=I_60, fill=white] {$$};
 \node[state, minimum size=1mm] (I_67)[right of=H_13, fill=black] {$$};
 \node[state, minimum size=1mm] (I_68)[right of=I_67, fill=black] {$$};
  \node[state, minimum size=1mm] (I_69)[below of=I_67, fill=black] {$$};
  \node[state, minimum size=1mm] (I_70)[below of=I_68, fill=black] {$$};
  \node[state, minimum size=1mm] (I_71)[right of=I_70, fill=black] {$$};
  \node[state, minimum size=1mm] (I_72)[below right of=I_70, fill=black] {$$};

  \node(L)[below right of=I_69] { $\small{W_{8}}$};
  \path (I_67) edge (I_68);
 \path (I_67) edge (I_69);
\path (I_68) edge (I_70);
\path (I_70) edge (I_69);
\path (I_71) edge (I_70);
\path (I_72) edge (I_70);
\path (I_69) edge (I_68);
\path (I_70) edge (I_67);

\node[state, minimum size=0mm] (H_14)[right of=I_68, fill=white] {$$};
 \node[state, minimum size=1mm] (I_73)[right of=H_14, fill=black] {$$};
 \node[state, minimum size=1mm] (I_74)[below of=I_73, fill=black] {$$};
  \node[state, minimum size=1mm] (I_75)[below right of=I_73, fill=black] {$$};
  \node[state, minimum size=1mm] (I_76)[above right of=I_75, fill=black] {$$};
  \node[state, minimum size=1mm] (I_77)[below right of=I_75, fill=black] {$$};
  \node[state, minimum size=1mm] (I_78)[below of=I_75, fill=black] {$$};
 \node[state, minimum size=1mm] (I_79)[above of=I_75, fill=black] {$$};
 \node[state, minimum size=1mm] (I_80)[left of=S, fill=black] {$$};
 \node[state, minimum size=1mm] (I_81)[above left of=I_79, fill=black] {$$};

  \node(L)[below of=I_78] { $\small{W_{9}}$};
  \path (I_73) edge (I_74);
 \path (I_73) edge (I_75);
\path (I_74) edge (I_75);
\path (I_75) edge (I_76);
\path (I_75) edge (I_77);
\path (I_76) edge (I_77);
\path (I_78) edge (I_75);
\path (I_79) edge (I_75);
\path (I_80) edge (I_79);
\path (I_81) edge (I_79);

 \node[state, minimum size=1mm] (I_15)[right of=I_77, fill=black] {$$};
 \node[state, minimum size=1mm] (I_16)[right of=I_15, fill=black] {$$};
  \node[state, minimum size=1mm] (I_17)[right of=I_16, fill=black] {$$};
  \node[state, minimum size=1mm] (I_18)[right of=I_17, fill=black] {$$};
  \node[state, minimum size=1mm] (I_19)[right of=I_18, fill=black] {$$};
   \node[state, minimum size=1mm] (I_20)[below of=I_15, fill=black] {$$};
  \node[state, minimum size=1mm] (I_21)[below of=I_19, fill=black] {$$};
   \node[state, minimum size=1mm] (I_22)[below left of=I_17, fill=black] {$$};
     \node[state, minimum size=1mm] (I_23)[below right of=I_17, fill=black] {$$};
  \node[state, minimum size=1mm] (I_24)[below left of=I_22, fill=black] {$$};
     \node[state, minimum size=1mm] (I_25)[below right of=I_22, fill=black] {$$};
    \node(L)[below of=I_25] { $\small{W_{10}}$};

  \path (I_15) edge (I_16);
  \path (I_16) edge (I_17);
 \path (I_17) edge (I_18);
\path (I_18) edge (I_19);
\path (I_20) edge (I_16);
\path (I_21) edge (I_18);
\path (I_22) edge (I_17);
\path (I_23) edge (I_17);
\path (I_24) edge (I_22);
\path (I_25) edge (I_22);

\node[state, minimum size=1mm] (I_82)[below of=I_56, fill=black] {$$};
 \node[state, minimum size=1mm] (I_83)[right of=I_82, fill=black] {$$};
  \node[state, minimum size=1mm] (I_84)[below of=I_82, fill=black] {$$};
  \node[state, minimum size=1mm] (I_85)[below of=I_83, fill=black] {$$};
  \node[state, minimum size=1mm] (I_86)[below right of=I_85, fill=black] {$$};
   \node[state, minimum size=1mm] (I_87)[above right of=I_83, fill=black] {$$};
  \node[state, minimum size=1mm] (I_88)[below left of=I_84, fill=black] {$$};
   \node[state, minimum size=1mm] (I_89)[below right of=I_86, fill=black] {$$};
  \node[state, minimum size=1mm] (I_90)[above right of=I_83, fill=black] {$$};
    \node(L)[left of=I_89] { $\small{W_{11}}$};

  \path (I_82) edge (I_83);
  \path (I_84) edge (I_82);
 \path (I_85) edge (I_83);
\path (I_82) edge (I_83);
\path (I_86) edge (I_83);
\path (I_86) edge (I_84);
\path (I_84) edge (I_85);
\path (I_88) edge (I_84);
\path (I_89) edge (I_83);
\path (I_89) edge (I_84);
\path (I_90) edge (I_83);

\node[state, minimum size=0mm] (T)[right of=I_83, fill=white] {$$};
\node[state, minimum size=1mm] (I_91)[right of=T, fill=black] {$$};
 \node[state, minimum size=1mm] (I_92)[right of=I_91, fill=black] {$$};
  \node[state, minimum size=1mm] (I_93)[below of=I_91, fill=black] {$$};
  \node[state, minimum size=1mm] (I_94)[below of=I_92, fill=black] {$$};

   \node[state, minimum size=1mm] (I_96)[above right of=I_92, fill=black] {$$};
  \node[state, minimum size=1mm] (I_97)[below left of=I_93, fill=black] {$$};
   \node[state, minimum size=1mm] (I_98)[below right of=I_94, fill=black] {$$};
  \node[state, minimum size=1mm] (I_99)[above right of=I_92, fill=black] {$$};
\node(L)[left of=I_98] { $\small{W_{12}}$};

  \path (I_91) edge (I_92);
  \path (I_93) edge (I_91);
 \path (I_94) edge (I_92);
\path (I_91) edge (I_92);

\path (I_93) edge (I_94);
\path (I_97) edge (I_93);
\path (I_98) edge (I_92);
\path (I_98) edge (I_93);
\path (I_99) edge (I_92);
\path (I_92) edge (I_93);
\end{tikzpicture}
\begin{center}
\caption {Some graphs in $\Omega $ }
\label{f1}
\end{center}
\end{figure}
\section{Graphs with Maximum Edge Coalition Number}
In this section, we study graphs $G$ for which the edge coalition number equals the size of $G$.

\subsection{Unicyclic graphs}

Let $G$ be a connected unicyclic graph of order $n$ and size $m$. Recall that every connected unicyclic graph of order $n$ has size $m=n$.
In this subsection, we characterize all connected unicyclic graphs satisfying $EC(G)=m$.\\

Let $\Theta$ denote the family of unicyclic graphs defined as follows.

(a) The cycle $C_n$, where $3\le n\le6$.

(b) Graphs $G$ obtained from $C_3$ by attaching one or more stars, where the center of each star is identified with a vertex of $C_3$, such as the graphs $G_{5}, G_{6}$, and $G_{7}$ shown in Figure~\ref{f2}.\\

(c) Graphs $G$ obtained from $C_3$ and a double star $S_{p,q}$
by identifying one vertex of $C_3$ with one of the centers of the double star, such as graph $G_{8}$ in Figure~\ref{f2}.\\

(d) Graphs $G$ obtained from $C_4$ by attaching one or two stars, where the center of each star is identified with a vertex of $C_4$, such as the graphs $G_{9}, G_{10}$, and $G_{11}$ shown in Figure~\ref{f2}.\\

(e) Graphs $G$ obtained from $C_5$ by attaching a star, where one vertex of $C_5$ is identified with the center of the star, such as graph $G_{12}$ in Figure~\ref{f2}.

\begin{figure}[!t]
\begin{tikzpicture}[node distance=0.85cm, inner sep=0pt]
\node[state, minimum size=1mm] (C)[fill=black] {$$};
 \node[state, minimum size=1mm] (D)[below left of=C, fill=black] {$$};
  \node[state, minimum size=1mm] (E)[below right of=C, fill=black] {$$};
  \node(L)[below right of=D] { $\small{G_{1}}$};
  \path (C) edge (D);
 \path (C) edge (E);
 \path (D) edge (E);
 \node(L)[below right of=D] { $\small{G_{1}}$};
  \node[state, minimum size=1mm] (F)[right of=C, fill=black] {$$};
 \node[state, minimum size=1mm] (G)[right of=F, fill=black] {$$};
  \node[state, minimum size=1mm] (H)[below of=F, fill=black] {$$};
   \node[state, minimum size=1mm] (I)[below of=G, fill=black] {$$};
  \path (F) edge (G);
 \path (H) edge (I);
 \path (F) edge (H);
 \path (G) edge (I);
  \node(L_1)[below right of=H] { $\small{G_{2}}$};
  \node[state, minimum size=1mm] (I_1)[right of=G, fill=black] {$$};
 \node[state, minimum size=1mm] (I_2)[right of=I_1, fill=black] {$$};
  \node[state, minimum size=1mm] (I_3)[below of=I_1, fill=black] {$$};
   \node[state, minimum size=1mm] (I_4)[below of=I_2, fill=black] {$$};
  \node[state, minimum size=1mm] (I_5)[below left of=I_4, fill=black] {$$};
   \node(L_2)[below of=I_4] { $\small{G_{3}}$};
  \path (I_1) edge (I_2);
 \path (I_1) edge (I_3);
 \path (I_2) edge (I_4);
 \path (I_3) edge (I_5);
 \path (I_4) edge (I_5);
  \node[state, minimum size=1mm] (I_6)[right of=I_2, fill=black] {$$};
 \node[state, minimum size=1mm] (I_7)[right of=I_6, fill=black] {$$};
  \node[state, minimum size=1mm] (I_8)[below of=I_6, fill=black] {$$};
   \node[state, minimum size=1mm] (I_9)[right of=I_8, fill=black] {$$};
  \node[state, minimum size=1mm] (I_10)[below of=I_8, fill=black] {$$};
  \node[state, minimum size=1mm] (I_11)[below of=I_9, fill=black] {$$};
   \node(L_3)[below right of=I_10] { $\small{G_{4}}$};
  \path (I_6) edge (I_7);
 \path (I_10) edge (I_11);
 \path (I_6) edge (I_8);
 \path (I_7) edge (I_9);
 \path (I_8) edge (I_10);
 \path (I_9) edge (I_11);
 \node[state, minimum size=1mm] (I_12)[right of=I_7, fill=black] {$$};
 \node[state, minimum size=1mm] (I_13)[below of=I_12, fill=black] {$$};
  \node[state, minimum size=1mm] (I_14)[right of=I_13, fill=black] {$$};
   \node[state, minimum size=1mm] (I_15)[right of=I_14, fill=black] {$$};
 \node(L_4)[below right of=I_14] { $\small{G_{5}}$};
  \path (I_12) edge (I_13);
 \path (I_12) edge (I_14);
 \path (I_13) edge (I_14);
 \path (I_14) edge (I_15);

 \node[state, minimum size=1mm] (I_16)[right of=I_15, fill=black] {$$};
 \node[state, minimum size=1mm] (I_17)[below of=I_16, fill=black] {$$};
  \node[state, minimum size=1mm] (I_18)[right of=I_16, fill=black] {$$};
   \node[state, minimum size=1mm] (I_19)[above of=I_16, fill=black] {$$};
   \node[state, minimum size=1mm] (I_20)[right of=I_18, fill=black] {$$};
 \node(L_5)[below right of=I_18] { $\small{G_{6}}$};
  \path (I_16) edge (I_17);
 \path (I_16) edge (I_18);
 \path (I_17) edge (I_18);
 \path (I_16) edge (I_19);
 \path (I_16) edge (I_20);
 \end{tikzpicture}
 \begin{tikzpicture}[node distance=0.85cm, inner sep=0pt]
 \node[state, minimum size=1mm] (I_21)[below of=I, fill=black] {$$};
 \node[state, minimum size=1mm] (I_22)[below of=I_21, fill=black] {$$};
  \node[state, minimum size=1mm] (I_23)[right of=I_22, fill=black] {$$};
   \node[state, minimum size=1mm] (I_24)[above of=I_21, fill=black] {$$};
   \node[state, minimum size=1mm] (I_25)[left of=I_22, fill=black] {$$};
    \node[state, minimum size=1mm] (I_26)[right of=I_23, fill=black] {$$};
 \node(L)[below right of=I_23] { $\small{G_{7}}$};
  \path (I_21) edge (I_22);
 \path (I_21) edge (I_23);
 \path (I_22) edge (I_23);
 \path (I_21) edge (I_24);
 \path (I_22) edge (I_25);
 \path (I_23) edge (I_26);
\end{tikzpicture}
\begin{tikzpicture}[node distance=0.85cm, inner sep=0pt]
 \node[state, minimum size=1mm] (I_27)[below of=I_5, fill=black] {$$};
 \node[state, minimum size=1mm] (I_28)[below of=I_27, fill=black] {$$};
  \node[state, minimum size=1mm] (I_29)[right of=I_28, fill=black] {$$};
    \node[state, minimum size=1mm] (I_30)[right of=I_29, fill=black] {$$};
     \node[state, minimum size=1mm] (I_31)[above of=I_30, fill=black] {$$};
 \node(L)[below right of=I_29] { $\small{G_{8}}$};
  \path (I_27) edge (I_28);
 \path (I_27) edge (I_29);
 \path (I_28) edge (I_29);
 \path (I_29) edge (I_30);
 \path (I_30) edge (I_31);
 \end{tikzpicture}
 \begin{tikzpicture}[node distance=0.85cm, inner sep=0pt]
  \node[state, minimum size=1mm] (I_32)[right of=I_31, fill=black] {$$};
 \node[state, minimum size=1mm] (I_33)[right of=I_32, fill=black] {$$};
  \node[state, minimum size=1mm] (I_34)[below of=I_32, fill=black] {$$};
   \node[state, minimum size=1mm] (I_35)[right of=I_34, fill=black] {$$};
   \node[state, minimum size=1mm] (I_36)[right of=I_35, fill=black] {$$};
    \node(L)[below right of=I_34] { $\small{G_{9}}$};
  \path (I_32) edge (I_33);
 \path (I_34) edge (I_35);
 \path (I_32) edge (I_34);
 \path (I_33) edge (I_35);
 \path (I_35) edge (I_36);
 \end{tikzpicture}
\begin{tikzpicture}[node distance=0.85cm, inner sep=0pt]
 \node[state, minimum size=1mm] (I_37)[below of=I_13, fill=black] {$$};
 \node[state, minimum size=1mm] (I_38)[right of=I_37, fill=black] {$$};
  \node[state, minimum size=1mm] (I_39)[below of=I_37, fill=black] {$$};
   \node[state, minimum size=1mm] (I_40)[right of=I_39, fill=black] {$$};
   \node[state, minimum size=1mm] (I_41)[right of=I_38, fill=black] {$$};
     \node[state, minimum size=1mm] (I_42)[right of=I_40, fill=black] {$$};
      \node(L)[below right of=I_39] { $\small{G_{10}}$};
  \path (I_37) edge (I_38);
 \path (I_39) edge (I_40);
 \path (I_37) edge (I_39);
 \path (I_38) edge (I_40);
 \path (I_38) edge (I_41);
 \path (I_40) edge (I_42);
 \end{tikzpicture}
 \begin{tikzpicture}[node distance=0.85cm, inner sep=0pt]
 \node[state, minimum size=1mm] (I_43)[below of=I_14, fill=black] {$$};
 \node[state, minimum size=1mm] (I_44)[right of=I_43, fill=black] {$$};
  \node[state, minimum size=1mm] (I_45)[below of=I_43, fill=black] {$$};
   \node[state, minimum size=1mm] (I_46)[right of=I_45, fill=black] {$$};
   \node[state, minimum size=1mm] (I_47)[left of=I_43, fill=black] {$$};
     \node[state, minimum size=1mm] (I_48)[right of=I_46, fill=black] {$$};
      \node(L)[below right of=I_45] { $\small{G_{11}}$};
  \path (I_43) edge (I_44);
 \path (I_45) edge (I_46);
 \path (I_43) edge (I_45);
 \path (I_44) edge (I_46);
 \path (I_43) edge (I_47);
 \path (I_46) edge (I_48);
 \end{tikzpicture}
 \begin{tikzpicture}[node distance=0.85cm, inner sep=0pt]
 \node[state, minimum size=1mm] (I_49)[below of=I_28, fill=black] {$$};
 \node[state, minimum size=1mm] (I_50)[right of=I_49, fill=black] {$$};
  \node[state, minimum size=1mm] (I_51)[below of=I_49, fill=black] {$$};
   \node[state, minimum size=1mm] (I_52)[right of=I_51, fill=black] {$$};
   \node[state, minimum size=1mm] (I_53)[below right of=I_51, fill=black] {$$};
     \node[state, minimum size=1mm] (I_54)[right of=I_52, fill=black] {$$};
      \node(L)[below right of=I_52] { $\small{G_{12}}$};
  \path (I_49) edge (I_50);
 \path (I_49) edge (I_51);
 \path (I_50) edge (I_52);
 \path (I_51) edge (I_53);
 \path (I_52) edge (I_53);
 \path (I_52) edge (I_54);
 \end{tikzpicture} 
 \begin{center}
 \caption { Graphs belonging to $\Theta $}
 \label{f2}
\end{center}
\end{figure}
\begin{theorem}
\label{the-char-n}
For any connected unicyclic graph $G$ of order $n$ (and hence size $m=n$), we have
$EC(G)=m$ if and only if $G\in\Theta$.
\end{theorem}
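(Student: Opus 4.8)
\emph{Reduction.} My plan is to convert $EC(G)=m$ into a condition about covering $G$ by the endpoints of two edges, verify it on each family in $\Theta$, and then show it forces $G\in\Theta$. Because $G$ is connected and unicyclic we have $m=n$, and by Corollary~\ref{cor6} always $EC(G)\le m$. Since the only edge partition into $m$ blocks is the singleton partition $\pi_1$ of Definition~\ref{def3}, we get $EC(G)=m$ precisely when $\pi_1$ is an $ec$-partition: every edge $e$ is either a full edge or forms an edge coalition with some other non-full edge $f$. The device I would isolate as a lemma is that, for edges $e=uv$ and $f=xy$, the pair $\{e,f\}$ is an edge-dominating set of $G$ if and only if $\{u,v,x,y\}$ is a vertex cover of $G$; indeed an edge $g$ is dominated by $\{e,f\}$ exactly when $g$ has an endpoint in $\{u,v,x,y\}$. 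Hence $EC(G)=m$ holds iff for every non-full edge $e=uv$ the edges left undominated by $e$ (those avoiding both $u$ and $v$) can be covered by the two endpoints of a single further non-full edge $f$.

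\emph{Sufficiency.} For each family (a)--(e) I would exhibit, for every non-full edge $e$, an explicit partner $f$ through the vertex-cover criterion; by the reduction this makes $\pi_1$ an $ec$-partition, so $EC(G)=m$. In $C_n$ with $3\le n\le 6$ one pairs each edge with a roughly opposite edge, whose four endpoints cover $C_n$ exactly because $n\le 6$. For the triangle-based graphs (b), (c) the point is that a single triangle edge already covers all three triangle edges, so each star-center or double-star edge can be paired with a suitable triangle edge; for the admitted $C_4$ and $C_5$ attachments one pairs each edge with a cycle edge whose endpoints, together with those of $e$, cover the short cycle and contain every star center.

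\emph{Necessity.} Assume $EC(G)=m$ and let the unique cycle $C$ have length $g$. Taking any non-full edge $e$ and its partner $f$, the pair $\{e,f\}$ dominates all $g$ cycle edges; but a single edge dominates at most three cycle edges (a cycle edge dominates itself and its two neighbours, a branch edge at most two), so $g\le 6$. I would then pin down the attachments by testing the undominated set $E(G)\setminus N[e]$ for well-chosen edges $e$, the point being that this set must be coverable by two \emph{adjacent} vertices. A branch reaching cycle-distance $\ge 2$ (beyond the double star allowed at a triangle) makes the cycle-to-branch edge leave an undominated cycle edge together with an undominated deep edge, which no two adjacent vertices can cover; and placing star centers (or deep branches) at cycle vertices that are mutually non-adjacent produces an edge $e$ whose only possible partner $f$ would need two prescribed non-adjacent vertices as its endpoints. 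Ruling these out case by case gives: $g=6$ forces no attachment, $g=5$ exactly one star, $g=4$ at most two stars at its vertices, and $g=3$ the star and double-star configurations of (b), (c); thus $G\in\Theta$.

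\emph{Main obstacle.} The difficult direction is necessity, and the difficulty is not any individual computation but the completeness of the case analysis: every admissible attachment pattern must be shown to appear in $\Theta$, and every inadmissible one excluded by producing a concrete bottleneck edge $e$ whose undominated set is not coverable by an adjacent vertex pair. Making the two obstructions---distant undominated edges from a deep branch, and non-adjacent forced partner endpoints---exhaustive across $g\in\{3,4,5\}$, while correctly allowing the (sometimes surprising) configurations that do survive, such as two stars on $C_4$ at either adjacent or opposite vertices, is where the bulk of the work lies.
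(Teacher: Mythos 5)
Your proposal follows essentially the same route as the paper: both reduce $EC(G)=m$ to the requirement that the singleton edge partition be an $ec$-partition, and then carry out an exhaustive case analysis on the structure of the unicyclic graph (the paper parametrizes by the length of a longest path, you by the cycle length together with the depth of the attached trees, which amounts to the same enumeration and yields the same admissible families). Your explicit criterion that $\{e,f\}$ is edge dominating iff the four endpoints form a vertex cover is a clean way to organize the verifications the paper performs implicitly through its figures and dashed ``partnerless'' edges, and, like the paper, you correctly locate the real burden in the completeness of the case analysis, which neither write-up fully discharges in detail.
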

\begin{proof}
If $G\in \Theta$, it is straightforward to verify that we have $EC(G)=m$. Conversely, assume that $EC(G)=n=m$.
Let $l$ be the length of the longest path $P$ in $G$, and let $P=(v_1, v_2, \ldots, v_{l+1})$.
For $EC(G)=m$, the singleton edge partition must be an $ec$-partition. We first show that  $l\leq 5$.\\

Suppose $l\geq 6$. Then we have $P=v_1v_2v_3v_4v_5v_6v_7\cdots$. 
Since $G$ is unicyclic, exactly one cycle is present. We distinguish the following cases according to the position of this cycle relative to the longest path $P$.

(i) If the vertices $v_1$ and $v_k$, where $k\ge 7$, are adjacent, then $G$ contains a cycle $C_k$.
By Observation~\ref{obs11}, we have $EC(C_k)\leq 6$.
Now, if $G=C_k$, it is obvious that $EC(G)\neq n$.
If $n\ge k+1$, then $G$ contains at least one pendant edge. A direct inspection shows that such a pendant edge cannot form an edge coalition with any other edge of $G$. 

(ii) If the vertices $v_1$ and $v_6$ are adjacent, then $G$ consists of a single cycle $C_6$ and at least one pendant edge $e$.
A direct inspection shows that the pendant edge $e$ cannot form an edge coalition with any other edge of $G$.

(iii) If the vertices $v_1$ and $v_5$ are adjacent, then $G$ consists of a single cycle $C_5$ and at least one hanging path $P_3$ or at least two pendant edges. In this case, at least one of the pendant edges or one of the edges of the cycle $C_5$ has no edge coalition in $G$.
Thus, $EC(G)\neq n$. 
An analogous argument shows that if the vertex   $v_1$ is adjacent to $v_4$ or $v_3$, respectively, and $l\ge 6$, then at least one of the pendant edges has no edge coalition in $G$. Therefore, every unicyclic graph with $EC(G)=m$ must satisfy $l\le5$.
We now distinguish the remaining possibilities according to the value of $l$.

It remains to show that if a unicyclic graph $G$ with $l\le5$ does not belong to $\Theta$, then $EC(G)\neq m$.

Suppose that $l=5$. Then it must be one of the following graphs in Figure~\ref{f3}, 
where any pendant edge $e$ incident to a vertex $v$ in the cycle may be replaced by a star with center $v$. 
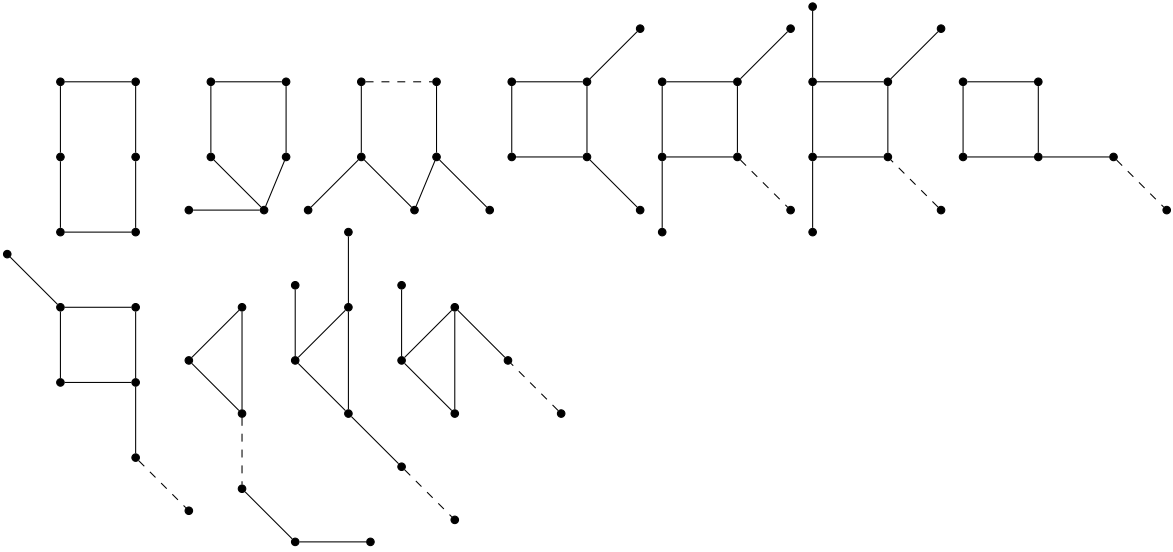
\begin{figure}[!htp]
 \begin{tikzpicture}[scale=0.9,node distance=0.9cm, inner sep=0pt]
  \node[state, minimum size=1mm] (I_6)[right of=I_2, fill=black] {$$};
 \node[state, minimum size=1mm] (I_7)[right of=I_6, fill=black] {$$};
  \node[state, minimum size=1mm] (I_8)[below of=I_6, fill=black] {$$};
   \node[state, minimum size=1mm] (I_9)[right of=I_8, fill=black] {$$};
  \node[state, minimum size=1mm] (I_10)[below of=I_8, fill=black] {$$};
  \node[state, minimum size=1mm] (I_11)[below of=I_9, fill=black] {$$};
  \path (I_6) edge (I_7);
 \path (I_10) edge (I_11);
 \path (I_6) edge (I_8);
 \path (I_7) edge (I_9);
 \path (I_8) edge (I_10);
 \path (I_9) edge (I_11);
  \node[state, minimum size=1mm] (I_12)[right of=I_7, fill=black] {$$};
 \node[state, minimum size=1mm] (I_13)[right of=I_12, fill=black] {$$};
  \node[state, minimum size=1mm] (I_14)[below of=I_12, fill=black] {$$};
   \node[state, minimum size=1mm] (I_15)[below of=I_13, fill=black] {$$};
  \node[state, minimum size=1mm] (I_16)[below right of=I_14, fill=black] {$$};
  \node[state, minimum size=1mm] (I_17)[left of=I_16, fill=black] {$$};
  \path (I_12) edge (I_13);
 \path (I_12) edge (I_14);
 \path (I_13) edge (I_15);
 \path (I_14) edge (I_16);
 \path (I_15) edge (I_16);
 \path (I_16) edge (I_17);
  \node[state, minimum size=1mm] (I_18)[right of=I_13, fill=black] {$$};
 \node[state, minimum size=1mm] (I_19)[right of=I_18, fill=black] {$$};
 \node[state, minimum size=1mm] (I_20)[below  of=I_18, fill=black] {$$};
  \node[state, minimum size=1mm] (I_21)[below of=I_19, fill=black] {$$};
  \node[state, minimum size=1mm] (I_22)[below right of=I_20, fill=black] {$$};
  \node[state, minimum size=1mm] (I_23)[below right of=I_21, fill=black] {$$};
   \node[state, minimum size=1mm] (I_24)[below left of=I_20, fill=black] {$$};
\path[dashed](I_18)edge(I_19);
 \path (I_18) edge (I_20);
 \path (I_19) edge (I_21);
 \path (I_20) edge (I_22);
 \path (I_21) edge (I_22);
 \path (I_20) edge (I_24);
 \path (I_21) edge (I_23);
  \node[state, minimum size=1mm] (I_25)[right of=I_19, fill=black] {$$};
 \node[state, minimum size=1mm] (I_26)[right of=I_25, fill=black] {$$};
  \node[state, minimum size=1mm] (I_27)[below of=I_25, fill=black] {$$};
   \node[state, minimum size=1mm] (I_28)[below of=I_26, fill=black] {$$};
  \node[state, minimum size=1mm] (I_29)[above right of=I_26, fill=black] {$$};
  \node[state, minimum size=1mm] (I_30)[below right of=I_28, fill=black] {$$};
  \path (I_25) edge (I_26);
 \path (I_27) edge (I_28);
 \path (I_25) edge (I_27);
 \path (I_26) edge (I_28);
 \path (I_29) edge (I_26);
 \path (I_30) edge (I_28);
  \node[state, minimum size=1mm] (I_31)[right of=I_26, fill=black] {$$};
 \node[state, minimum size=1mm] (I_32)[right of=I_31, fill=black] {$$};
  \node[state, minimum size=1mm] (I_33)[below of=I_31, fill=black] {$$};
   \node[state, minimum size=1mm] (I_34)[below of=I_32, fill=black] {$$};
  \node[state, minimum size=1mm] (I_35)[above right of=I_32, fill=black] {$$};
  \node[state, minimum size=1mm] (I_36)[below right of=I_34, fill=black] {$$};
   \node[state, minimum size=1mm] (I_37)[below of=I_33, fill=black] {$$};

  \path (I_31) edge (I_32);
 \path (I_31) edge (I_33);
 \path (I_32) edge (I_34);
 \path (I_33) edge (I_34);
 \path (I_32) edge (I_35);
 \path[dashed] (I_34) edge (I_36);
 \path (I_37) edge (I_33);

  \node[state, minimum size=1mm] (I_38)[right of=I_32, fill=black] {$$};
 \node[state, minimum size=1mm] (I_39)[right of=I_38, fill=black] {$$};
  \node[state, minimum size=1mm] (I_40)[below of=I_38, fill=black] {$$};
   \node[state, minimum size=1mm] (I_41)[below of=I_39, fill=black] {$$};
  \node[state, minimum size=1mm] (I_42)[above right of=I_39, fill=black] {$$};
  \node[state, minimum size=1mm] (I_43)[below right of=I_41, fill=black] {$$};
   \node[state, minimum size=1mm] (I_44)[below of=I_40, fill=black] {$$};
\node[state, minimum size=1mm] (I_45)[above of=I_38, fill=black] {$$};
  \path (I_38) edge (I_39);
 \path (I_38) edge (I_40);
 \path (I_40) edge (I_41);
 \path (I_39) edge (I_41);
 \path (I_42) edge (I_39);
 \path[dashed] (I_43) edge (I_41);
 \path (I_45) edge (I_38);
 \path (I_44) edge (I_40);

  \node[state, minimum size=1mm] (I_46)[right of=I_39, fill=black] {$$};
 \node[state, minimum size=1mm] (I_47)[right of=I_46, fill=black] {$$};
  \node[state, minimum size=1mm] (I_48)[below of=I_46, fill=black] {$$};
   \node[state, minimum size=1mm] (I_49)[below of=I_47, fill=black] {$$};
  \node[state, minimum size=1mm] (I_50)[right of=I_49, fill=black] {$$};
 \node[state, minimum size=1mm] (I_51)[below right of=I_50, fill=black] {$$};
  \path (I_46) edge (I_47);
 \path (I_48) edge (I_49);
 \path (I_46) edge (I_48);
 \path (I_47) edge (I_49);
 \path (I_49) edge (I_50);
 \path[dashed] (I_50) edge (I_51);

  \node[state, minimum size=1mm] (I_52)[below of=I_10, fill=black] {$$};
 \node[state, minimum size=1mm] (I_53)[right of=I_52, fill=black] {$$};
  \node[state, minimum size=1mm] (I_54)[below of=I_52, fill=black] {$$};
   \node[state, minimum size=1mm] (I_55)[below of=I_53, fill=black] {$$};
  \node[state, minimum size=1mm] (I_56)[below of=I_55, fill=black] {$$};
  \node[state, minimum size=1mm] (I_57)[below right of=I_56, fill=black] {$$};
\node[state, minimum size=1mm] (I_58)[above left of=I_52, fill=black] {$$};
  \path (I_52) edge (I_53);
 \path (I_54) edge (I_55);
 \path (I_52) edge (I_54);
 \path (I_53) edge (I_55);
 \path (I_55) edge (I_56);
 \path[dashed] (I_56) edge (I_57);
  \path (I_52) edge (I_58);

   \node[state, minimum size=1mm] (I_59)[below right of=I_53, fill=black] {$$};
 \node[state, minimum size=1mm] (I_60)[above right of=I_59, fill=black] {$$};
  \node[state, minimum size=1mm] (I_61)[below right of=I_59, fill=black] {$$};
   \node[state, minimum size=1mm] (I_62)[below of=I_61, fill=black] {$$};
  \node[state, minimum size=1mm] (I_63)[below right of=I_62, fill=black] {$$};
  \node[state, minimum size=1mm] (I_64)[right of=I_63, fill=black] {$$};

  \path (I_59) edge (I_60);
 \path (I_59) edge (I_61);
 \path (I_60) edge (I_61);
 \path[dashed] (I_61) edge (I_62);
 \path (I_62) edge (I_63);
 \path (I_63) edge (I_64);

  \node[state, minimum size=1mm] (I_65)[below right of=I_60, fill=black] {$$};
 \node[state, minimum size=1mm] (I_66)[above right of=I_65, fill=black] {$$};
  \node[state, minimum size=1mm] (I_67)[below right of=I_65, fill=black] {$$};
   \node[state, minimum size=1mm] (I_68)[above of=I_65, fill=black] {$$};
  \node[state, minimum size=1mm] (I_69)[above of=I_66, fill=black] {$$};
  \node[state, minimum size=1mm] (I_70)[below right of=I_67, fill=black] {$$};
  \node[state, minimum size=1mm] (I_71)[below right of=I_70, fill=black] {$$};

  \path (I_65) edge (I_66);
 \path (I_65) edge (I_67);
 \path (I_66) edge (I_67);
 \path (I_68) edge (I_65);
 \path (I_70) edge (I_67);
 \path[dashed] (I_70) edge (I_71);
 \path (I_69) edge (I_66);

  \node[state, minimum size=1mm] (I_72)[below right of=I_66, fill=black] {$$};
 \node[state, minimum size=1mm] (I_73)[above right of=I_72, fill=black] {$$};
  \node[state, minimum size=1mm] (I_74)[below right of=I_72, fill=black] {$$};
   \node[state, minimum size=1mm] (I_75)[above of=I_72, fill=black] {$$};
   \node[state, minimum size=1mm] (I_76)[below right of=I_73, fill=black] {$$};
  \node[state, minimum size=1mm] (I_77)[below right of=I_76, fill=black] {$$};
  \path (I_72) edge (I_73);
 \path (I_72) edge (I_74);
 \path (I_73) edge (I_74);
 \path (I_72) edge (I_75);
 \path (I_76) edge (I_73);
 \path[dashed] (I_77) edge (I_76);
 \end{tikzpicture}
\caption{ All unicyclic graphs in which the maximum path length is $5$.}
\label{f3}
\end{figure}
These are precisely the unicyclic graphs whose longest path has length $5$.
In each of the graphs that is not in $\Theta$, we illustrate by a dashed edge an edge that does not form a coalition with any other edge, implying that for these graphs, $EC(G)\neq n$.

If $l=4$, then $G$ is one of the graphs shown in Figure~\ref{f4},
where any pendant edge $e$ incident to a vertex $v$  may be replaced by a star centered at $v$.
 These graphs are all unicyclic with the  maximum path length being $4$. 
Hence every unicyclic graph with longest path of length four belongs to the family $\Theta$.
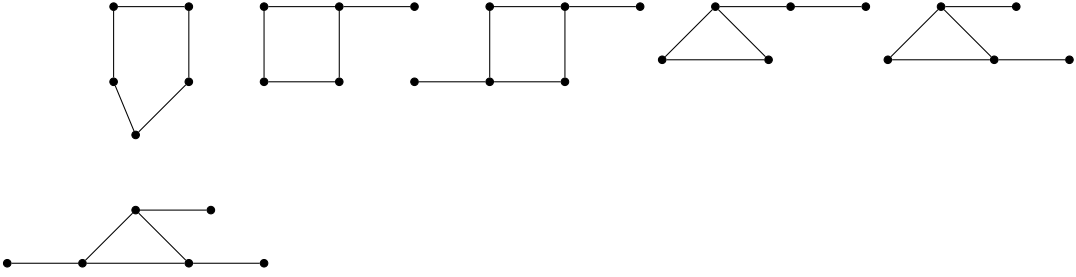
\begin{figure}[!htp]
  \begin{tikzpicture}[node distance=0.9cm, inner sep=0pt]
  \node[state, minimum size=1mm] (I_1)[fill=black] {$$};
 \node[state, minimum size=1mm] (I_2)[right of=I_1, fill=black] {$$};
  \node[state, minimum size=1mm] (I_3)[below of=I_1, fill=black] {$$};
   \node[state, minimum size=1mm] (I_4)[below of=I_2, fill=black] {$$};
  \node[state, minimum size=1mm] (I_5)[below left of=I_4, fill=black] {$$};
  \path (I_1) edge (I_2);
 \path (I_3) edge (I_1);
 \path (I_4) edge (I_2);
 \path (I_3) edge (I_5);
 \path (I_4) edge (I_5);

  \node[state, minimum size=1mm] (I_6)[right of=I_2,fill=black] {$$};
 \node[state, minimum size=1mm] (I_7)[right of=I_6, fill=black] {$$};
  \node[state, minimum size=1mm] (I_8)[below of=I_6, fill=black] {$$};
   \node[state, minimum size=1mm] (I_9)[below of=I_7, fill=black] {$$};
  \node[state, minimum size=1mm] (I_10)[right of=I_7, fill=black] {$$};

  \path (I_6) edge (I_7);
 \path (I_8) edge (I_9);
 \path (I_6) edge (I_8);
 \path (I_7) edge (I_9);
 \path (I_10) edge (I_7);

  \node[state, minimum size=1mm] (I_11)[right of=I_10,fill=black] {$$};
 \node[state, minimum size=1mm] (I_12)[right of=I_11, fill=black] {$$};
  \node[state, minimum size=1mm] (I_13)[below of=I_11, fill=black] {$$};
   \node[state, minimum size=1mm] (I_14)[below of=I_12, fill=black] {$$};
  \node[state, minimum size=1mm] (I_15)[right of=I_12, fill=black] {$$};
  \node[state, minimum size=1mm] (I_16)[left of=I_13, fill=black] {$$};
  \path (I_11) edge (I_12);
 \path (I_13) edge (I_14);
 \path (I_11) edge (I_13);
 \path (I_12) edge (I_14);
 \path (I_15) edge (I_12);
\path (I_16) edge (I_13);

 \node[state, minimum size=1mm] (I_17)[right of=I_15,fill=black] {$$};
 \node[state, minimum size=1mm] (I_18)[below left of=I_17, fill=black] {$$};
  \node[state, minimum size=1mm] (I_19)[below right of=I_17, fill=black] {$$};
   \node[state, minimum size=1mm] (I_20)[right of=I_17, fill=black] {$$};
  \node[state, minimum size=1mm] (I_21)[right of=I_20, fill=black] {$$};

  \path (I_17) edge (I_18);
 \path (I_17) edge (I_19);
 \path (I_18) edge (I_19);
 \path (I_17) edge (I_20);
 \path (I_20) edge (I_21);

 \node[state, minimum size=1mm] (I_22)[right of=I_21,fill=black] {$$};
 \node[state, minimum size=1mm] (I_23)[below left of=I_22, fill=black] {$$};
  \node[state, minimum size=1mm] (I_24)[below right of=I_22, fill=black] {$$};
   \node[state, minimum size=1mm] (I_25)[right of=I_24, fill=black] {$$};
  \node[state, minimum size=1mm] (I_26)[right of=I_22, fill=black] {$$};

  \path (I_22) edge (I_23);
 \path (I_22) edge (I_24);
 \path (I_23) edge (I_24);
 \path (I_25) edge (I_24);
 \path (I_26) edge (I_22);

  \node[state, minimum size=1mm] (I_27)[below of=I_5,fill=black] {$$};
 \node[state, minimum size=1mm] (I_28)[below left of=I_27, fill=black] {$$};
  \node[state, minimum size=1mm] (I_29)[below right of=I_27, fill=black] {$$};
   \node[state, minimum size=1mm] (I_30)[right of=I_27, fill=black] {$$};
  \node[state, minimum size=1mm] (I_31)[right of=I_29, fill=black] {$$};
   \node[state, minimum size=1mm] (I_32)[left of=I_28, fill=black] {$$};

  \path (I_27) edge (I_28);
 \path (I_27) edge (I_29);
 \path (I_28) edge (I_29);
 \path (I_30) edge (I_27);
 \path (I_31) edge (I_29);
 \path (I_32) edge (I_28);
 \end{tikzpicture}
\caption{All unicyclic graphs in which the maximum path length is  $4$.}
\label{f4}
\end{figure}
If $l=3$, then $G$ is either $C_4$ or is obtained from  $C_3$ by identifying one vertex of $C_3$ with the center of a star.
If $l=2$, then $G$ is $C_3$. 
Therefore, every connected unicyclic graph satisfying $EC(G)=m$ belongs to the family $\Theta$. This completes the proof.
\end{proof}
%
\subsection{Trees}
Let $T$ be a tree of order $n$. Then $|E(T)|=m=n-1$.
\begin{theorem}
\label{the-t_n-1}
Let $T$ be a tree of order $n$ and size $m$. Then $EC(T)=m=n-1$   if and only if $T\in\Phi$, 
where $\Phi$ denotes the family consisting of all trees of diameter at most $3$, together with all trees of diameter $4$ whose central vertex on every longest path has degree $2$, as illustrated in Figure~\ref{f5}.
\end{theorem}
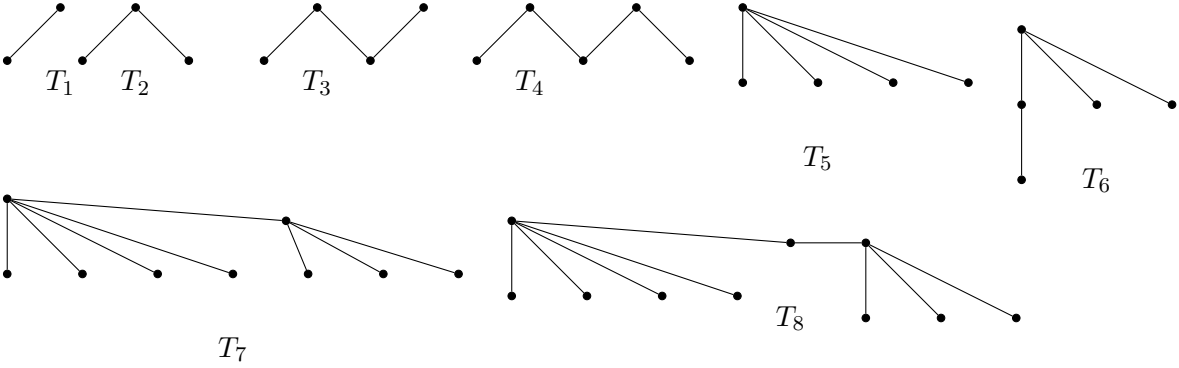
\begin{figure}[!htp]
 \begin{tikzpicture}[node distance=0.9cm, inner sep=0pt]

  \node[state, minimum size=1mm] (J_1)[fill=black] {$$};
 \node[state, minimum size=1mm] (J_2)[below left of=J_1, fill=black] {$$};
 \node(L)[below of=J_1] { $\small{T_{1}}$};
  \path (J_1) edge (J_2);

  \node[state, minimum size=1mm] (J_2)[right of=J_1, fill=black] {$$};
 \node[state, minimum size=1mm] (J_3)[below left of=J_2, fill=black] {$$};
 \node[state, minimum size=1mm] (J_4)[below right of=J_2, fill=black] {$$};
  \node(L_1)[below of=J_2] { $\small{T_{2}}$};
  \path (J_2) edge (J_3);
  \path (J_2) edge (J_4);
   \node[state, minimum size=1mm] (J_5)[right of=J_4, fill=black] {$$};
 \node[state, minimum size=1mm] (J_6)[above right of=J_5, fill=black] {$$};
 \node[state, minimum size=1mm] (J_7)[below right of=J_6, fill=black] {$$};
  \node[state, minimum size=1mm] (J_8)[above right of=J_7, fill=black] {$$};
  \node(L_2)[below of=J_6] { $\small{T_{3}}$};
  \path (J_5) edge (J_6);
  \path (J_6) edge (J_7);
  \path (J_7) edge (J_8);
  \node[state, minimum size=1mm] (J_9)[below right of=J_8, fill=black] {$$};
 \node[state, minimum size=1mm] (J_10)[above right of=J_9, fill=black] {$$};
 \node[state, minimum size=1mm] (J_11)[below right of=J_10, fill=black] {$$};
  \node[state, minimum size=1mm] (J_12)[above right of=J_11, fill=black] {$$};
  \node[state, minimum size=1mm] (J_13)[below right of=J_12, fill=black] {$$};
    \node(L_3)[below of=J_10] { $\small{T_{4}}$};
  \path (J_9) edge (J_10);
  \path (J_10) edge (J_11);
  \path (J_11) edge (J_12);
 \path (J_12) edge (J_13);
 \node[state, minimum size=1mm] (J_14)[above right of=J_13, fill=black] {$$};
 \node[state, minimum size=1mm] (J_15)[below of=J_14, fill=black] {$$};
 \node[state, minimum size=1mm] (J_16)[right of=J_15, fill=black] {$$};
  \node[state, minimum size=1mm] (J_17)[right of=J_16, fill=black] {$$};
  \node[state, minimum size=1mm] (J_18)[right of=J_17, fill=black] {$$};
    \node(L_4)[below of=J_16] { $\small{T_{5}}$};
  \path (J_14) edge (J_15);
  \path (J_14) edge (J_16);
  \path (J_14) edge (J_17);
 \path (J_14) edge (J_18);

   \node[state, minimum size=1mm] (J_38)[above right of=J_18, fill=black] {$$};
 \node[state, minimum size=1mm] (J_39)[below of=J_38, fill=black] {$$};
 \node[state, minimum size=1mm] (J_40)[right of=J_39, fill=black] {$$};
  \node[state, minimum size=1mm] (J_41)[right of=J_40, fill=black] {$$};
  \node[state, minimum size=1mm] (J_42)[below of=J_39, fill=black] {$$};
  \node(L_5)[below of=J_40] { $\small{T_{6}}$};
  \path (J_38) edge (J_39);
  \path (J_38) edge (J_40);
  \path (J_38) edge (J_41);
 \path (J_39) edge (J_42);
 \end{tikzpicture}

  \begin{tikzpicture}[node distance=0.9cm, inner sep=0pt]

  \node[state, minimum size=1mm] (J_19)[fill=black] {$$};
 \node[state, minimum size=1mm] (J_20)[below of=J_19, fill=black] {$$};
 \node[state, minimum size=1mm] (J_21)[right of=J_20, fill=black] {$$};
  \node[state, minimum size=1mm] (J_22)[right of=J_21, fill=black] {$$};
  \node[state, minimum size=1mm] (J_23)[right of=J_22, fill=black] {$$};
   \node[state, minimum size=1mm] (J_24)[above right of=J_23, fill=black] {$$};
  \node[state, minimum size=1mm] (J_25)[right of=J_23, fill=black] {$$};
  \node[state, minimum size=1mm] (J_26)[right of=J_25, fill=black] {$$};
   \node[state, minimum size=1mm] (J_27)[right of=J_26, fill=black] {$$};
  \node(L)[below of=J_23] { $\small{T_{7}}$};
  \path (J_19) edge (J_20);
  \path (J_19) edge (J_21);
  \path (J_19) edge (J_22);
 \path (J_19) edge (J_23);
   \path (J_24) edge (J_19);
  \path (J_24) edge (J_25);
  \path (J_24) edge (J_26);
 \path (J_24) edge (J_27);

  \node[state, minimum size=1mm] (J_28)[above right of=J_27, fill=black] {$$};
 \node[state, minimum size=1mm] (J_29)[below of=J_28, fill=black] {$$};
 \node[state, minimum size=1mm] (J_30)[right of=J_29, fill=black] {$$};
  \node[state, minimum size=1mm] (J_31)[right of=J_30, fill=black] {$$};
  \node[state, minimum size=1mm] (J_32)[right of=J_31, fill=black] {$$};
   \node[state, minimum size=1mm] (J_33)[above right of=J_32, fill=black] {$$};
  \node[state, minimum size=1mm] (J_34)[right of=J_33, fill=black] {$$};
  \node[state, minimum size=1mm] (J_35)[below of=J_34, fill=black] {$$};
   \node[state, minimum size=1mm] (J_36)[right of=J_35, fill=black] {$$};
 \node[state, minimum size=1mm] (J_37)[right of=J_36, fill=black] {$$};
   \node(L_1)[below of=J_33] { $\small{T_{8}}$};
  \path (J_28) edge (J_29);
  \path (J_28) edge (J_30);
  \path (J_28) edge (J_31);
 \path (J_28) edge (J_32);
   \path (J_33) edge (J_28);
  \path (J_33) edge (J_34);
  \path (J_34) edge (J_35);
 \path (J_34) edge (J_36);
 \path (J_34) edge (J_37);
\end{tikzpicture}
\begin{center}
\caption{ Some trees that are in $\Phi$ }  
\label{f5}
\end{center}
\end{figure}
\begin{proof}
In every tree $T$, we have $m=n-1$. Thus, it is clear that $EC(T)\leq m=n-1$.
If $T\in \Phi$, it follows directly that $EC(T)=m$. Conversely, assume that $EC(T)=m$.

Let $l$ be the length of a longest path $P$ in $T$,  where  $P=v_1v_2\ldots v_{l+1}$.
We claim that $l\leq4$.

Suppose $l\geq5$. Then  $P=v_1v_2v_3v_4v_5v_6\cdots v_l$.
So there are at least five edges $e_1,e_2,e_3,e_4$, and $e_5$ in the tree $T$.

Hence, the middle edge $e_3$ cannot form an edge coalition with any other edge of $T$.
Therefore, the singleton edge partition
$X=\{\{e_1\},\{e_2\},\{e_3\},\{e_4\},\cdots,\{e_m\}\}$
is not an edge coalition partition of $T$.
Consequently, $EC(T)\leq m-1$, which leads to a contradiction.
This contradiction proves that $l\le4$.

For $l=4$, there exists a path
$P=v_1v_2v_3v_4v_5$   in $T$ with length $4$.
Suppose that there exists a vertex
$w\in N_T(v_3)\setminus\{v_2,v_4\}$.
Then the edge $e=wv_3$ cannot form an edge coalition with any other edge of $T$.
On the other hand, since the length of a longest path in $T$ is $4$, we have
$N_T(v_1)=\{v_2\}$ and  $N_T(v_5)=\{v_4\}$.
Hence, every tree with longest path of length $4$ satisfying $EC(T)=m$
must satisfy $deg(v_3)=2$, that is, $v_3$ is the central vertex of every longest path,
For $l=3$, the tree $T$ is a double star, such as $T_3$, $T_6$, and $T_7$ in Figure~\ref{f5}.
For $l=2$, the tree $T$ is  $K_{1,n-1}$,
such as  $T_2$ and $T_5$ in Figure~\ref{f5}.
For $l=1$, the tree $T$ is $P_2$.
Therefore, $EC(T)=m$ if and only if $T\in\Phi$, completing the proof.
\end{proof}
%
\subsection{Connected Graphs with Edge Coalition Number Equal to Their Size}
Let $G$ be a connected graph of order $n$ and size $m$.
Throughout this subsection, we characterize the graphs satisfying $EC(G)=m$.
Let $\Psi$ denote the family of connected graphs that are neither trees nor unicyclic graphs, shown in Figure~\ref{f6}.
\begin{figure}[!htp]
 \begin{tikzpicture}[node distance=0.85cm, inner sep=0pt]

\node[state, minimum size=1mm] (F)[fill=black] {$$};
 \node[state, minimum size=1mm] (G)[right of=F, fill=black] {$$};
  \node[state, minimum size=1mm] (H)[below of=F, fill=black] {$$};
   \node[state, minimum size=1mm] (I)[below of=G, fill=black] {$$};
   \node(L_1)[below left of=I] { $\small{H_{1}}$};
  \path (F) edge (G);
 \path (H) edge (I);
 \path (F) edge (H);
 \path (G) edge (I);
 \path (F) edge (I);

\node[state, minimum size=1mm] (J_1)[right of=G, fill=black] {$$};
 \node[state, minimum size=1mm] (J_2)[right of=J_1, fill=black] {$$};
  \node[state, minimum size=1mm] (J_3)[below of=J_1, fill=black] {$$};
   \node[state, minimum size=1mm] (J_4)[below of=J_2, fill=black] {$$};
   \node(L_1)[below left of=J_4] { $\small{H_{2}}$};

  \path (J_1) edge (J_2);
 \path (J_1) edge (J_3);
 \path (J_2) edge (J_4);
 \path (J_3) edge (J_4);
 \path (J_1) edge (J_4);
\path (J_2) edge (J_3);

\node[state, minimum size=1mm] (J_22)[right of=J_2, fill=black] {$$};
 \node[state, minimum size=1mm] (J_23)[right of=J_22, fill=black] {$$};
  \node[state, minimum size=1mm] (J_24)[below of=J_22, fill=black] {$$};
   \node[state, minimum size=1mm] (J_25)[below of=J_23, fill=black] {$$};
 \node[state, minimum size=1mm] (J_26)[above right of=J_23, fill=black] {$$};
    \node(L_1)[below right of=J_24] { $\small{H_{3}}$};
  \path (J_22) edge (J_23);
 \path (J_22) edge (J_24);
 \path (J_23) edge (J_25);
 \path (J_24) edge (J_25);
 \path (J_23) edge (J_26);
 \path (J_22) edge (J_25);

\node[state, minimum size=1mm] (J_26)[right of=J_23, fill=black] {$$};
 \node[state, minimum size=1mm] (J_27)[right of=J_26, fill=black] {$$};
  \node[state, minimum size=1mm] (J_28)[below of=J_26, fill=black] {$$};
   \node[state, minimum size=1mm] (J_29)[below of=J_27, fill=black] {$$};
 \node[state, minimum size=1mm] (J_30)[above right of=J_27, fill=black] {$$};
 \node(L_1)[below left of=J_29] { $\small{H_{4}}$};

  \path (J_26) edge (J_27);
 \path (J_26) edge (J_28);
 \path (J_27) edge (J_29);
 \path (J_28) edge (J_29);
 \path (J_27) edge (J_28);
 \path (J_27) edge (J_30);

\node[state, minimum size=1mm] (J_31)[right of=J_27, fill=black] {$$};
 \node[state, minimum size=1mm] (J_32)[right of=J_31, fill=black] {$$};
  \node[state, minimum size=1mm] (J_33)[below of=J_31, fill=black] {$$};
   \node[state, minimum size=1mm] (J_34)[below of=J_32, fill=black] {$$};
 \node[state, minimum size=1mm] (J_35)[above of=J_31, fill=black] {$$};
 \node[state, minimum size=1mm] (J_36)[below right of=J_34, fill=black] {$$};
 \node(L_1)[below left of=J_34] { $\small{H_{5}}$};
  \path (J_31) edge (J_32);
 \path (J_31) edge (J_33);
 \path (J_33) edge (J_34);
 \path (J_32) edge (J_34);
 \path (J_34) edge (J_36);
 \path (J_31) edge (J_35);
 \path (J_31) edge (J_34);

\node[state, minimum size=1mm] (J_1)[right of=J_32, fill=black] {$$};
 \node[state, minimum size=1mm] (J_2)[right of=J_1, fill=black] {$$};
  \node[state, minimum size=1mm] (J_3)[below of=J_1, fill=black] {$$};
   \node[state, minimum size=1mm] (J_4)[below of=J_2, fill=black] {$$};
 \node[state, minimum size=1mm] (J_5)[above right of=J_2, fill=black] {$$};
 \node[state, minimum size=1mm] (J_6)[below right of=J_4, fill=black] {$$};
 \node(L_1)[below left of=J_4] { $\small{H_{6}}$};
  \path (J_1) edge (J_2);
 \path (J_1) edge (J_3);
 \path (J_2) edge (J_4);
 \path (J_3) edge (J_4);
 \path (J_2) edge (J_5);
 \path (J_4) edge (J_6);
 \path (J_2) edge (J_3);

\node[state, minimum size=1mm] (J_7)[right of=J_2, fill=black] {$$};
 \node[state, minimum size=1mm] (J_8)[right of=J_7, fill=black] {$$};
  \node[state, minimum size=1mm] (J_9)[below of=J_7, fill=black] {$$};
   \node[state, minimum size=1mm] (J_10)[below of=J_8, fill=black] {$$};
  \node[state, minimum size=1mm] (J_11)[above right of=J_8, fill=black] {$$};
   \node(L_1)[below left of=J_10] { $\small{H_{7}}$};
  \path (J_7) edge (J_8);
 \path (J_7) edge (J_9);
 \path (J_9) edge (J_10);
 \path (J_8) edge (J_10);
 \path (J_7) edge (J_10);
\path (J_8) edge (J_9);
\path (J_8) edge (J_11);

\node[state, minimum size=1mm] (J_12)[right of=J_8, fill=black] {$$};
 \node[state, minimum size=1mm] (J_13)[right of=J_12, fill=black] {$$};
  \node[state, minimum size=1mm] (J_14)[below of=J_12, fill=black] {$$};
   \node[state, minimum size=1mm] (J_15)[below of=J_13, fill=black] {$$};
  \node[state, minimum size=1mm] (J_16)[above right of=J_13, fill=black] {$$};
  \node[state, minimum size=1mm] (J_17)[below right of=J_15, fill=black] {$$};
     \node(L_1)[below left of=J_15] { $\small{H_{8}}$};
  \path (J_12) edge (J_13);
 \path (J_12) edge (J_14);
 \path (J_13) edge (J_15);
 \path (J_14) edge (J_15);
 \path (J_12) edge (J_15);
\path (J_13) edge (J_14);
\path (J_13) edge (J_16);
\path (J_15) edge (J_17);

\end{tikzpicture}
\begin{tikzpicture}[node distance=0.85cm, inner sep=0pt]

\node[state, minimum size=1mm] (J_12)[below of=J_14,fill=black] {$$};
 \node[state, minimum size=1mm] (J_13)[right of=J_12, fill=black] {$$};
  \node[state, minimum size=1mm] (J_14)[below of=J_12, fill=black] {$$};
   \node[state, minimum size=1mm] (J_15)[below of=J_13, fill=black] {$$};
  \node[state, minimum size=1mm] (J_16)[above of=J_12, fill=black] {$$};
  \node[state, minimum size=1mm] (J_17)[below right of=J_15, fill=black] {$$};
    \node[state, minimum size=0mm](L_2)[below left of=J_17, fill=white]{$$};
   \node(L_1)[below right of=J_14] { $\small{H_{9}}$};
  \path (J_12) edge (J_13);
 \path (J_12) edge (J_14);
 \path (J_13) edge (J_15);
 \path (J_14) edge (J_15);
 \path (J_12) edge (J_15);
\path (J_13) edge (J_14);
\path (J_12) edge (J_16);
\path (J_15) edge (J_17);

\node[state, minimum size=1mm] (J_18)[right of=J_13, fill=black] {$$};
 \node[state, minimum size=1mm] (J_19)[right of=J_18, fill=black] {$$};
  \node[state, minimum size=1mm] (J_20)[below of=J_18, fill=black] {$$};
   \node[state, minimum size=1mm] (J_21)[below of=J_19, fill=black] {$$};
  \node[state, minimum size=1mm] (J_22)[above right of=J_19, fill=black] {$$};
  \node[state, minimum size=1mm] (J_23)[below right of=J_21, fill=black] {$$};
 \node[state, minimum size=1mm] (J_24)[above of=J_18, fill=black] {$$};
    \node(L_1)[left of=J_23] { $\small{H_{10}}$};
  \path (J_18) edge (J_19);
 \path (J_18) edge (J_20);
 \path (J_20) edge (J_21);
 \path (J_19) edge (J_21);
 \path (J_18) edge (J_21);
\path (J_19) edge (J_20);
\path (J_19) edge (J_22);
\path (J_21) edge (J_23);
\path (J_18) edge (J_24);
\node[state, minimum size=1mm] (J_25)[right of=J_19, fill=black] {$$};
 \node[state, minimum size=1mm] (J_26)[right of=J_25, fill=black] {$$};
  \node[state, minimum size=1mm] (J_27)[below of=J_25, fill=black] {$$};
   \node[state, minimum size=1mm] (J_28)[below of=J_26, fill=black] {$$};
  \node[state, minimum size=1mm] (J_29)[above right of=J_26, fill=black] {$$};
  \node[state, minimum size=1mm] (J_30)[below right of=J_28, fill=black] {$$};
 \node[state, minimum size=1mm] (J_31)[above of=J_25, fill=black] {$$};
    \node(L_1)[left of=J_30] { $\small{H_{11}}$};
  \path (J_25) edge (J_26);
 \path (J_25) edge (J_27);
 \path (J_26) edge (J_28);
 \path (J_27) edge (J_28);
 \path (J_26) edge (J_29);
\path (J_28) edge (J_30);
\path (J_25) edge (J_28);
\path (J_31) edge (J_25);

\node[state, minimum size=1mm] (J_38)[right of=J_26, fill=black] {$$};
 \node[state, minimum size=1mm] (J_39)[right of=J_38, fill=black] {$$};
  \node[state, minimum size=1mm] (J_40)[below of=J_38, fill=black] {$$};
   \node[state, minimum size=1mm] (J_41)[below right of=J_39, fill=black] {$$};
  \node[state, minimum size=1mm] (J_42)[below right of=J_40, fill=black] {$$};
   \node(L_1)[right of=J_42] { $\small{H_{12}}$};
  \path (J_38) edge (J_39);
 \path (J_38) edge (J_40);
 \path (J_40) edge (J_42);
 \path (J_42) edge (J_41);
 \path (J_41) edge (J_39);
\path (J_40) edge (J_41);

\node[state, minimum size=1mm] (J_42)[above right of=J_41, fill=black] {$$};
 \node[state, minimum size=1mm] (J_43)[right of=J_42, fill=black] {$$};
  \node[state, minimum size=1mm] (J_44)[below of=J_42, fill=black] {$$};
   \node[state, minimum size=1mm] (J_45)[below right of=J_43, fill=black] {$$};
  \node[state, minimum size=1mm] (J_46)[below right of=J_44, fill=black] {$$};
  \node(L_1)[right of=J_46] { $\small{H_{13}}$};
  \path (J_42) edge (J_43);
 \path (J_42) edge (J_44);
 \path (J_44) edge (J_46);
 \path (J_46) edge (J_45);
 \path (J_45) edge (J_43);
\path (J_42) edge (J_45);
\path (J_42) edge (J_46);

\node[state, minimum size=1mm] (J_47)[above right of=J_45, fill=black] {$$};
 \node[state, minimum size=1mm] (J_48)[right of=J_47, fill=black] {$$};
  \node[state, minimum size=1mm] (J_49)[below of=J_47, fill=black] {$$};
   \node[state, minimum size=1mm] (J_50)[below right of=J_48, fill=black] {$$};
  \node[state, minimum size=1mm] (J_51)[below right of=J_49, fill=black] {$$};
 \node(L_1)[right of=J_51] { $\small{H_{14}}$};
  \path (J_47) edge (J_48);
 \path (J_47) edge (J_49);
 \path (J_49) edge (J_51);
 \path (J_51) edge (J_50);
 \path (J_50) edge (J_48);
\path (J_48) edge (J_49);
\path (J_47) edge (J_50);

\node[state, minimum size=1mm] (J_52)[above right of=J_50, fill=black] {$$};
 \node[state, minimum size=1mm] (J_53)[right of=J_52, fill=black] {$$};
  \node[state, minimum size=1mm] (J_54)[below of=J_52, fill=black] {$$};
   \node[state, minimum size=1mm] (J_55)[below right of=J_53, fill=black] {$$};
  \node[state, minimum size=1mm] (J_56)[below right of=J_54, fill=black] {$$};
 \node(L_1)[right of=J_56] { $\small{H_{15}}$};
  \path (J_52) edge (J_53);
 \path (J_52) edge (J_54);
 \path (J_54) edge (J_56);
 \path (J_56) edge (J_55);
 \path (J_55) edge (J_54);
\path (J_55) edge (J_52);
\path (J_54) edge (J_53);
\path (J_55) edge (J_53);

\end{tikzpicture}
\begin{tikzpicture}[node distance=0.85cm, inner sep=0pt]

\node[state, minimum size=1mm] (J_57)[fill=black] {$$};
 \node[state, minimum size=1mm] (J_58)[right of=J_57, fill=black] {$$};
  \node[state, minimum size=1mm] (J_59)[below of=J_57, fill=black] {$$};
   \node[state, minimum size=1mm] (J_60)[below right of=J_58, fill=black] {$$};
  \node[state, minimum size=1mm] (J_61)[below right of=J_59, fill=black] {$$};
 \node(L_1)[right of=J_61] { $\small{H_{16}}$};
  \path (J_57) edge (J_58);
 \path (J_57) edge (J_59);
 \path (J_59) edge (J_61);
 \path (J_61) edge (J_60);
 \path (J_60) edge (J_58);
\path (J_58) edge (J_59);
\path (J_58) edge (J_61);
\path (J_57) edge (J_60);

\node[state, minimum size=1mm] (J_62)[above right of=J_60, fill=black] {$$};
 \node[state, minimum size=1mm] (J_63)[right of=J_62, fill=black] {$$};
  \node[state, minimum size=1mm] (J_64)[below of=J_62, fill=black] {$$};
   \node[state, minimum size=1mm] (J_65)[below right of=J_63, fill=black] {$$};
  \node[state, minimum size=1mm] (J_66)[below right of=J_64, fill=black] {$$};
\node(L_1)[right of=J_66] { $\small{H_{17}}$};
  \path (J_62) edge (J_63);
 \path (J_62) edge (J_64);
 \path (J_64) edge (J_66);
 \path (J_66) edge (J_65);
 \path (J_65) edge (J_63);
\path (J_64) edge (J_63);
\path (J_64) edge (J_65);
\path (J_62) edge (J_66);
\path (J_62) edge (J_65);

\node[state, minimum size=1mm] (J_67)[above right of=J_65, fill=black] {$$};
 \node[state, minimum size=1mm] (J_68)[right of=J_67, fill=black] {$$};
  \node[state, minimum size=1mm] (J_69)[below of=J_67, fill=black] {$$};
   \node[state, minimum size=1mm] (J_70)[below right of=J_68, fill=black] {$$};
  \node[state, minimum size=1mm] (J_71)[below right of=J_69, fill=black] {$$};
\node(L_1)[right of=J_71] { $\small{H_{18}}$};
  \path (J_67) edge (J_68);
 \path (J_67) edge (J_69);
 \path (J_69) edge (J_71);
 \path (J_71) edge (J_70);
 \path (J_70) edge (J_68);
\path (J_67) edge (J_70);
\path (J_67) edge (J_71);
\path (J_68) edge (J_71);
\path (J_68) edge (J_69);
 \path (J_70) edge (J_69);
 \path (J_70) edge (J_67);
\node[state, minimum size=1mm] (J_72)[right of=J_70, fill=black] {$$};
 \node[state, minimum size=1mm] (J_73)[right of=J_72, fill=black] {$$};
  \node[state, minimum size=1mm] (J_74)[below left of=J_72, fill=black] {$$};
   \node[state, minimum size=1mm] (J_75)[below right of=J_73, fill=black] {$$};
  \node[state, minimum size=1mm] (J_76)[below right of=J_74, fill=black] {$$};
 \node[state, minimum size=1mm] (J_77)[above right of=J_73, fill=black] {$$};
 \node(L_1)[right of=J_76] { $\small{H_{19}}$};
  \path (J_72) edge (J_73);
 \path (J_72) edge (J_74);
 \path (J_74) edge (J_76);
 \path (J_76) edge (J_75);
 \path (J_75) edge (J_73);
\path (J_73) edge (J_77);
\path (J_74) edge (J_73);

\node[state, minimum size=1mm] (J_78)[above right of=J_75, fill=black] {$$};
 \node[state, minimum size=1mm] (J_79)[right of=J_78, fill=black] {$$};
  \node[state, minimum size=1mm] (J_80)[below of=J_78, fill=black] {$$};
   \node[state, minimum size=1mm] (J_81)[below right of=J_79, fill=black] {$$};
  \node[state, minimum size=1mm] (J_82)[below right of=J_80, fill=black] {$$};
 \node[state, minimum size=1mm] (J_83)[above right of=J_79, fill=black] {$$};
  \node(L_1)[right of=J_82] { $\small{H_{20}}$};
  \path (J_78) edge (J_79);
 \path (J_78) edge (J_80);
 \path (J_80) edge (J_82);
 \path (J_82) edge (J_81);
 \path (J_81) edge (J_79);
\path (J_79) edge (J_83);
\path (J_80) edge (J_81);

\node[state, minimum size=1mm] (J_84)[right of=J_79, fill=black] {$$};
 \node[state, minimum size=1mm] (J_85)[right of=J_84, fill=black] {$$};
  \node[state, minimum size=1mm] (J_86)[below of=J_84, fill=black] {$$};
   \node[state, minimum size=1mm] (J_87)[below right of=J_85, fill=black] {$$};
  \node[state, minimum size=1mm] (J_88)[below right of=J_86, fill=black] {$$};
 \node[state, minimum size=1mm] (J_89)[above right of=J_85, fill=black] {$$};
 \node[state, minimum size=1mm] (J_90)[right of=J_88, fill=black] {$$};
  \node(L_1)[below of=J_88] { $\small{H_{21}}$};
  \path (J_84) edge (J_85);
 \path (J_84) edge (J_86);
 \path (J_86) edge (J_88);
 \path (J_88) edge (J_87);
 \path (J_87) edge (J_85);
\path (J_85) edge (J_89);
\path (J_88) edge (J_90);
\path (J_85) edge (J_88);

\node[state, minimum size=1mm] (J_91)[right of=J_85, fill=black] {$$};
 \node[state, minimum size=1mm] (J_92)[right of=J_91, fill=black] {$$};
  \node[state, minimum size=1mm] (J_93)[below of=J_91, fill=black] {$$};
   \node[state, minimum size=1mm] (J_94)[below right of=J_92, fill=black] {$$};
  \node[state, minimum size=1mm] (J_95)[below right of=J_93, fill=black] {$$};
 \node[state, minimum size=1mm] (J_96)[above right of=J_92, fill=black] {$$};
 \node(L_1)[below of=J_94] { $\small{H_{22}}$};
  \path (J_91) edge (J_92);
 \path (J_91) edge (J_93);
 \path (J_93) edge (J_95);
 \path (J_95) edge (J_94);
 \path (J_92) edge (J_94);
\path (J_92) edge (J_93);
\path (J_92) edge (J_95);
\path (J_92) edge (J_96);

\end{tikzpicture}

\begin{tikzpicture}[node distance=0.85cm, inner sep=0pt]

\node[state, minimum size=1mm] (J_97)[right of=J_92, fill=black] {$$};
 \node[state, minimum size=1mm] (J_98)[right of=J_97, fill=black] {$$};
  \node[state, minimum size=1mm] (J_99)[below of=J_97, fill=black] {$$};
   \node[state, minimum size=1mm] (J_100)[below right of=J_98, fill=black] {$$};
  \node[state, minimum size=1mm] (J_101)[below right of=J_99, fill=black] {$$};
 \node[state, minimum size=1mm] (J_102)[below right of=J_101, fill=black] {$$};
\node(L_1)[right of=J_101] { $\small{H_{23}}$};
  \path (J_97) edge (J_98);
 \path (J_97) edge (J_99);
 \path (J_99) edge (J_101);
 \path (J_101) edge (J_100);
 \path (J_100) edge (J_98);
\path (J_101) edge (J_102);
\path (J_98) edge (J_99);
\path (J_98) edge (J_101);

\node[state, minimum size=1mm] (J_103)[right of=J_98, fill=black] {$$};
 \node[state, minimum size=1mm] (J_104)[right of=J_103, fill=black] {$$};
  \node[state, minimum size=1mm] (J_105)[below of=J_103, fill=black] {$$};
   \node[state, minimum size=1mm] (J_106)[below right of=J_104, fill=black] {$$};
  \node[state, minimum size=1mm] (J_107)[below right of=J_105, fill=black] {$$};
 \node[state, minimum size=1mm] (J_108)[above left of=J_103, fill=black] {$$};
\node(L_1)[below of=J_107] { $\small{H_{24}}$};
  \path (J_103) edge (J_104);
 \path (J_103) edge (J_105);
 \path (J_105) edge (J_107);
 \path (J_107) edge (J_106);
 \path (J_106) edge (J_104);
\path (J_104) edge (J_105);
\path (J_104) edge (J_107);
\path (J_103) edge (J_108);
\node[state, minimum size=1mm] (J_109)[right of=J_104, fill=black] {$$};
 \node[state, minimum size=1mm] (J_110)[right of=J_109, fill=black] {$$};
  \node[state, minimum size=1mm] (J_111)[below of=J_109, fill=black] {$$};
   \node[state, minimum size=1mm] (J_112)[below right of=J_110, fill=black] {$$};
  \node[state, minimum size=1mm] (J_113)[below right of=J_111, fill=black] {$$};
 \node[state, minimum size=1mm] (J_114)[below left of=J_111, fill=black] {$$};
 \node[state, minimum size=1mm] (J_115)[below left of=J_113, fill=black] {$$};
 \node(L_1)[below of=J_112] { $\small{H_{25}}$};
  \path (J_109) edge (J_110);
 \path (J_109) edge (J_111);
 \path (J_111) edge (J_113);
 \path (J_113) edge (J_112);
 \path (J_112) edge (J_110);
\path (J_111) edge (J_114);
\path (J_113) edge (J_115);
\path (J_110) edge (J_111);
\path (J_110) edge (J_113);

\node[state, minimum size=1mm] (J_116)[right of=J_110, fill=black] {$$};
 \node[state, minimum size=1mm] (J_117)[right of=J_116, fill=black] {$$};
  \node[state, minimum size=1mm] (J_118)[below of=J_116, fill=black] {$$};
   \node[state, minimum size=1mm] (J_119)[below right of=J_117, fill=black] {$$};
  \node[state, minimum size=1mm] (J_120)[below right of=J_118, fill=black] {$$};
 \node[state, minimum size=1mm] (J_121)[above right of=J_117, fill=black] {$$};
 \node[state, minimum size=1mm] (J_122)[below right of=J_120, fill=black] {$$};
 \node(L_1)[below of=J_120] { $\small{H_{26}}$};
  \path (J_116) edge (J_117);
 \path (J_116) edge (J_118);
 \path (J_118) edge (J_120);
 \path (J_120) edge (J_119);
 \path (J_119) edge (J_117);
\path (J_121) edge (J_117);
\path (J_120) edge (J_122);
\path (J_117) edge (J_118);
\path (J_117) edge (J_120);

\node[state, minimum size=1mm] (J_123)[right of=J_117, fill=black] {$$};
 \node[state, minimum size=1mm] (J_124)[right of=J_123, fill=black] {$$};
  \node[state, minimum size=1mm] (J_125)[below of=J_123, fill=black] {$$};
   \node[state, minimum size=1mm] (J_126)[below right of=J_124, fill=black] {$$};
  \node[state, minimum size=1mm] (J_127)[below right of=J_125, fill=black] {$$};
 \node[state, minimum size=1mm] (J_128)[above right of=J_124, fill=black] {$$};
 \node[state, minimum size=1mm] (J_129)[below left of=J_125, fill=black] {$$};
 \node[state, minimum size=1mm] (J_130)[below left of=J_127, fill=black] {$$};
  \node(L_1)[below of=J_127] { $\small{H_{27}}$};
  \path (J_123) edge (J_124);
 \path (J_123) edge (J_125);
 \path (J_125) edge (J_127);
 \path (J_127) edge (J_126);
 \path (J_126) edge (J_124);
\path (J_124) edge (J_125);
\path (J_124) edge (J_127);
\path (J_124) edge (J_128);
\path (J_125) edge (J_129);
\path (J_127) edge (J_130);

\node[state, minimum size=1mm] (J_131)[right of=J_124, fill=black] {$$};
 \node[state, minimum size=1mm] (J_132)[right of=J_131, fill=black] {$$};
  \node[state, minimum size=1mm] (J_133)[below of=J_131, fill=black] {$$};
   \node[state, minimum size=1mm] (J_134)[below right of=J_132, fill=black] {$$};
  \node[state, minimum size=1mm] (J_135)[below right of=J_133, fill=black] {$$};
 \node[state, minimum size=1mm] (J_136)[above right of=J_131, fill=black] {$$};
  \node(L_1)[below of=J_135] { $\small{H_{28}}$};
  \path (J_131) edge (J_132);
 \path (J_131) edge (J_133);
 \path (J_133) edge (J_135);
 \path (J_135) edge (J_134);
 \path (J_134) edge (J_132);
\path (J_131) edge (J_136);
\path (J_132) edge (J_133);
\path (J_131) edge (J_135);

\node[state, minimum size=1mm] (J_137)[right of=J_132, fill=black] {$$};
 \node[state, minimum size=1mm] (J_138)[right of=J_137, fill=black] {$$};
  \node[state, minimum size=1mm] (J_139)[below of=J_137, fill=black] {$$};
   \node[state, minimum size=1mm] (J_140)[below right of=J_138, fill=black] {$$};
  \node[state, minimum size=1mm] (J_141)[below right of=J_139, fill=black] {$$};
 \node[state, minimum size=1mm] (J_142)[below of=J_140, fill=black] {$$};
 \node(L_1)[below of=J_141] { $\small{H_{29}}$};
  \path (J_137) edge (J_138);
 \path (J_137) edge (J_139);
 \path (J_139) edge (J_141);
 \path (J_141) edge (J_140);
 \path (J_140) edge (J_138);
\path (J_137) edge (J_141);
\path (J_139) edge (J_138);
\path (J_140) edge (J_142);

\node[state, minimum size=1mm] (J_143)[right of=J_138, fill=black] {$$};
 \node[state, minimum size=1mm] (J_144)[right of=J_143, fill=black] {$$};
  \node[state, minimum size=1mm] (J_145)[below of=J_143, fill=black] {$$};
   \node[state, minimum size=1mm] (J_146)[below right of=J_144, fill=black] {$$};
  \node[state, minimum size=1mm] (J_147)[below right of=J_145, fill=black] {$$};
 \node[state, minimum size=1mm] (J_148)[above right of=J_144, fill=black] {$$};
\node[state, minimum size=1mm] (J_149)[below of=J_146, fill=black] {$$};
 \node(L_1)[below of=J_147] { $\small{H_{30}}$};
  \path (J_143) edge (J_144);
 \path (J_143) edge (J_145);
 \path (J_145) edge (J_147);
 \path (J_147) edge (J_146);
 \path (J_146) edge (J_144);
\path (J_144) edge (J_148);
\path (J_144) edge (J_145);
\path (J_143) edge (J_146);
\path (J_146) edge (J_149);

\end{tikzpicture}

\begin{tikzpicture}[node distance=0.85cm, inner sep=0pt]

\node[state, minimum size=1mm] (J_150)[right of=J_144, fill=black] {$$};
 \node[state, minimum size=1mm] (J_151)[right of=J_150, fill=black] {$$};
  \node[state, minimum size=1mm] (J_152)[below of=J_150, fill=black] {$$};
   \node[state, minimum size=1mm] (J_153)[below right of=J_151, fill=black] {$$};
  \node[state, minimum size=1mm] (J_154)[below right of=J_152, fill=black] {$$};
 \node[state, minimum size=1mm] (J_155)[below of=J_153, fill=black] {$$};
 \node(L_1)[below of=J_154] { $\small{H_{31}}$};
  \path (J_150) edge (J_151);
 \path (J_150) edge (J_152);
 \path (J_152) edge (J_154);
 \path (J_154) edge (J_153);
 \path (J_153) edge (J_151);
\path (J_151) edge (J_152);
\path (J_151) edge (J_154);
\path (J_153) edge (J_155);
\path (J_150) edge (J_153);

\node[state, minimum size=1mm] (J_156)[right of=J_151, fill=black] {$$};
 \node[state, minimum size=1mm] (J_157)[right of=J_156, fill=black] {$$};
  \node[state, minimum size=1mm] (J_158)[below of=J_156, fill=black] {$$};
   \node[state, minimum size=1mm] (J_159)[below right of=J_157, fill=black] {$$};
  \node[state, minimum size=1mm] (J_160)[below right of=J_158, fill=black] {$$};
 \node[state, minimum size=1mm] (J_161)[below left of=J_160, fill=black] {$$};
 \node(L_1)[below of=J_160] { $\small{H_{32}}$};
  \path (J_156) edge (J_157);
 \path (J_156) edge (J_158);
 \path (J_158) edge (J_160);
 \path (J_160) edge (J_159);
 \path (J_159) edge (J_157);
\path (J_157) edge (J_158);
\path (J_157) edge (J_160);
\path (J_156) edge (J_159);
\path (J_160) edge (J_161);

\node[state, minimum size=1mm] (J_162)[right of=J_157, fill=black] {$$};
 \node[state, minimum size=1mm] (J_163)[right of=J_162, fill=black] {$$};
  \node[state, minimum size=1mm] (J_164)[below of=J_162, fill=black] {$$};
   \node[state, minimum size=1mm] (J_165)[below right of=J_163, fill=black] {$$};
  \node[state, minimum size=1mm] (J_166)[below right of=J_164, fill=black] {$$};
 \node[state, minimum size=1mm] (J_167)[above right of=J_163, fill=black] {$$};
 \node(L_1)[below of=J_166] { $\small{H_{33}}$};
  \path (J_162) edge (J_163);
 \path (J_162) edge (J_164);
 \path (J_164) edge (J_166);
 \path (J_166) edge (J_165);
 \path (J_165) edge (J_163);
\path (J_163) edge (J_167);
\path (J_163) edge (J_164);
\path (J_163) edge (J_166);
\path (J_164) edge (J_165);

\node[state, minimum size=1mm] (J_168)[right of=J_163, fill=black] {$$};
 \node[state, minimum size=1mm] (J_169)[right of=J_168, fill=black] {$$};
  \node[state, minimum size=1mm] (J_170)[below of=J_168, fill=black] {$$};
   \node[state, minimum size=1mm] (J_171)[below right of=J_169, fill=black] {$$};
  \node[state, minimum size=1mm] (J_172)[below right of=J_170, fill=black] {$$};
 \node[state, minimum size=1mm] (J_173)[below left of=J_172, fill=black] {$$};
 \node(L_1)[below of=J_172] {$\small{H_{34}}$};
  \path (J_168) edge (J_169);
 \path (J_168) edge (J_170);
 \path (J_170) edge (J_172);
 \path (J_172) edge (J_171);
 \path (J_171) edge (J_169);
\path (J_169) edge (J_170);
\path (J_169) edge (J_172);
\path (J_170) edge (J_171);
\path (J_173) edge (J_172);

\node[state, minimum size=1mm] (J_174)[right of=J_169, fill=black] {$$};
 \node[state, minimum size=1mm] (J_175)[right of=J_174, fill=black] {$$};
  \node[state, minimum size=1mm] (J_176)[below of=J_174, fill=black] {$$};
   \node[state, minimum size=1mm] (J_177)[below right of=J_175, fill=black] {$$};
  \node[state, minimum size=1mm] (J_178)[below right of=J_176, fill=black] {$$};
 \node[state, minimum size=1mm] (J_179)[above left of=J_175, fill=black] {$$};
 \node[state, minimum size=1mm] (J_180)[below of=J_177, fill=black] {$$};
 \node(L_1)[below of=J_178] { $\small{H_{35}}$};
  \path (J_174) edge (J_175);
 \path (J_174) edge (J_176);
 \path (J_176) edge (J_178);
 \path (J_178) edge (J_177);
 \path (J_177) edge (J_175);
\path (J_175) edge (J_179);
\path (J_177) edge (J_180);
\path (J_176) edge (J_175);
\path (J_177) edge (J_176);
\path (J_175) edge (J_178);

\node[state, minimum size=1mm] (J_181)[right of=J_175, fill=black] {$$};
 \node[state, minimum size=1mm] (J_182)[right of=J_181, fill=black] {$$};
  \node[state, minimum size=1mm] (J_183)[below of=J_181, fill=black] {$$};
   \node[state, minimum size=1mm] (J_184)[below right of=J_182, fill=black] {$$};
  \node[state, minimum size=1mm] (J_185)[below right of=J_183, fill=black] {$$};
 \node[state, minimum size=1mm] (J_186)[above of=J_182, fill=black] {$$};
 \node[state, minimum size=1mm] (J_187)[below left of=J_185, fill=black] {$$};
\node(L_1)[below of=J_185] { $\small{H_{36}}$};
  \path (J_181) edge (J_182);
 \path (J_181) edge (J_183);
 \path (J_183) edge (J_185);
 \path (J_185) edge (J_184);
 \path (J_184) edge (J_182);
\path (J_182) edge (J_183);
\path (J_182) edge (J_185);
\path (J_183) edge (J_184);
\path (J_186) edge (J_182);
\path (J_185) edge (J_187);

\node[state, minimum size=1mm] (J_188)[right of=J_182, fill=black] {$$};
 \node[state, minimum size=1mm] (J_189)[right of=J_188, fill=black] {$$};
  \node[state, minimum size=1mm] (J_190)[below of=J_188, fill=black] {$$};
   \node[state, minimum size=1mm] (J_191)[below right of=J_189, fill=black] {$$};
  \node[state, minimum size=1mm] (J_192)[below right of=J_190, fill=black] {$$};
 \node[state, minimum size=1mm] (J_193)[above of=J_189, fill=black] {$$};
 \node[state, minimum size=1mm] (J_194)[below of=J_190, fill=black] {$$};
\node[state, minimum size=1mm] (J_195)[below of=J_192, fill=black] {$$};
\node(L_1)[below of=J_194] { $\small{H_{37}}$};
  \path (J_188) edge (J_189);
 \path (J_188) edge (J_190);
 \path (J_190) edge (J_192);
 \path (J_192) edge (J_191);
 \path (J_191) edge (J_189);
\path (J_190) edge (J_189);
\path (J_189) edge (J_192);
\path (J_190) edge (J_191);
\path (J_189) edge (J_193);
\path (J_190) edge (J_194);
\path (J_192) edge (J_195);

\node[state, minimum size=1mm] (J_196)[right of=J_189, fill=black] {$$};
 \node[state, minimum size=1mm] (J_197)[right of=J_196, fill=black] {$$};
  \node[state, minimum size=1mm] (J_198)[below of=J_196, fill=black] {$$};
   \node[state, minimum size=1mm] (J_199)[below right of=J_197, fill=black] {$$};
  \node[state, minimum size=1mm] (J_200)[below right of=J_198, fill=black] {$$};
\node(L_1)[below of=J_199] { $\small{H_{38}}$};
  \path (J_196) edge (J_197);
 \path (J_196) edge (J_198);
 \path (J_198) edge (J_200);
 \path (J_200) edge (J_199);
 \path (J_199) edge (J_197);
\path (J_196) edge (J_199);
\path (J_197) edge (J_198);
\path (J_197) edge (J_200);
\path (J_199) edge (J_198);

\end{tikzpicture}

\begin{tikzpicture}[node distance=0.85cm, inner sep=0pt]

\node[state, minimum size=1mm] (J_201)[right of=J_197, fill=black] {$$};
 \node[state, minimum size=1mm] (J_202)[right of=J_201, fill=black] {$$};
  \node[state, minimum size=1mm] (J_203)[below of=J_201, fill=black] {$$};
   \node[state, minimum size=1mm] (J_204)[below right of=J_202, fill=black] {$$};
  \node[state, minimum size=1mm] (J_205)[below right of=J_203, fill=black] {$$};
  \node[state, minimum size=1mm] (J_206)[above of=J_204, fill=black] {$$};
\node(L_1)[right of=J_205] { $\small{H_{39}}$};
  \path (J_201) edge (J_202);
 \path (J_201) edge (J_203);
 \path (J_203) edge (J_205);
 \path (J_205) edge (J_204);
 \path (J_204) edge (J_202);
\path (J_204) edge (J_206);
\path (J_203) edge (J_204);
\path (J_204) edge (J_201);
\path (J_202) edge (J_203);
\path (J_202) edge (J_205);

\node[state, minimum size=1mm] (J_207)[right of=J_202, fill=black] {$$};
 \node[state, minimum size=1mm] (J_208)[right of=J_207, fill=black] {$$};
  \node[state, minimum size=1mm] (J_209)[below of=J_207, fill=black] {$$};
   \node[state, minimum size=1mm] (J_210)[below right of=J_208, fill=black] {$$};
  \node[state, minimum size=1mm] (J_211)[below right of=J_209, fill=black] {$$};

\node(L_1)[below of=J_210] { $\small{H_{40}}$};
  \path (J_207) edge (J_208);
 \path (J_207) edge (J_209);
 \path (J_209) edge (J_211);
 \path (J_211) edge (J_210);
 \path (J_210) edge (J_208);
\path (J_208) edge (J_209);
\path (J_208) edge (J_210);
\path (J_210) edge (J_209);
\path (J_210) edge (J_207);
\path (J_211) edge (J_207);
\path (J_208) edge (J_211);

\end{tikzpicture}

\begin{tikzpicture}[node distance=0.85cm, inner sep=0pt]

\node[state, minimum size=1mm] (J_212)[fill=black] {$$};
 \node[state, minimum size=1mm] (J_213)[below right of=J_212, fill=black] {$$};
  \node[state, minimum size=1mm] (J_214)[below left of=J_212, fill=black] {$$};
   \node[state, minimum size=1mm] (J_215)[below of=J_213, fill=black] {$$};
  \node[state, minimum size=1mm] (J_216)[below of=J_214, fill=black] {$$};
\node[state, minimum size=1mm] (J_217)[below right of=J_214, fill=black] {$$};
\node(L_1)[below of=J_217] { $\small{H_{41}}$};
  \path (J_212) edge (J_213);
 \path (J_212) edge (J_214);
 \path (J_213) edge (J_215);
 \path (J_215) edge (J_217);
 \path (J_217) edge (J_216);
\path (J_216) edge (J_214);
\path (J_213) edge (J_214);

\node[state, minimum size=0mm] (J_1000)[right of=J_213] {$$};

\node[state, minimum size=1mm] (J_7)[above right of=J_1000,fill=black] {$$};
 \node[state, minimum size=1mm] (J_8)[below right of=J_7, fill=black] {$$};
  \node[state, minimum size=1mm] (J_9)[below left of=J_7, fill=black] {$$};
   \node[state, minimum size=1mm] (J_10)[below of=J_8, fill=black] {$$};
  \node[state, minimum size=1mm] (J_11)[below of=J_9, fill=black] {$$};
\node[state, minimum size=1mm] (J_12)[below right of=J_11, fill=black] {$$};
\node(L_1)[right of=J_12] { $\small{H_{42}}$};
  \path (J_7) edge (J_8);
 \path (J_7) edge (J_9);
 \path (J_8) edge (J_10);
 \path (J_10) edge (J_12);
 \path (J_12) edge (J_11);
\path (J_11) edge (J_9);
\path (J_8) edge (J_9);
\path (J_10) edge (J_11);

\node[state, minimum size=0mm] (J_1001)[right of=J_8] {$$};

\node[state, minimum size=1mm] (J_13)[above right of=J_1001, fill=black] {$$};
 \node[state, minimum size=1mm] (J_14)[below right of=J_13, fill=black] {$$};
  \node[state, minimum size=1mm] (J_15)[below left of=J_13, fill=black] {$$};
   \node[state, minimum size=1mm] (J_16)[below of=J_14, fill=black] {$$};
  \node[state, minimum size=1mm] (J_17)[below of=J_15, fill=black] {$$};
\node[state, minimum size=1mm] (J_18)[below right of=J_17, fill=black] {$$};
\node(L_1)[right of=J_18] { $\small{H_{43}}$};
  \path (J_13) edge (J_14);
 \path (J_13) edge (J_15);
 \path (J_15) edge (J_17);
 \path (J_16) edge (J_14);
 \path (J_16) edge (J_18);
\path (J_17) edge (J_18);
 \path (J_14) edge (J_18);
\path (J_14) edge (J_15);

\node[state, minimum size=0mm] (J_1002)[right of=J_14] {$$};

\node[state, minimum size=1mm] (J_19)[above right of=J_1002, fill=black] {$$};
 \node[state, minimum size=1mm] (J_20)[below right of=J_19, fill=black] {$$};
  \node[state, minimum size=1mm] (J_21)[below left of=J_19, fill=black] {$$};
   \node[state, minimum size=1mm] (J_22)[below of=J_20, fill=black] {$$};
  \node[state, minimum size=1mm] (J_23)[below of=J_21, fill=black] {$$};
\node[state, minimum size=1mm] (J_24)[below right of=J_23, fill=black] {$$};
\node(L_1)[right of=J_24] { $\small{H_{44}}$};
  \path (J_19) edge (J_20);
 \path (J_19) edge (J_21);
 \path (J_21) edge (J_23);
 \path (J_23) edge (J_24);
 \path (J_24) edge (J_22);
\path (J_22) edge (J_20);
\path (J_20) edge (J_21);
\path (J_23) edge (J_22);
\path (J_22) edge (J_19);

\node[state, minimum size=0mm] (J_1003)[above right of=J_20] {$$};

\node[state, minimum size=1mm] (J_25)[right of=J_1003, fill=black] {$$};
 \node[state, minimum size=1mm] (J_26)[below right of=J_25, fill=black] {$$};
  \node[state, minimum size=1mm] (J_27)[below left of=J_25, fill=black] {$$};
   \node[state, minimum size=1mm] (J_28)[below of=J_26, fill=black] {$$};
  \node[state, minimum size=1mm] (J_29)[below of=J_27, fill=black] {$$};
\node[state, minimum size=1mm] (J_30)[below right of=J_29, fill=black] {$$};
\node(L_1)[right of=J_30] { $\small{H_{45}}$};
  \path (J_25) edge (J_27);
 \path (J_25) edge (J_26);
 \path (J_27) edge (J_29);
 \path (J_26) edge (J_28);
 \path (J_29) edge (J_30);
\path (J_28) edge (J_30);
\path (J_25) edge (J_28);
\path (J_25) edge (J_29);
\path (J_28) edge (J_29);

\node[state, minimum size=0mm] (J_1004)[above right of=J_26] {$$};

\node[state, minimum size=1mm] (J_31)[right of=J_1004, fill=black] {$$};
 \node[state, minimum size=1mm] (J_32)[below right of=J_31, fill=black] {$$};
  \node[state, minimum size=1mm] (J_33)[below left of=J_31, fill=black] {$$};
   \node[state, minimum size=1mm] (J_34)[below of=J_32, fill=black] {$$};
  \node[state, minimum size=1mm] (J_35)[below of=J_33, fill=black] {$$};
\node[state, minimum size=1mm] (J_36)[below right of=J_35, fill=black] {$$};
\node(L_1)[right of=J_36] { $\small{H_{46}}$};
  \path (J_31) edge (J_32);
 \path (J_31) edge (J_33);
 \path (J_33) edge (J_35);
 \path (J_32) edge (J_34);
 \path (J_35) edge (J_36);
\path (J_36) edge (J_34);
\path (J_32) edge (J_35);
\path (J_34) edge (J_31);
\path (J_31) edge (J_35);

\node[state, minimum size=0mm] (J_1005)[above right of=J_32] {$$};

\node[state, minimum size=1mm] (J_37)[right of=J_1005, fill=black] {$$};
 \node[state, minimum size=1mm] (J_38)[below right of=J_37, fill=black] {$$};
  \node[state, minimum size=1mm] (J_39)[below left of=J_37, fill=black] {$$};
   \node[state, minimum size=1mm] (J_40)[below of=J_38, fill=black] {$$};
  \node[state, minimum size=1mm] (J_41)[below of=J_39, fill=black] {$$};
\node[state, minimum size=1mm] (J_42)[below right of=J_41, fill=black] {$$};
\node(L_1)[right of=J_42] { $\small{H_{47}}$};
  \path (J_37) edge (J_39);
 \path (J_39) edge (J_41);
 \path (J_41) edge (J_42);
 \path (J_42) edge (J_40);
 \path (J_40) edge (J_38);
\path (J_38) edge (J_37);
\path (J_38) edge (J_42);
\path (J_37) edge (J_40);
\path (J_37) edge (J_41);

\end{tikzpicture}

\begin{tikzpicture}[node distance=0.85cm, inner sep=0pt]

\node[state, minimum size=1mm] (J_43)[below of=J_217, fill=black] {$$};
 \node[state, minimum size=1mm] (J_44)[below right of=J_43, fill=black] {$$};
  \node[state, minimum size=1mm] (J_45)[below left of=J_43, fill=black] {$$};
   \node[state, minimum size=1mm] (J_46)[below of=J_44, fill=black] {$$};
  \node[state, minimum size=1mm] (J_47)[below of=J_45, fill=black] {$$};
\node[state, minimum size=1mm] (J_48)[below right of=J_47, fill=black] {$$};
\node(L_1)[right of=J_48] { $\small{H_{48}}$};
  \path (J_43) edge (J_45);
 \path (J_45) edge (J_47);
 \path (J_47) edge (J_48);
 \path (J_48) edge (J_46);
 \path (J_46) edge (J_44);
\path (J_45) edge (J_44);
\path (J_47) edge (J_46);
\path (J_45) edge (J_46);
\path (J_44) edge (J_47);
\path (J_44) edge (J_43);

\node[state, minimum size=0mm] (J_1006)[above right of=J_44] {$$};

\node[state, minimum size=1mm] (J_44)[right of=J_1006, fill=black] {$$};
 \node[state, minimum size=1mm] (J_45)[below right of=J_44, fill=black] {$$};
  \node[state, minimum size=1mm] (J_46)[below left of=J_44, fill=black] {$$};
   \node[state, minimum size=1mm] (J_47)[below of=J_45, fill=black] {$$};
  \node[state, minimum size=1mm] (J_48)[below of=J_46, fill=black] {$$};
\node[state, minimum size=1mm] (J_49)[below right of=J_48, fill=black] {$$};
\node(L_1)[right of=J_49] { $\small{H_{49}}$};
  \path (J_44) edge (J_45);
 \path (J_44) edge (J_46);
 \path (J_46) edge (J_48);
 \path (J_48) edge (J_49);
 \path (J_49) edge (J_47);
\path (J_47) edge (J_45);
\path (J_44) edge (J_47);
\path (J_44) edge (J_49);
\path (J_44) edge (J_48);
\path (J_46) edge (J_45);

\node[state, minimum size=0mm] (J_1007)[above right of=J_45] {$$};

\node[state, minimum size=1mm] (J_50)[right of=J_1007, fill=black] {$$};
 \node[state, minimum size=1mm] (J_51)[below right of=J_50, fill=black] {$$};
  \node[state, minimum size=1mm] (J_52)[below left of=J_50, fill=black] {$$};
   \node[state, minimum size=1mm] (J_53)[below of=J_51, fill=black] {$$};
  \node[state, minimum size=1mm] (J_54)[below of=J_52, fill=black] {$$};
\node[state, minimum size=1mm] (J_55)[below right of=J_54, fill=black] {$$};
\node(L_1)[right of=J_55] { $\small{H_{50}}$};
  \path (J_50) edge (J_52);
 \path (J_50) edge (J_51);
 \path (J_52) edge (J_54);
 \path (J_54) edge (J_55);
 \path (J_55) edge (J_53);
\path (J_53) edge (J_51);
\path (J_50) edge (J_54);
\path (J_50) edge (J_55);
\path (J_50) edge (J_53);
\path (J_50) edge (J_55);
\path (J_54) edge (J_53);

\node[state, minimum size=0mm] (J_1008)[above right of=J_51] {$$};

\node[state, minimum size=1mm] (J_56)[right of=J_1008, fill=black] {$$};
 \node[state, minimum size=1mm] (J_57)[below right of=J_56, fill=black] {$$};
  \node[state, minimum size=1mm] (J_58)[below left of=J_56, fill=black] {$$};
   \node[state, minimum size=1mm] (J_59)[below of=J_57, fill=black] {$$};
  \node[state, minimum size=1mm] (J_60)[below of=J_58, fill=black] {$$};
\node[state, minimum size=1mm] (J_61)[below right of=J_60, fill=black] {$$};
\node(L_1)[right of=J_61] { $\small{H_{51}}$};
  \path (J_56) edge (J_57);
 \path (J_56) edge (J_58);
 \path (J_58) edge (J_60);
 \path (J_60) edge (J_61);
 \path (J_61) edge (J_59);
\path (J_59) edge (J_57);
\path (J_56) edge (J_59);
\path (J_56) edge (J_61);
\path (J_56) edge (J_60);
\path (J_59) edge (J_58);

\node[state, minimum size=0mm] (J_1009)[above right of=J_57] {$$};

\node[state, minimum size=1mm] (J_62)[right of=J_1009, fill=black] {$$};
 \node[state, minimum size=1mm] (J_63)[below right of=J_62, fill=black] {$$};
  \node[state, minimum size=1mm] (J_64)[below left of=J_62, fill=black] {$$};
   \node[state, minimum size=1mm] (J_65)[below of=J_63, fill=black] {$$};
  \node[state, minimum size=1mm] (J_66)[below of=J_64, fill=black] {$$};
\node[state, minimum size=1mm] (J_67)[below right of=J_66, fill=black] {$$};
\node(L_1)[right of=J_67] { $\small{H_{52}}$};
  \path (J_62) edge (J_64);
 \path (J_62) edge (J_63);
 \path (J_64) edge (J_66);
 \path (J_63) edge (J_65);
 \path (J_66) edge (J_67);
\path (J_65) edge (J_67);
\path (J_63) edge (J_64);
\path (J_65) edge (J_66);
\path (J_65) edge (J_62);
\path (J_63) edge (J_67);

\node[state, minimum size=0mm] (J_1010)[above right of=J_63] {$$};

\node[state, minimum size=1mm] (J_68)[right of=J_1010, fill=black] {$$};
 \node[state, minimum size=1mm] (J_69)[below right of=J_68, fill=black] {$$};
  \node[state, minimum size=1mm] (J_70)[below left of=J_68, fill=black] {$$};
   \node[state, minimum size=1mm] (J_71)[below of=J_69, fill=black] {$$};
  \node[state, minimum size=1mm] (J_72)[below of=J_70, fill=black] {$$};
\node[state, minimum size=1mm] (J_73)[below right of=J_72, fill=black] {$$};
\node(L_1)[right of=J_73] { $\small{H_{53}}$};
  \path (J_68) edge (J_69);
 \path (J_68) edge (J_70);
 \path (J_70) edge (J_72);
 \path (J_69) edge (J_71);
 \path (J_72) edge (J_73);
\path (J_71) edge (J_73);
\path (J_68) edge (J_71);
\path (J_68) edge (J_72);
\path (J_69) edge (J_72);
\path (J_69) edge (J_73);

\node[state, minimum size=0mm] (J_1011)[above right of=J_69] {$$};

\node[state, minimum size=1mm] (J_74)[right of=J_1011, fill=black] {$$};
 \node[state, minimum size=1mm] (J_75)[below right of=J_74, fill=black] {$$};
  \node[state, minimum size=1mm] (J_76)[below left of=J_74, fill=black] {$$};
   \node[state, minimum size=1mm] (J_77)[below of=J_75, fill=black] {$$};
  \node[state, minimum size=1mm] (J_78)[below of=J_76, fill=black] {$$};
\node[state, minimum size=1mm] (J_79)[below right of=J_78, fill=black] {$$};
\node(L_1)[right of=J_79] { $\small{H_{54}}$};
  \path (J_74) edge (J_76);
 \path (J_74) edge (J_75);
 \path (J_76) edge (J_78);
 \path (J_78) edge (J_79);
 \path (J_79) edge (J_77);
\path (J_77) edge (J_75);
\path (J_74) edge (J_77);
\path (J_75) edge (J_78);
\path (J_77) edge (J_76);
\path (J_75) edge (J_79);

\end{tikzpicture}
\end{figure}
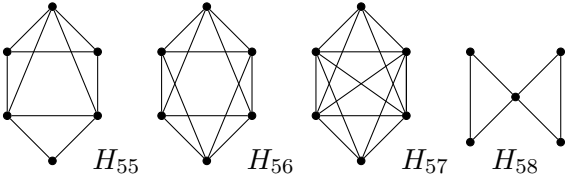
\begin{figure}[!htp]
\begin{tikzpicture}[node distance=0.85cm, inner sep=0pt]
\node[state, minimum size=1mm] (J_80)[below of=J_48,fill=black] {$$};
 \node[state, minimum size=1mm] (J_81)[below right of=J_80, fill=black] {$$};
  \node[state, minimum size=1mm] (J_82)[below left of=J_80, fill=black] {$$};
   \node[state, minimum size=1mm] (J_83)[below of=J_81, fill=black] {$$};
  \node[state, minimum size=1mm] (J_84)[below of=J_82, fill=black] {$$};
\node[state, minimum size=1mm] (J_85)[below right of=J_84, fill=black] {$$};
\node(L_1)[right of=J_85] { $\small{H_{55}}$};
  \path (J_80) edge (J_81);
 \path (J_80) edge (J_82);
 \path (J_82) edge (J_84);
 \path (J_84) edge (J_85);
 \path (J_85) edge (J_83);
\path (J_83) edge (J_81);
\path (J_81) edge (J_82);
\path (J_84) edge (J_83);
\path (J_80) edge (J_84);
\path (J_80) edge (J_83);

\node[state, minimum size=0mm] (J_1012)[above right of=J_81] {$$};

\node[state, minimum size=1mm] (J_86)[right of=J_1012,fill=black] {$$};
 \node[state, minimum size=1mm] (J_87)[below right of=J_86, fill=black] {$$};
  \node[state, minimum size=1mm] (J_88)[below left of=J_86, fill=black] {$$};
   \node[state, minimum size=1mm] (J_89)[below of=J_87, fill=black] {$$};
  \node[state, minimum size=1mm] (J_90)[below of=J_88, fill=black] {$$};
\node[state, minimum size=1mm] (J_91)[below right of=J_90, fill=black] {$$};
\node(L_1)[right of=J_91] { $\small{H_{56}}$};
  \path (J_86) edge (J_87);
 \path (J_86) edge (J_88);
 \path (J_88) edge (J_90);
 \path (J_90) edge (J_91);
 \path (J_91) edge (J_89);
\path (J_89) edge (J_87);
\path (J_88) edge (J_87);
\path (J_89) edge (J_90);
\path (J_86) edge (J_89);
\path (J_86) edge (J_90);
\path (J_88) edge (J_91);
\path (J_87) edge (J_91);

\node[state, minimum size=0mm] (J_1013)[above right of=J_87] {$$};

\node[state, minimum size=1mm] (J_92)[right of=J_1013, fill=black] {$$};
 \node[state, minimum size=1mm] (J_93)[below right of=J_92, fill=black] {$$};
  \node[state, minimum size=1mm] (J_94)[below left of=J_92, fill=black] {$$};
   \node[state, minimum size=1mm] (J_95)[below of=J_93, fill=black] {$$};
  \node[state, minimum size=1mm] (J_96)[below of=J_94, fill=black] {$$};
\node[state, minimum size=1mm] (J_97)[below right of=J_96, fill=black] {$$};
\node(L_1)[right of=J_97] { $\small{H_{57}}$};
  \path (J_92) edge (J_93);
 \path (J_92) edge (J_94);
 \path (J_94) edge (J_96);
 \path (J_96) edge (J_97);
 \path (J_97) edge (J_95);
\path (J_95) edge (J_93);
\path (J_92) edge (J_95);
\path (J_92) edge (J_96);
\path (J_93) edge (J_94);
\path (J_93) edge (J_96);
\path (J_93) edge (J_97);
\path (J_93) edge (J_95);
\path (J_94) edge (J_95);
\path (J_96) edge (J_95);
\path (J_94) edge (J_97);

\node[state, minimum size=1mm] (J_98)[right of=J_93, fill=black] {$$};
 \node[state, minimum size=1mm] (J_99)[below right of=J_98, fill=black] {$$};
  \node[state, minimum size=1mm] (J_102)[below left of=J_99, fill=black] {$$};
   \node[state, minimum size=1mm] (J_100)[above right of=J_99, fill=black] {$$};
  \node[state, minimum size=1mm] (J_101)[below right of=J_99, fill=black] {$$};
\node(L_1)[below of=J_99] { $\small{H_{58}}$};
  \path (J_99) edge (J_98);
 \path (J_99) edge (J_102);
 \path (J_99) edge (J_100);
 \path (J_99) edge (J_101);
 \path (J_98) edge (J_102);
\path (J_100) edge (J_101);
\end{tikzpicture}
\begin{center}
\caption{ Graphs in the $\Psi$ }
\label{f6}  
\end{center}
\end{figure}
\begin{theorem}
\label{the-m}
 Let $G$ be a connected graph of order $n$ and size $m$ which is neither a tree nor a unicyclic graph.
 Then $EC(G)=m$ if and only if $G\in \Psi$.
\end{theorem}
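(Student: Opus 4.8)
The plan is to reduce the condition $EC(G)=m$ to a purely local statement and then exploit the strong domination constraint it forces. First I would record that, since $\pi_1$ is the unique edge partition of the maximal order $m$ (Corollary \ref{cor6}), we have $EC(G)=m$ if and only if the singleton partition $\pi_1$ is an $ec$-partition; by Definition \ref{def2} this holds if and only if every edge $e$ is either a full edge or admits a single non-full edge $f$ with $\{e,f\}$ an edge dominating set. The decisive consequence is that $\gamma'(G)\le 2$: take any non-full edge together with its partner. Hence $G$ has an edge dominating set $D$ with $|D|\le 2$, and the at most four endpoints of $D$ form a vertex cover $S$ with $|S|\le 4$, so every edge of $G$ is incident to $S$.

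The forward direction is routine: for each $G\in\Psi$ (Figure \ref{f6}) one checks directly, edge by edge, that $\pi_1$ is an $ec$-partition, exactly as in the path and cycle examples following Definition \ref{def3}. I would organize this by the shape of the cyclic part so that the verification is uniform across the list.

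For the converse I would combine $|S|\le 4$ with the hypothesis that $G$ is neither a tree nor unicyclic, so that its cyclomatic number satisfies $m-n+1\ge 2$. Writing $I=V\setminus S$, the set $I$ is independent and every vertex of $I$ has all its neighbors in $S$; thus $G$ is completely determined by the subgraph it induces on $S$ (a subgraph of $K_4$) together with, for each nonempty $T\subseteq S$, the number of $I$-vertices whose neighborhood is exactly $T$. Vertices of $I$ sharing a neighborhood are twins, and the star-replacement convention used for $\Theta$ and $\Phi$ lets me absorb arbitrarily many degree-one twins at a fixed cover vertex into a single pendant. Since there are only finitely many neighborhood types, the classification reduces to deciding which induced cores on $S$ (those compatible with $m-n+1\ge 2$, namely supergraphs of $K_4-e$, two triangles sharing an edge or a vertex, and chorded supergraphs of $C_4$) and which multiplicity patterns of $I$-types satisfy the local condition.

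The crux, and the main obstacle, is this last enumeration. The controlling phenomenon, already visible for $K_4$, is that a pendant edge $e=aw$ dominates only edges at $a$, so its partner $f$ must single-handedly dominate every edge not incident to $a$; since $f$ has only two endpoints, it cannot reach pendant edges hanging at three different cover vertices at once, which is exactly what bounds the number of cover vertices carrying pendants and, more generally, the admissible $I$-types. I would make this quantitative by treating each candidate graph in turn: either I exhibit a partner for every edge, placing the graph in $\Psi$, or I exhibit a single edge with no possible partner, drawn as a dashed edge in the style of Figure \ref{f3}, forcing $EC(G)<m$. Carrying out this finite but lengthy case analysis over all admissible cores and attachment patterns, and checking that the survivors are precisely $H_1,\dots,H_{58}$, is where essentially all the work lies: soundness (each listed graph qualifies) follows from the forward direction, while completeness (no admissible graph is missed) is the delicate part that the dashed-edge eliminations are designed to secure.
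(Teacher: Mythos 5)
Your reduction is sound, and your route is genuinely different from the paper's. Both arguments start from the same local observation --- $EC(G)=m$ forces the singleton partition to be an $ec$-partition, so every non-full edge $e$ has a partner $f$ whose at most four endpoints meet every edge of $G$ --- but the paper uses this only to bound the length of a longest path by $6$, and then simply \emph{lists}, for each $l\in\{3,4,5,6\}$, which of $H_1,\dots,H_{58}$ occur, with no argument for completeness. You instead extract the stronger structural fact $\gamma'(G)\le 2$, hence a vertex cover $S$ with $|S|\le 4$, and organize the enumeration by the core induced on $S$ together with the multiplicities of the neighborhood types of the independent set $V\setminus S$. This is a tighter and more checkable frame: your ``edge with no possible partner'' eliminations are exactly the paper's dashed-edge device, but your twin/multiplicity bookkeeping makes the completeness question finite in a controlled way, whereas the paper leaves it entirely implicit. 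It also surfaces something the paper glosses over: graphs such as $K_{2,s}$ for $s\ge 3$ (core $|S|=2$, all $I$-vertices adjacent to both cover vertices) satisfy $EC(K_{2,s})=2s=m$ --- the paper's own Theorem \ref{the-bipart} exhibits $\pi_1$ as an $ec$-partition of $K_{2,4}$ --- yet they form an infinite family that a literal reading of the finite list $\Psi$ in Figure \ref{f6} does not accommodate; your multiplicity analysis is precisely the tool needed to detect and describe such families. The cost is the same on both sides: neither you nor the paper actually carries out the decisive finite case analysis, so both texts are proof skeletons; yours is the more systematic one to flesh out, but until the enumeration over cores and attachment patterns is executed and reconciled with $\Psi$, the characterization is not yet proved.
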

\begin{proof}
Suppose that $EC(G)=m$, and let $l$ be the length of a longest path $P$ in $G$.
We first prove that $l\le6$.

Suppose, to the contrary, that $l\ge7$, and let $P=e_1e_2\ldots e_k$, where $k\ge7$.
Since $EC(G)=m$, the singleton edge partition $X=\{\{e_1\},\{e_2\},\ldots,\{e_m\}\}$
is an $ec$-partition of $G$.

Hence, every singleton edge must form an edge coalition with another singleton edge.
In particular, there exists an edge $x$ such that $\{e_1\}\cup\{x\}$ is an edge-dominating set.

Therefore every edge of $G$ is adjacent to either $e_1$ or $x$.
Writing $e_1=ab$ and $x=zt$, and considering the neighboring edges
$e=ca$, $f=bd$, $h=dz$, and $k=tu$, we see that a path of length at least $7$ necessarily contains an edge that is adjacent to neither $e_1$ nor $x$, a contradiction. Hence $l\le6$.

We now consider the possible values of $l$.
If $l=6$, then a case-by-case analysis shows that the only connected graphs satisfying $EC(G)=m$ are
$A=\{H_{25},H_{27},H_{30},H_{35},H_{36},H_{37}\}.$

If $l=5$, then the corresponding graphs are
\[B=\{H_{6},H_{8},H_{9},H_{10},H_{11},H_{19},\ldots,H_{24}, H_{26},H_{28},H_{29},H_{31},H_{32},H_{33},H_{34},
H_{38},\ldots,H_{57}\}.\]

If $l=4$, then the corresponding graphs are
$C=\{H_{3},H_{4},H_{5},H_{7},H_{12},\ldots,H_{18},H_{58}\}.$

Finally, if $l=3$, then the only possibilities are $D=\{H_1,H_2\}.$

Consequently,  $\Psi=A\cup B\cup C\cup D,$  and therefore  $G\in\Psi$.

The converse is verified directly from the definition of an $ec$-partition.
Hence $EC(G)=m$ if and only if $G\in\Psi$.
\end{proof}
\section{Edge Coalition Graphs of Selected Graph Classes}
In this section, we characterize the edge coalition graphs of certain complete bipartite graphs, double stars, 
wheel graphs, stars, and unicyclic graphs.
\begin{theorem}
\label{the-bipart}
There exist only finitely many edge coalition graphs of the complete bipartite graphs $K_{2,s}$ with $s\ge4$, each of which is isomorphic to one of $K_{l,l'}$, $K_{l,l'}+e$ ($2\le l,l'\le s$), or $K_4$.
\end{theorem}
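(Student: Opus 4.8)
The plan is to fix the standard labelling of $K_{2,s}$: write $a_1,a_2$ for the two vertices on the small side and $b_1,\dots,b_s$ for the large side, and set $e_j=a_1b_j$ and $f_j=a_2b_j$. Since two edges are adjacent exactly when they share an endpoint, we have $e_j\sim e_{j'}$ and $f_j\sim f_{j'}$ for all $j\neq j'$, and $e_j\sim f_j$, whereas $e_j$ and $f_{j'}$ are non-adjacent when $j\neq j'$. I would first record this adjacency pattern explicitly, since every later step reads off it.

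Next I would characterize the edge-dominating sets. A short case analysis on whether a candidate set $D$ meets one or both stars shows that $D$ edge-dominates $K_{2,s}$ if and only if $D$ contains both an $e$-edge and an $f$-edge, or $D=\{e_1,\dots,e_s\}$, or $D=\{f_1,\dots,f_s\}$. Because $s\geq 3$, the graph $K_{2,s}$ has no full edge, so no singleton is edge-dominating and hence every part of an $ec$-partition $\pi$ must be a non-edge-dominating set forming a coalition with another part. The crucial structural consequence is that every part of $\pi$ is \emph{monochromatic}: it consists only of $e$-edges or only of $f$-edges. Indeed, a part meeting both stars would already be edge-dominating, and since it is not a singleton this is forbidden by Definition~\ref{def2}. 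Moreover no part can be a whole star (a whole star is edge-dominating and, for $s\geq 3$, not a singleton), so the $e$-edges are distributed among some $p$ parts and the $f$-edges among some $q$ parts, where $2\le p,q\le s$.

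I would then read the coalition adjacencies of $ECG(K_{2,s},\pi)$ directly off the characterization. An $e$-part and an $f$-part always form a coalition, since their union meets both stars and neither part is edge-dominating; thus the coalition graph contains the complete bipartite graph between the $p$ $e$-parts and the $q$ $f$-parts. Two $e$-parts form a coalition if and only if their union is the entire star $\{e_1,\dots,e_s\}$, which can happen only when $p=2$; hence the $e$-side carries a single internal edge when $p=2$ and no internal edge when $p\ge 3$, and symmetrically for the $f$-side.

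Assembling these observations yields four cases: $p,q\ge 3$ gives $K_{p,q}$; exactly one of $p,q$ equal to $2$ gives $K_{2,l'}+e$ with $3\le l'\le s$; and $p=q=2$ gives the complete bipartite graph $K_{2,2}$ together with both internal edges, that is $K_4$. Since $p$ and $q$ each range only over $\{2,\dots,s\}$ and the coalition graph depends only on the pair $(p,q)$ (each such pair being realized by splitting the two stars into $p$ and $q$ nonempty blocks), there are finitely many coalition graphs, each isomorphic to exactly one of $K_{l,l'}$, $K_{l,l'}+e$, or $K_4$. I expect the only delicate point to be the monochromaticity argument, namely correctly ruling out mixed parts and whole-star parts via the edge-domination characterization; once that is in place, the adjacency bookkeeping and the final case split are routine.
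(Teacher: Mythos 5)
Your proof is correct, and it is actually more complete than the one in the paper. The paper establishes the theorem by exhaustively listing the six $ec$-partitions of $K_{2,4}$ together with their coalition graphs, and for $s\geq 5$ only outlines the argument (``partition the edges at $a$ and the edges at $b$ separately and inspect the cases''). Your argument supplies the missing general proof: the key lemma that every part of an $ec$-partition of $K_{2,s}$ is a proper nonempty subset of one of the two stars --- which you derive from the characterization of edge-dominating sets together with the absence of full edges and Definition~\ref{def2} --- is exactly what the paper leaves implicit, and once it is in place the coalition graph depends only on the pair $(p,q)$ of block counts with $2\leq p,q\leq s$, giving $K_{p,q}$ for $p,q\geq 3$, $K_{2,q}+e$ when exactly one of $p,q$ equals $2$, and $K_4$ when $p=q=2$. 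This reproduces the paper's table for $s=4$ (e.g.\ $\pi_3\mapsto K_{2,4}+e$ corresponds to $(p,q)=(2,4)$ and $\pi_6\mapsto K_4$ to $(2,2)$) and yields finiteness immediately. What your uniform approach buys is a rigorous treatment of all $s\geq 3$ at once and an explicit reason why no other coalition graphs can arise, rather than an enumeration whose exhaustiveness the reader must take on faith; the only cosmetic point is that your edge-domination characterization should read $D\supseteq\{e_1,\dots,e_s\}$ rather than equality, though this does not affect any subsequent step.
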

\begin{proof}
We prove the result for the case $s=4$. The remaining cases follow by the same argument.
To illustrate the proof, we refer to Figure~\ref{f7}.
\begin{figure}[!htp]
\centering
 \begin{tikzpicture}[node distance=2cm, inner sep=0pt, auto]

 \node[state, minimum size=1.5mm] (J_1)[fill=black] {$$};
 \node[state, minimum size=1.5mm] (J_2)[right of=J_1, fill=black] {$$};
 \node[state, minimum size=1.5mm] (J_3)[below left of=J_1, fill=black] {$$};
  \node[state, minimum size=1.5mm] (J_4)[right of=J_3, fill=black] {$$};
  \node[state, minimum size=1.5mm] (J_5)[below right of=J_2, fill=black] {$$};
  \node[state, minimum size=1.5mm] (J_6)[left of=J_5, fill=black] {$$};

  \path (J_1) edge node{$\ a $} (J_3);
  \path (J_1) edge node{$b$} (J_4);
  \path (J_1) edge node{$\ d$} (J_5);
 \path (J_1) edge node{$\ c$}(J_6);
\path (J_2) edge node{$\ e$}(J_3);
 \path (J_2) edge node{$f$}(J_4);
  \path (J_2) edge node{$\ h$}(J_5);
 \path (J_2) edge node{$g$}(J_6);
 \end{tikzpicture}
\caption{Graph $K_{2,4}$}  
\label{f7}
\end{figure}
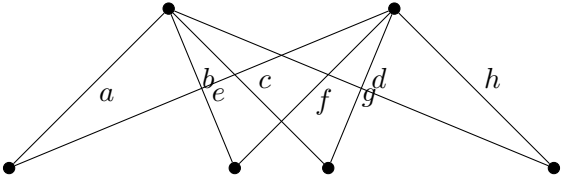

Using Figure~\ref{f7}, we obtain the following $ec$-partitions of  the graph $K_{2,4}$:
\noindent

 $\pi_1=\{\{a\},\{b\},\{c\},\{d\},\{e\},\{f\},\{g\},\{h\}\}$, $\pi_2=\{\{a,b\},\{c\},\{d\},\{e\},\{f\},\{g\},\{h\}\}$,\\
$\pi_3=\{\{a,b,c\},\{d\},\{e\},\{f\},\{g\},\{h\}\}$, $\pi_4=\{\{a,b\},\{c\},\{d\},\{e,f\},\{g\},\{h\}\}$,\\
$\pi_5=\{\{a,b\},\{c,d\},\{e,f\},\{g\},\{h\}\}$,  and $\pi_6=\{\{a,b\},\{c,d\},\{e,f\},\{g,h\}\}$.

Consequently, the corresponding edge coalition graphs are

\noindent 
$ECG(K_{2,4},\pi_1)\simeq K_{4,4}$, $ECG(K_{2,4},\pi_2)\simeq K_{3,4}$, $ECG(K_{2,4},\pi_3)\simeq K_{2,4}+e$,\\ $ECG(K_{2,4},\pi_4)\simeq K_{3,3}$, $ECG(K_{2,4},\pi_5)\simeq K_{2,3}+e$ and $ECG(K_{2,4},\pi_6)\simeq K_4$.

The same construction extends naturally to the case $s\ge5$. 

Let $X=\{a,b\}$ and $Y=\{c_1,\ldots,c_s\}$ be the two partite sets of $K_{2,s}$.
Let $Z$ denote the set of edges incident with $a$, and let $W$ denote the set of edges incident with $b$.
We first construct all non-trivial partitions of $Z$, then partition $W$ accordingly.
Combining these partitions produces all possible $ec$-partitions of $K_{2,s}$.
A systematic analysis of the resulting partitions shows that every associated edge coalition graph is isomorphic to one of
$K_{l,l'}$, $K_{l,l'}+e$, or $K_4$. This completes the proof.
\end{proof}
\begin{theorem}
\label{the-star}
For any star $K_{1,n-1}$ of order $n\ge2$ and any $ec$-partition $\pi$ of $K_{1,n-1}$,
\[ECG(K_{1,n-1},\pi)\simeq (n-1)K_1.\]
\end{theorem}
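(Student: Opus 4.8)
The plan is to use the defining feature of a star: every one of its $n-1$ edges is incident to the central vertex, so any two edges share that vertex and are therefore adjacent. Consequently each edge of $K_{1,n-1}$ is a full edge, and by the observation recorded just before Definition~\ref{def1}, each edge is by itself a singleton edge-dominating set.

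From this I would deduce the sharper structural fact that \emph{every} non-empty set of edges of $K_{1,n-1}$ is an edge-dominating set: any such set contains at least one full edge, which already dominates all remaining edges. This is the key step, and it is what makes the two alternatives in Definition~\ref{def2} collapse into one. Indeed, the second alternative, that a block $E_i$ is \emph{not} an edge-dominating set but forms an edge coalition with another block, can never occur, because there are no non-edge-dominating non-empty sets available to serve as coalition partners.

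Next I would argue that this forces $\pi$ to be the singleton partition. Since the second option is unavailable, Definition~\ref{def2} requires each block $E_i$ to be a \emph{singleton} edge-dominating set; in particular $|E_i|=1$ for every $i$, so $\pi$ consists of exactly $n-1$ singletons, in agreement with $EC(K_{1,n-1})=n-1$ from Observation~\ref{obs11}. Finally, to identify the coalition graph, recall that two blocks are adjacent in $ECG(K_{1,n-1},\pi)$ precisely when they form an edge coalition, which by Definition~\ref{def1} requires both blocks to fail to be edge-dominating sets. As every block here is a (singleton) edge-dominating set, no pair of blocks forms a coalition, so $ECG(K_{1,n-1},\pi)$ has $n-1$ vertices and no edges, i.e.\ $ECG(K_{1,n-1},\pi)\simeq (n-1)K_1$.

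The main obstacle is conceptual rather than computational: one must notice that the very hypothesis guaranteeing an abundance of singleton dominating sets simultaneously \emph{destroys} every opportunity for a coalition, so the only admissible $ec$-partition is the singleton one and its coalition graph is edgeless. A minor endpoint to check is the degenerate case $n=2$, where $K_{1,1}=K_2$ has a single edge and $(n-1)K_1=K_1$, which the argument above covers without modification.
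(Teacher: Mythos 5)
Your proof is correct and follows essentially the same route as the paper: every edge of the star is a full edge, hence a singleton edge-dominating set, which forces every $ec$-partition to be the singleton partition and leaves no pair of blocks able to form a coalition, yielding $(n-1)K_1$. Your write-up merely spells out the intermediate steps (no non-dominating subsets exist, so the coalition option in Definition~\ref{def2} is vacuous) that the paper's one-line proof leaves implicit.
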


\begin{proof}
Since every edge of a star is a full edge, every singleton edge forms an edge-dominating set.
Hence the singleton partition $\pi_1$ is the unique $ec$-partition of $K_{1,n-1}$.
Since $\pi_1$ consists of $n-1$ singleton sets and no two singleton sets form an edge coalition, the graph
$ECG(K_{1,n-1},\pi_1)$ has $n-1$ isolated vertices.
Therefore, \[ECG(K_{1,n-1},\pi_1)\simeq (n-1)K_1.\]
\end{proof}
\begin{theorem}
\label{}
If $G$ is a connected graph of order $n$ and size $m=n$, and $\pi_0$ is a singleton $ec$-partition of $G$, then
$ECG(G,\pi_0)$ is isomorphic to one of the graphs in the set
$\Delta=\{3K_1,\;3K_2,\;K_4,\;C_5,\;2K_1\bigcup K_{1,t-1}\;(t\geq3),\;K_{2,3},\;K_{t,s}\;(t,s\geq3),\;S_1\bigcup K_1,\;S_2,\;S_3,\;S_4,\;S_5\}$.
where the graphs $S_1,\ldots,S_5$ are shown in Figure~\ref{f8}.
\begin{figure}[!htbp]
 \begin{tikzpicture}[node distance=0.9cm, inner sep=0pt, auto]

 \node[state, minimum size=1mm] (J_1)[fill=black] {$$};
 \node[state, minimum size=1mm] (J_2)[right of=J_1, fill=black] {$$};
 \node[state, minimum size=1mm] (J_3)[right of=J_2, fill=black] {$$};
  \node[state, minimum size=1mm] (J_4)[below of=J_1, fill=black] {$$};
  \node[state, minimum size=1mm] (J_5)[below  of=J_2, fill=black] {$$};
  \node[state, minimum size=1mm] (J_6)[below of=J_3, fill=black] {$$};
 \node[state, minimum size=1mm] (J_7)[above right of=J_2, fill=black] {$$};
 \node[state, minimum size=1mm] (J_8)[below right of=J_5, fill=black] {$$};
 \node(L_1)[below of=J_5] { $\small{S_{1}}$};
  \path (J_1) edge (J_4);
  \path (J_1) edge (J_5);
  \path (J_1) edge (J_6);
 \path (J_2) edge(J_4);
\path (J_2) edge (J_5);
 \path (J_2) edge (J_6);
  \path (J_3) edge(J_4);
 \path (J_3) edge(J_5);
  \path (J_3) edge(J_6);
 \path (J_2) edge(J_7);
   \path (J_5) edge(J_8);
   \path (J_7) edge(J_8);
  \path (J_7) edge(J_1);
    \path (J_7) edge(J_3);
    \path (J_8) edge(J_4);
    \path (J_8) edge(J_6);

 \node[state, minimum size=0mm] (J_9)[right of=J_3, fill=white] {$$};
 \node[state, minimum size=1mm] (J_10)[right of=J_9, fill=black] {$$};
 \node[state, minimum size=1mm] (J_11)[below left of=J_10, fill=black] {$$};
  \node[state, minimum size=1mm] (J_12)[below right of=J_10, fill=black] {$$};
  \node[state, minimum size=1mm] (J_13)[above left of=J_10, fill=black] {$$};
  \node[state, minimum size=1mm] (J_14)[above right of=J_10, fill=black] {$$};
 \node[state, minimum size=1mm] (J_15)[below of=J_11, fill=black] {$$};
 \node[state, minimum size=1mm] (J_16)[below left of=J_11, fill=black] {$$};
  \node[state, minimum size=1mm] (J_17)[below right of=J_12, fill=black] {$$};
 \node[state, minimum size=1mm] (J_18)[below of=J_12, fill=black] {$$};
 \node(L_1)[below right of=J_11] { $\small{S_{2}}$};
  \path (J_10) edge (J_11);
  \path (J_10) edge (J_12);
  \path (J_11) edge (J_12);
 \path (J_13) edge(J_10);
\path (J_14) edge (J_10);
 \path (J_15) edge (J_11);
  \path (J_16) edge(J_11);
 \path (J_17) edge(J_12);
  \path (J_18) edge(J_12);

  \node[state, minimum size=0mm] (J_19)[right of=J_10, fill=white] {$$};
 \node[state, minimum size=1mm] (J_20)[right of=J_19, fill=black] {$$};
 \node[state, minimum size=1mm] (J_21)[right of=J_20, fill=black] {$$};
  \node[state, minimum size=1mm] (J_22)[right of=J_21, fill=black] {$$};
  \node[state, minimum size=1mm] (J_23)[right of=J_22, fill=black] {$$};
  \node[state, minimum size=1mm] (J_24)[below of=J_20, fill=black] {$$};
 \node[state, minimum size=1mm] (J_25)[below of=J_21, fill=black] {$$};
 \node[state, minimum size=1mm] (J_26)[below right of=J_25, fill=black] {$$};
 \node(L_1)[below of=J_25] { $\small{S_{3}}$};
  \path (J_20) edge (J_24);
  \path (J_20) edge (J_25);
  \path (J_21) edge (J_24);
 \path (J_21) edge(J_25);
\path (J_22) edge (J_24);
 \path (J_22) edge (J_25);
  \path (J_23) edge(J_25);
 \path (J_23) edge(J_26);
  \path (J_25) edge(J_26);
   \path (J_24) edge(J_26);

  \node[state, minimum size=1mm] (J_27)[right of=J_23, fill=black] {$$};
 \node[state, minimum size=1mm] (J_28)[right of=J_27, fill=black] {$$};
 \node[state, minimum size=1mm] (J_29)[right of=J_28, fill=black] {$$};
  \node[state, minimum size=1mm] (J_30)[right of=J_29, fill=black] {$$};
  \node[state, minimum size=1mm] (J_31)[right of=J_30, fill=black] {$$};
  \node[state, minimum size=1mm] (J_32)[below of=J_27, fill=black] {$$};
 \node[state, minimum size=1mm] (J_33)[below of=J_29, fill=black] {$$};
 \node[state, minimum size=1mm] (J_34)[below of=J_31, fill=black] {$$};
 \node(L_1)[below of=J_33] { $\small{S_{4}}$};
  \path (J_32) edge (J_27);
  \path (J_32) edge (J_28);
  \path (J_32) edge (J_31);
 \path (J_33) edge(J_29);
\path (J_33) edge (J_30);
 \path (J_33) edge (J_31);
  \path (J_34) edge(J_31);
  \path (J_32) edge(J_33);

 \node[state, minimum size=0mm] (J_36)[below of=J_4, fill=white] {$$};
 \node[state, minimum size=1mm] (J_37)[below of=J_36, fill=black] {$$};
  \node[state, minimum size=1mm] (J_38)[right of=J_37, fill=black] {$$};
  \node[state, minimum size=1mm] (J_39)[right of=J_38, fill=black] {$$};
  \node[state, minimum size=1mm] (J_40)[right of=J_39, fill=black] {$$};
 \node[state, minimum size=1mm] (J_41)[below of=J_37, fill=black] {$$};
 \node[state, minimum size=1mm] (J_42)[below of=J_38, fill=black] {$$};
  \node[state, minimum size=1mm] (J_43)[below of=J_39, fill=black] {$$};
 \node(L_1)[below of=J_42] { $\small{S_{5}}$};
  \path (J_37) edge (J_42);
  \path (J_38) edge (J_42);
  \path (J_39) edge (J_42);
 \path (J_40) edge(J_42);
\path (J_40) edge (J_41);
 \path (J_39) edge (J_43);
 \end{tikzpicture}
\caption{ Some graphs in the $\Delta$ }
\label{f8}   
\end{figure}
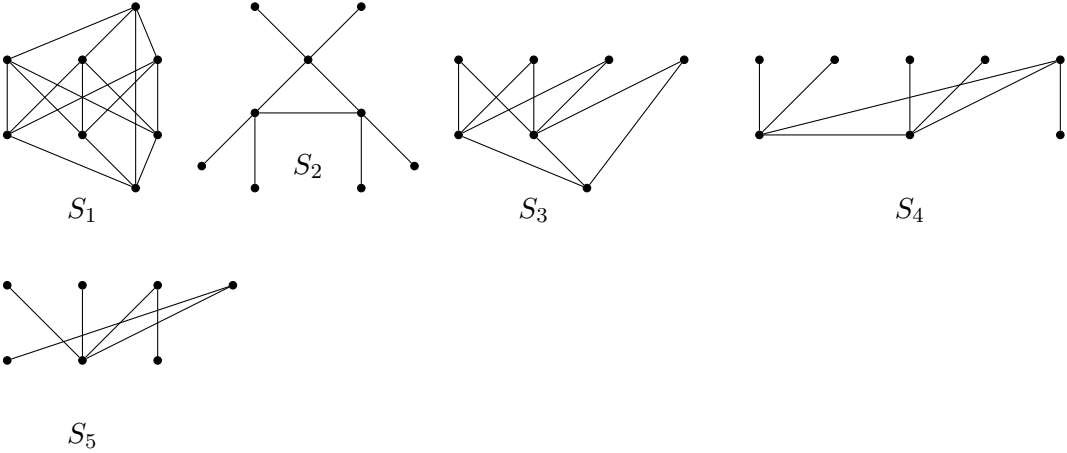
\end{theorem}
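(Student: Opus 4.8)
The plan is to reduce the problem to a finite family and then compute the coalition graph case by case. Since $G$ is connected with $m=n$ it is unicyclic, and since the singleton partition $\pi_0$ is an $ec$-partition we have $EC(G)=m=n$; hence by Theorem~\ref{the-char-n} it suffices to treat $G\in\Theta$. Recall from Definition~\ref{def4} that the vertices of $ECG(G,\pi_0)$ are the edges of $G$, two being adjacent exactly when the corresponding singleton edges form an edge coalition. The computational criterion I would use throughout is immediate from Definition~\ref{def1}: a full edge is a singleton edge-dominating set, so it can be nobody's coalition partner and appears as an isolated vertex of $ECG(G,\pi_0)$; and two non-full edges $e,f$ are adjacent in $ECG(G,\pi_0)$ if and only if $\{e,f\}$ edge-dominates $G$, i.e. $N[e]\cup N[f]=E(G)$.

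First I would dispose of the pure cycles (family (a)). In $C_3$ every edge is full, no coalition occurs, and $ECG\simeq 3K_1$. For $C_n$ with $n\in\{4,5,6\}$ a single edge $e_i$ dominates exactly the window $\{e_{i-1},e_i,e_{i+1}\}$ (indices mod $n$), so $e_i,e_j$ are adjacent in $ECG$ precisely when two such length-$3$ windows cover $E(C_n)$. This holds for every pair when $n=4$, giving $K_4$; only for antipodal edges when $n=6$, giving the perfect matching $3K_2$; and for exactly the two non-consecutive edges when $n=5$, so $ECG(C_5)$ is the complement of $L(C_5)=C_5$, which is again $C_5$.

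Next I would treat the cycle-with-stars families (b)--(e). The structural principle is that identifying the center of a star $K_{1,s}$ with a cycle vertex $v$ forces the covering condition $N[e]\cup N[f]=E(G)$ to behave predictably: covering all leaf edges at $v$ requires one partner incident with $v$, while covering the far cycle edges forces the partners to be suitably spread around the cycle. A crucial dichotomy is that the odd triangle produces full edges — in $C_3$ with a star at $v$ the two triangle edges at $v$ become full and hence isolated in $ECG$ — whereas $C_4$ and $C_5$ produce none. Carrying this out: $C_3$ with one star gives $2K_1\cup K_{1,t-1}$ (the opposite triangle edge becomes the hub coalescing with each leaf edge); $C_3$ with two or three stars, or with a double star, gives a complete bipartite pattern $K_{l,l'}$ (possibly with an isolated full edge $S_1\cup K_1$), or one of $S_2,\dots,S_5$; $C_4$ with one or two stars gives $K_{2,3}$ and further $S_i$; and $C_5$ with a star gives the remaining $S_i$. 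In each instance I would read off the adjacency pattern and match it to a member of $\Delta$.

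The main obstacle is the volume and accuracy of this bookkeeping: for every graph type one must determine, for each pair of non-full edges, whether the two closed edge-neighborhoods together exhaust $E(G)$, and then recognize the resulting pattern as exactly one of $K_{2,3}$, $K_{t,s}$, or the specific small graphs $S_1,\dots,S_5$. The cycle cases and the single-star $C_3$ case are clean and parametrize neatly into $3K_1,3K_2,K_4,C_5$ and $2K_1\cup K_{1,t-1}$; the delicate part is verifying that the $C_4$- and $C_5$-based graphs carrying one or two attached stars realize precisely the graphs $S_i$ together with $K_{2,3}$ and $K_{t,s}$, and that no coalition graph outside $\Delta$ can arise. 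Establishing this completeness, rather than any single coalition computation, is where the real effort lies.
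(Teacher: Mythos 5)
Your proposal is correct and follows essentially the same route as the paper: reduce to the family $\Theta$ by observing that $G$ is unicyclic with $EC(G)=m=n$ and invoking Theorem~\ref{the-char-n}, then determine $ECG(G,\pi_0)$ case by case using the criterion that full edges are isolated and two non-full edges are adjacent exactly when their closed edge-neighborhoods cover $E(G)$. The only difference is one of detail: the paper compresses the entire case analysis into the phrase ``with simple investigation,'' whereas you actually carry out the cycle cases and sketch the star-attached ones, which is the part where all the real work (and risk of bookkeeping error) lies.
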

\begin{proof}
Since the order and size of the connected graph $G$ are equal, the graph $G$ is unicyclic.

Moreover, since $\pi_0$ is the singleton $ec$-partition of $G$,
From the proof of Theorem~\ref{the-char-n}, it follows that,
 if $l$ denotes the length of the longest path in the graph, then $l\leq6$.
Thus, the graph $G$ satisfying these conditions belongs to the family of graphs illustrated in Figure~\ref{f2},
denoted by $\Theta$, where\\
$\Theta=\{G_{1}, G_{2}, G_{3}, G_{4}, G_{5}, G_{6}, G_{7}, G_{8}, G_{9}, G_{10}, G_{11}, G_{12}\}$.
Since $G$ belongs to the family $\Theta$, it suffices to examine each graph in $\Theta$ individually.
In every case, the corresponding edge coalition graph is isomorphic to one of the graphs in $\Delta$.
\end{proof}
A graph $G$ is called a self-edge coalition graph if 
$G$ is isomorphic to $ECG(G,\pi_0)$ for the singleton $ec$-partition $\pi_0$ of $G$.
The previous results immediately yield the following characterization of self-edge coalition graphs.
\begin{theorem}
Let $G$ be a connected graph of order $n$ and size $m$. Then $G$ is a self-edge coalition graph if and only if
$G\in\{G_{3}, G_{7}\}$.
\end{theorem}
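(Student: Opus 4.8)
The plan is to reduce the statement to a finite check over the twelve unicyclic graphs in $\Theta$, using the two preceding theorems, and then to single out the fixed points of the map $G\mapsto ECG(G,\pi_0)$.

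First I would pin down the two numerical constraints forced by the definition. Since $\pi_0$ is the singleton partition, it has exactly $m$ cells, so $ECG(G,\pi_0)$ has exactly $m$ vertices; as $G$ has $n$ vertices, the isomorphism $G\simeq ECG(G,\pi_0)$ forces $n=m$, and because $G$ is connected this means $G$ is unicyclic. Moreover, for $\pi_0$ to be an $ec$-partition at all one needs $EC(G)=m$, so in fact $EC(G)=n=m$. By Theorem~\ref{the-char-n} these conditions place $G$ in the finite family $\Theta=\{G_1,\dots,G_{12}\}$, and by the preceding theorem $ECG(G,\pi_0)$ lies in $\Delta$. The sufficiency direction is then immediate once the two candidates are exhibited: for $G_3\simeq C_5$ two single edges form a coalition exactly when they are non-adjacent (lying at distance two on the cycle), and this distance-two relation on $C_5$ is again a $C_5$, so $ECG(G_3,\pi_0)\simeq G_3$; for $G_7$ (a triangle with one pendant at each vertex) the coalitions are the three pairs of triangle edges together with each triangle edge paired with the pendant at the opposite vertex, which reassembles a triangle with a pendant at each vertex, so $ECG(G_7,\pi_0)\simeq G_7$.

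For necessity I would run through the remaining ten members of $\Theta$ and rule each out. A cheap and decisive filter is the edge count: $G_i$ and $ECG(G_i,\pi_0)$ must have the same number of edges, that is, the number of coalition pairs of single edges must equal $m$. Counting coalition pairs eliminates the cycles $G_1,G_2,G_4$ (whose coalition graphs $3K_1$, $K_4$, $3K_2$ have $0,6,3$ edges against $m=3,4,6$), the graphs $G_5,G_6$ carrying full edges (only $1$ and $2$ coalition pairs), and $G_8,G_9,G_{11},G_{12}$ by the analogous tally. Each computation is routine: determine which single edges are edge-dominating (the full edges), then test, for every pair of non-dominating edges, whether the union of their closed edge-neighborhoods is all of $E(G)$.

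The hard part is that the edge-count filter is not by itself conclusive: $G_{10}$ ($C_4$ with a pendant at two adjacent cycle vertices) also has exactly $m=6$ coalition pairs, so it survives the count and must be excluded by a finer invariant. Here I would compare degree sequences: $G_{10}$ has degree sequence $(3,3,2,2,1,1)$, whereas a direct computation shows that $ECG(G_{10},\pi_0)$ has degree sequence $(3,3,3,1,1,1)$ (indeed $ECG(G_{10},\pi_0)\simeq G_7$), so $G_{10}\not\simeq ECG(G_{10},\pi_0)$. This borderline case is the only place where the argument needs more than edge counting; once it is dispatched, $G_3$ and $G_7$ remain as the unique self-edge coalition graphs.
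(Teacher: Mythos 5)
Your proof is correct and follows essentially the same route as the paper: force $n=m$ from the isomorphism with $ECG(G,\pi_0)$, invoke the characterization of unicyclic graphs with $EC(G)=m$ to reduce to the finite family $\Theta$, and then inspect the twelve candidates. You actually carry out the inspection that the paper dismisses as a ``straightforward inspection'' --- including the one genuinely borderline case $G_{10}$, whose coalition graph is $G_7$ and so survives the edge count but not the degree-sequence comparison --- and you correctly work inside $\Theta$, whereas the paper's text erroneously refers to the family $\Psi$ at that step.
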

\begin{proof}
If $G\cong G_{3}$ or $G\cong G_{7}$, then it is straightforward to verify that $G$ is a self-edge coalition graph.
Conversely, suppose that $G$ is a self-edge coalition graph.
Since $G\cong ECG(G,\pi_0)$, the graph $ECG(G,\pi_0)$ has exactly $m$ vertices, where $m=|E(G)|$.
Hence $|V(G)|=|E(G)|$, that is, $n=m$.
Thus, from the proof of the previous theorem, only the graphs in the family $\Theta$ satisfy this condition.
However, through a straightforward inspection, we observe that only the two graphs $G_{3}$ and $G_{7}$ are self-edge coalition graphs.
\end{proof}
%
\section{Open problems}
In this paper, we investigated the concept of edge coalitions in graphs.
We studied edge coalition partitions and established several bounds on the edge coalition
number of graphs.
We conclude the paper with the following open problems arising from this research.

\begin{enumerate}
 \item
Characterize the edge coalition graphs of (i) paths,(ii) cycles, and(iii) trees.
\item 
  A graph $G$ is called a singleton edge coalition graph if its singleton edge partition forms an $ec$-partition.
Characterize all singleton edge coalition graphs.
   \item Determine the computational complexity of deciding whether a graph admits an $ec$-partition of a prescribed order.
\end{enumerate}
\section{Acknowledgements}
The authors are grateful to Professor Doost Ali Mojdeh for introducing the research problem and for his valuable discussions.

\end{document}